\newtheorem{thm}{\textsc{Theorem}}
\newtheorem*{thm*}{\textsc{Theorem}}
\newtheorem{cor}[thm]{\textsc{Corollary}}
\newtheorem{lem}[thm]{\textsc{Lemma}}
\newtheorem{prop}[thm]{\textsc{Proposition}}
\newtheorem*{prop*}{Proposition}
\newtheorem*{lem*}{\textsc{Lemma}}
\newtheorem*{rem*}{\textsc{Remark}}
\newtheorem*{exa*}{Example}
\theoremstyle{definition}
\newtheorem*{defn}{\textsc{Definition}}
\theoremstyle{remark}
\newtheorem*{remark}{\textsc{Remark}}
\DeclareMathOperator{\Hh}{\mathrm{H}}
\DeclareMathOperator{\Bb}{\mathrm{B}}
\DeclareMathOperator{\Aa}{\mathrm{A}}
\DeclareMathOperator{\id}{Id}
\DeclareMathOperator{\K}{\mathcal{K}}
\newcommand{\fact}{Lemma $2$}
\newcommand{\ev}{\mathrm{ev}}
\newcommand{\sumrep}[1]{\underset{v\lhd^{#1} w}{\text{\Large{$\oplus$}}}}
\newcommand{\sumr}{\text{\Large{$\oplus$}}}
\newcommand{\elll}{\mathcal{l}}
\newcommand{\G}{\mathrm{G}}
\newcommand{\ddef}{\mathrm{def}}
\newcommand{\im}{\mathrm{Im}}\usepackage[bb=boondox,bbscaled=.95,cal=boondoxo]{mathalfa}
\begin{document}

\title{$ q$-Independence of the Jimbo-Drinfeld Quantization
}


\author{Olof Giselsson}
\date{}
\maketitle

\begin{abstract}
Let $\mathrm G$ be a connected semi-simple compact Lie group and for $0<q<1$, let $(\mathbb{C}[\mathrm{G]_q}],\Delta_q)$ be the Jimbo-Drinfeld $q$-deformation of $\mathrm G$.  We show that the $C^*$-completions of $\mathrm{C}[\mathrm{G]_q}$ are isomorphic as $C^{*}$-algebras for all values of $q$. 
Moreover, these isomorphisms are equivariant with respect to the right-actions of the maximal torus.
\end{abstract}

\section{\bf\textsc{Introduction}}
The quantized universal enveloping algebra $\mathrm{U_{q}}(\mathfrak{g})$ of a semi-simple Lie algebra $\mathfrak g$ was introduced by Drinfeld and Jimbo in the mid-80's~\cite{dr,jimbo}.                     
In~\cite{dr2}, Drinfeld also introduced their dual objects, deformations $\mathbb{C}[\mathrm{G]_{q}}$ of the Hopf algebra of regular functions on a semi-simple Lie group $\mathrm G.$ Moreover, when $\mathrm G$ is compact, the algebra $\mathbb{C}[\mathrm{G]_{q}}$ can be given the structure of a Hopf $*$-algebra. In this case one can see that the enveloping $C^{*}$-algebra of $\mathbb{C}[\mathrm{G]_{q}}$ exists, giving a natural $q$-analogue $C(\mathrm{G)_{q}}$ of the algebra of continuous functions on $\mathrm{G}.$ The analytic approach to quantum groups was initially proposed by Woronowicz~\cite{wz1}. In the 90's Soibelman gave a complete classification of the irreducible $*$-representations of $\mathbb{C}[\mathrm{G]_{q}}$. These were shown to be in one-to-one correspondence with the symplectic leaves of $\mathrm{G}$ coming from the Poisson structure on $C(\mathrm G)$ determined by the quantization when $q\to 1$. However, it was not clear how the $C^{*}$-algebraic structure of $C(\mathrm{G)_{q}}$ was depending on the parameter $q.$ In fact, several evidence pointed towards that the structure was actually independent of it.
In the special case of $\mathrm{SU}_{2},$ it was observed (see~\cite{wz1}) that the $C^{*}$-algebras $C(\mathrm{SU_{2}})_{q},$ $q\in (0,1)$ are all isomorphic. In the mid 90's, G. Nagy showed in~\cite{gnagy} that the same holds for $C(\mathrm{SU_{3})_{q}}$. Moreover, it was also shown by Nagy (in~\cite{gnagy1}) that $C(\mathrm{SU_{n})_{q}}$ is KK-equivalent to $C(\mathrm{SU_{n})_{s}},$ for all $n\in \mathbb{N}$ and all $q,s\in (0,1).$ This was extended by Neshveyev-Tuset in~\cite{nt} to yield a KK-equivalence between $C(\mathrm{G)_{q}}$ and $C(\mathrm{G)_{s}}$ for any compact simply connected semi-simple Lie group G. In this paper, we show that some of the ideas that underpin Nagy's proof of the $q$-independence of $C(\mathrm{SU_{3})_{q}}$ can be extended to give the following result: for a fixed symplectic leaf $U\subseteq \mathrm{G},$ with corresponding $*$-representations $\pi^{q}$ of $\mathbb{C}[\mathrm{G]_{q}},$ we have an isomorphism $\overline{\im \pi^{q}}\cong \overline{\im\pi^{s}}$ for all $q,s\in (0,1).$ Using this, we prove that $C(\mathrm{G)_{q}}\cong C(\mathrm{G)_{s}},$ thus showing that these non-isomorphic compact quantum groups are all isomorphic as $C^{*}$-algebras. This confirms the conjecture made in~\cite{gnagy}.
\\
   
The paper is organized as follows. We finish this section by giving some geometric intuitions underlying the proof and describing the idea of the proof using this geometric picture. We then present and prove the lifting theorem used in the proof of the main result. Section $2$ goes through the formal definitions of $\mathbb{C}[\mathrm{G]_{q}}$ as well as its representation theory. In Section $3,$ we prove some more specific results regarding representations and how these depend on the $q$ parameter. In Section $4,$ we state and prove the main result.

\subsection{\bf\textsc{Outline of the Proof}}
  
To explain the main ideas of the proof, it is worth to start by considering the case of $\mathrm{G}=\mathrm{SU_{3}},$ previously covered by Nagy. There is an irreducible $*$-representation $\pi^{q}:\mathbb{C}[\mathrm{SU_{3}]_{q}}\rightarrow \mathcal{B}(\ell^{2}(\mathbb{Z}_{+})^{\otimes 3})$ that, in Nagy's words, corresponds to the "big symplectic leaf" in $\mathrm{SU_{3}}.$ However, there is an inherent problem when trying to determine if $\overline{\im \pi^{q}}\cong \overline{\im \pi^{s}}$ for different $q,s\in (0,1).$ As $\pi^{q}$ can be seen to vary (in a certain sense) continuously on $q,$ one intuitive approach could be to let $q\to 0,$ and then find some natural set of generators such that an isomorphic set of generators can be found in $\overline{\im \pi^{q}}$ for each $q\in (0,1).$ This method works, for example, in the case of $C(\mathrm{SU_{2})_{q}}.$ However, for $\mathbb{C}[\mathrm{SU_{3}]_{q}},$ there seems to be no simple way of taking the limit $q\to 0.$ As it sits, the image of $\pi^{q}$ is simply too "twisted" to allow passing to any limit. 
\\

These problems are resolved in the following way: We observe that $\overline{\im \pi^{q}},$ for all $q\in (0,1),$ contains the compact operators $\K\subseteq \mathcal{B}(\ell^{2}(\mathbb{Z}_{+})^{\otimes 3}),$ and moreover, the intersection $\im \pi^{q}\cap\K$ is non-trivial. The compact operators form a minimal ideal in $\overline{\im \pi^{q}},$ in the sense that it is contained in any other ideal. We now consider the composition
$$
\mathbb{C}[\mathrm{SU_{3}]_{q}}\overset{\pi^{q}}\longrightarrow \mathcal{B}(\ell^{2}(\mathbb{Z}_{+})^{\otimes 3})\overset{p}\longrightarrow \mathcal{B}(\ell^{2}(\mathbb{Z}_{+})^{\otimes 3})/\K\cong \mathcal{Q}(\ell^{2}(\mathbb{Z}_{+})^{\otimes 3}),
$$
where $p$ is the quotient map $x\mapsto x+\K,$ and we then proceed by analyzing $p\circ \pi^{q}.$ It is clear that $\pi^{q}$ can not be a direct summand in $p\circ \pi^{q},$ as the elements mapped by $\pi^{q}$ into $\im \pi^{q}\cap\K$ is now mapped into zero. It turns out that there are two Hilbert spaces $\Hh_{1},$ $\Hh_{2},$ such that for every $q\in (0,1),$ we have $*$-representations 
$$\begin{array}{ccc}\mathbb{C}[\mathrm{SU_{3}]_{q}}\overset{\Pi_{1}^{q}}\longrightarrow \mathcal{B}(\Hh_{1}), &\mathbb{C}[\mathrm{SU_{3}]_{q}}\overset{\Pi_{2}^{q}}\longrightarrow \mathcal{B}(\Hh_{2})\end{array}$$
and an isomorphism $\varphi_{q}:\overline{\im \pi^{q}}/\K\to \overline{\im (\Pi_{1}^{q}\oplus \Pi_{2}^{q})},$ such that for every $a\in \mathbb{C}[\mathrm{SU_{3}]_{q}},$ we have $\varphi_{q}(p\circ\pi^{q}(a))=(\Pi_{1}^{q}\oplus\Pi_{2}^{q})(a).$ Moreover, in this case, one can show that actually we have $\overline{\im (\Pi_{1}^{q}\oplus \Pi_{2}^{q})}=\overline{\im (\Pi_{1}^{s}\oplus \Pi_{2}^{s})}$ for all $q,s\in (0,1)$ as subspaces of $\mathcal{B}(\Hh_{1})\oplus \mathcal{B}(\Hh_{2}).$ Thus, by quoting out the compact operators, we have successfully "untwisted" the $*$-representation $\pi^{q}.$ Letting $M=\overline{\im (\Pi_{1}^{q}\oplus \Pi_{2}^{q})},$ one then shows that the injective homomorphisms $\varphi_{q}^{-1}:M\to \mathcal{Q}(\ell^{2}(\mathbb{Z}_{+})^{\otimes 3})$ varies norm-continuously on $q,$ meaning that, for a fixed $x\in M,$ the map $q\in (0,1)\to \varphi_{q}^{-1}(x)$ is a continuous function of $q.$ We then get an isomorphism \begin{equation}\label{first}\begin{array}{ccc}p^{-1}(\varphi^{-1}_{q}(M))\cong p^{-1}(\varphi^{-1}_{s}(M)), & q,s\in (0,1)\end{array}\end{equation}
using the lifting result (Lemma~\ref{gnagy} below). As $\K\subseteq \overline{\im \pi^{q}},$ it follows that $p^{-1}(\varphi^{-1}_{q}(M))=\overline{\im \pi^{q}}$ and hence we have established an isomorphism for different $q.$ One needs further argument to conclude that actually $C(\mathrm{SU_{3})_{q}}\cong C(\mathrm{SU_{3})_{s}},$ but by proving~\eqref{first}, the main effort is done. The proof below essentially systematizes this line of argument in a way that makes it also work for a general $\mathrm G$, by quoting out ideals to "untwist" an irreducible $*$-representation $\pi^{q}$ of $C(\mathrm{G})_{q}$ further and further, until it is clear that the images of the resulting $*$-representations are independent of $q.$ Then one uses inductive arguments to check that also $\overline{\im\pi^{q}}\cong \overline{\im\pi^{s}}$ for $q,s\in (0,1).$
\\

One can give a quite clear geometric heuristic of this, using the one-to-one correspondence between irreducible $*$-representations of $\mathbb{C}[\mathrm{G]_{q}}$ and symplectic leaves in $\mathrm G$ coming from the corresponding Poisson structure on $C(\mathrm G)$ (see~\cite{ks}). Recall that $\mathrm G$ can be decomposed into a disjoint union of symplectic leaves and that each leaf is an even-dimensional sub-manifold of $\mathrm G$. Let $U$ be a 2m-dimensional symplectic leaf of $\mathrm G,$ corresponding to a $*$-representations $\pi^{q}$ of $\mathbb{C}[\mathrm{G]_{q}},$ and let $C_{0}(U)$ be the ideal of $C(\overline{U})$ of all continuous functions vanishing on $\overline{U}\backslash U.$ Thus quoting out this ideal gives a homomorphism $C(\overline{U})\to C(\overline{U}\backslash U).$ It turns out that $\overline{U}\backslash U$ can be written as a disjoint union $\cup_{j} U_{j},$ of symplectic leaves of dimension strictly less than $2m,$ and that the leaves in this union of dimension $<2m-2$ are contained in the closures of the leaves of dimension $2m-2.$ In general, we can write $\overline{U}$ as a disjoint union of symplectic leaves
 $$\overline{U}=U\cup \left(\cup_{j} U_{j}^{(m-1)}\right)\cup \left(\cup_{j} U_{j}^{(m-2)}\right)\cup\cdots \cup \left(\cup_{j} U_{j}^{(0)}\right)$$ such that each $U_{j}^{(k)}$ is a symplectic leaf of dimension $2k$ and 

\begin{equation}\label{symplec}
\begin{array}{ccc}
\cup_{j} \overline{U}_{j}^{(k)}=\left(\cup_{j} U_{j}^{(k)}\right)\cup\cdots \cup \left(\cup_{j} U_{j}^{(0)}\right).
\end{array}
\end{equation}
This shows that we can make a sequence of homomorphisms
\begin{equation}\label{keke}
C(\overline{U})\longrightarrow \prod_{j} C(\overline{U}_{j}^{(m-1)})\longrightarrow\dots \longrightarrow \prod_{j} C(\overline{U}_{j}^{(1)})\longrightarrow \prod_{j} C(\overline{U}_{j}^{(0)})
\end{equation}
such that on each step, the homomorphism $\prod_{j} C(\overline{U}_{j}^{(k)})\to \prod_{j} C(\overline{U}_{j}^{(k-1)})$ has kernel $\prod_{j} C_{0}(U_{j}^{(k)}).$
Let us explain how a $q$-analogue of~\eqref{keke} is used. \begin{remark}For several reasons, the notations used here will differ somewhat from the ones used later in the text.\end{remark} Let $U$ and $U^{(k)}_{j}$ be as above. We can think of $\overline{\im\pi^{q}}$ and the ideal $\K\subseteq \overline{\im\pi^{q}}$ as $q$-analogs of $C(\overline{U})$ and $C_{0}(U),$ denoted by $C(\overline{U})_{q}$ and $C_{0}(U)_{q}$ respectively. There is then a sequence of homomorphisms
\begin{equation}\label{keke1}
C(\overline{U})_{q}\overset{\partial_{m}^{q}}\longrightarrow \prod_{j} C(\overline{U}_{j}^{(m-1)})_{q}\overset{\partial_{m-1}^{q}}\longrightarrow\dots \overset{\partial_{2}^{q}}\longrightarrow \prod_{j} C(\overline{U}_{j}^{(1)})_{q}\overset{\partial_{1}^{q}}\longrightarrow \prod_{j} C(\overline{U}_{j}^{(0)})_{q}
\end{equation}
such that on each step, the homomorphism $\partial_{k}:\prod_{j} C(\overline{U}_{j}^{(k)})_{q}\to \prod_{j} C(\overline{U}_{j}^{(k-1)})_{q}$ has kernel equal to $\prod_{j} C_{0}(U_{j}^{(k)})_{q}.$ Let us denote by $C(\partial^{(k)}\overline{U})_{q},$ $k=0,\dots,m-1,$ the image of $C(\overline{U})_{q}$ in $\prod_{j} C(\overline{U}_{j}^{(k)})_{q}$ via the composition of homomorphisms in~\eqref{keke1}. The idea is to proceed by induction on the dimensions of the symplectic leaves. In the case of zero-dimensional leaves, the corresponding $*$-representations are one-dimensional (maps to $\mathbb{C}$) and hence trivially $q$-independent. For higher dimensional leaves, we can use the induction hypothesis to connect the lower dimensional leaves for different $q,s\in (0,1)$
\begin{equation}\label{keke2}
\begin{xy}\xymatrixrowsep{3pc}\xymatrix{
C(\overline{U})_{q}\ar@{-->}[d] \ar[r]^-{\partial_{m}^{q}} &  \prod_{j} C(\overline{U}_{j}^{(m-1)})_{q} \ar[d]_{\Gamma^{s,q}_{m-1}}  \ar[r]^-{\partial_{m-1}^{q}}  & \cdots  \ar[r]^-{\partial_{2}^{q}} &\prod_{j} C(\overline{U}_{j}^{(1)})_{q}\ar[r]^-{\partial_{1}^{q}}\ar[d]_{\Gamma^{s,q}_{1}}  &\prod_{j} C(\overline{U}_{j}^{(0)})_{q}\ar[d]_{\Gamma^{s,q}_{0}} \\
C(\overline{U})_{s} \ar[r]^-{\partial_{m}^{s}} &  \prod_{j} C(\overline{U}_{j}^{(m-1)})_{s}  \ar[r]^-{\partial_{m-1}^{s}}  & \cdots  \ar[r]^-{\partial_{2}^{s}} &\prod_{j} C(\overline{U}_{j}^{(1)})_{s}\ar[r]^-{\partial_{1}^{s}} &\prod_{j} C(\overline{U}_{j}^{(0)})_{s}\\
}\end{xy}
\end{equation}

Moreover, this can be done in a way such that the diagram~\eqref{keke2} is commutative. The aim is then to construct a dotted arrow from $C(\overline{U})_{q}$ to $C(\overline{U})_{s}$ that makes the diagram commutative. The main obstacle to do this is to check that $C(\partial^{(m-1)}\overline{U})_{q}$ is mapped by $\Gamma^{s,q}_{m-1}$ to $C(\partial^{(m-1)}\overline{U})_{s}.$ In order to prove this, one shows the following
\begin{enumerate}[(i)]
\item for $k=0,\dots, m-1$ the intersection of $C(\partial^{(k)}\overline{U})_{q}$ with $\prod C_{0}(U_{j}^{(k)})_{q}$ is mapped by $\Gamma^{s,q}_{k}$ to the intersection of $C(\partial^{(k)}\overline{U})_{s}$ with $\prod C_{0}(U_{j}^{(k)})_{s},$ 
\item
the $C^{*}$-algebras $C(\partial^{(0)}\overline{U})_{q}$ are commutative and isomorphic for all $q\in (0,1),$ via $\Gamma^{s,q}_{0}$,
\item 
for $k=1,\dots, m-1,$ there is an approximate unit $\{u_{q,i}^{(k)}\}_{i=1}^{\infty}$ for $\prod_{j} C_{0}(U_{j}^{(k)})_{q}$ such that $\{u_{q,i}^{(k)}\}_{i=1}^{\infty}\subseteq C(\partial^{(k)}\overline{U})_{q}$ and $\Gamma_{k}^{s,q}(u_{q,i}^{(k)})=u_{s,i}^{(k)}$ for all $i\in \mathbb{N}.$
 \end{enumerate}
\begin{remark}In the actual proof below, we do not really use $(iii),$ since by the way the arguments are constructed there, explicitly stating this point becomes unnecessary.\end{remark}
From the commutivity of the square

\begin{equation}\label{comsq}
\begin{xy}\xymatrix{
\prod_{j} C(\overline{U}_{j}^{(k)})_{q}\ar[r]^-{\partial_{k}^{q}}\ar[d]_{\Gamma^{s,q}_{k}} & \prod_{j} C(\overline{U}_{j}^{(k-1)})_{q}\ar[d]_{\Gamma^{s,q}_{k-1}}\\
\prod_{j} C(\overline{U}_{j}^{(k)})_{s} \ar[r]^-{\partial_{k}^{s}} &\prod_{j} C(\overline{U}_{j}^{(k-1)})_{s}
}\end{xy}
\end{equation}
it follows that if $\Gamma^{s,q}_{k-1}$ restricts to a $*$-isomorphism from $C(\partial^{(k-1)}\overline{U})_{q}$ to $ C(\partial^{(k-1)}\overline{U})_{s},$ then as for any $x\in C(\partial^{(k)}\overline{U})_{q},$ we have $$\partial_{k}^{s}(\Gamma^{s,q}_{k}(x))=\Gamma^{s,q}_{k-1}(\partial_{k}^{q}(x))\in C(\partial^{(k-1)}\overline{U})_{s},$$ it follows that $\Gamma^{s,q}_{k}(x)=y+c$ where $y\in C(\partial^{(k)}\overline{U})_{s}$ and $c\in \prod_{j} C_{0}(U_{j}^{(k)})_{q}.$ By $(iii),$ we have an approximate unit $\{u_{q,i}^{(k)}\}_{i=1}^{\infty}$ such that $x u_{q,i}^{(k)}$ is in the intersection of $C(\partial^{(k)}\overline{U})_{q}$ with $\prod_{j} C_{0}(U_{j}^{(k)})_{q}.$ It now follows from $(i)$ that for all $i\in \mathbb{N}$, we have $$\Gamma_{k}^{s,q}(xu_{q,i}^{(k)})\in C(\partial^{(k)}\overline{U})_{s},$$ 
$$y\Gamma_{k}^{s,q}(u_{q,i}^{(k)})\in C(\partial^{(k)}\overline{U})_{s}$$
and thus it follows that also $c u_{s,i}^{(k)}\in C(\partial^{(k)}\overline{U})_{s}.$ Letting $i\to \infty$ now gives $c\in C(\partial^{(k)}\overline{U})_{s}$ and thus $\Gamma_{k}^{s,q}(x)\in C(\partial^{(k)}\overline{U})_{s}.$ Using $(ii),$ it now follows by induction that $C(\partial^{(m-1)}\overline{U})_{q}$
is mapped isomorphically onto $C(\partial^{(m-1)}\overline{U})_{s}.$ As $\ker \partial^{q}_{m}=C_{0}(U)_{q},$ this is equivalent to $$\begin{array}{cccc}C(\overline{U})_{q}/C_{0}(U)_{q}\cong C(\overline{U})_{s}/C_{0}(U)_{s}, & q,s\in (0,1).\end{array}$$ After checking that these isomorphisms are varying norm-continuously as functions of $q$ and $s$, the dotted arrow in~\eqref{keke2} can then be constructed using \fact~below.
\subsection{\bf\textsc{Lifting Results}}

\begin{lem}[{Lemma $2$ in~\cite{gnagy}}]\label{gnagy}
Let $\Hh$ be a separable Hilbert space, let $\K$ be the space of compact operators on $\Hh,$ let $\mathcal{Q}(\Hh)=\mathcal{B}(\Hh)/\K$ be the Calkin algebra and $p: \mathcal{B}(\Hh)\to \mathcal{Q}(\Hh)$ the quotient map. Suppose $A$ is a fixed separable $C^{*}$-algebra of type $\mathrm{I}$ and $\phi_{q}:A\to Q(\Hh),$ $q\in [0,1]$ is a point-norm continuous family of injective $*$-homomorphisms. Denote  $$\mathfrak{A}_{q}:=\phi_{q}(A):$$ 
$$
M_{q}:=p^{-1}(\mathfrak{A}_{q}).
$$
Then there exists a family of injective $*$-homomorphisms $\Phi_{q}:M_{0}\to \mathcal{B}(\Hh),$ $q\in [0,1]$ with the following properties
\begin{enumerate}[(a)]
\item $\Phi_{q}(M_{0})=M_{q}$ for $q\in [0,1]$ and $\Phi_{0}=\id_{M_{0}},$
\item the family $\Phi_{q}:M_{0}\to \mathcal{B}(H),$ $q\in[0,1]$ is point-norm continuous,
\item for every $q\in [0,1],$ the diagram
\begin{equation}\label{com1}
\begin{xy}\xymatrix{
M_{0}\ar[r]^*{\Phi_{q}}\ar[d]_*{p}& M_{q}\ar[d]^*{p}\\
\mathfrak{A}_{0} \ar[r]_*{\phi_{q}\circ \phi_{0}^{-1}} & \mathfrak{A}_{q}
}\end{xy}
\end{equation}
is commutative.
\end{enumerate}
\end{lem}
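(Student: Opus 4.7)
The plan is to realize $\Phi_q$ as conjugation by a norm-continuous family $\{U_q\}_{q \in [0,1]}$ of unitaries in $\mathcal{B}(\Hh)$, with $U_0 = I$ and with the property that $u_q := p(U_q) \in \mathcal{Q}(\Hh)$ implements the identification $\phi_q \circ \phi_0^{-1} : \mathfrak{A}_0 \to \mathfrak{A}_q$, i.e.\ $u_q \phi_0(a) u_q^{*} = \phi_q(a)$ for every $a \in A$. Granted such a family, set $\Phi_q(x) := U_q x U_q^{*}$. For $x \in M_0$ with $p(x) = \phi_0(a)$ one computes $p(\Phi_q(x)) = u_q \phi_0(a) u_q^{*} = \phi_q(a) \in \mathfrak{A}_q$, so $\Phi_q(x) \in M_q$, and applying the same argument to $U_q^{*}$ gives the reverse inclusion; hence $\Phi_q(M_0) = M_q$. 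Injectivity of $\Phi_q$, the initial condition $\Phi_0 = \id_{M_0}$, point-norm continuity, and commutativity of~\eqref{com1} all follow immediately from the corresponding properties of the $U_q$ and $u_q$. The problem is thus reduced to constructing the family $\{U_q\}$.

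For a fixed $q$, the existence of an intertwining $u_q$ is essentially Brown--Douglas--Fillmore / Voiculescu theory. Each injective $\phi_q$ defines an extension class $[\phi_q] \in \mathrm{Ext}(A, \K)$; since $A$ is separable and type $\mathrm{I}$, hence nuclear, $\mathrm{Ext}(A, \K)$ is an abelian group, homotopy invariant in the first variable. Point-norm continuity of $q \mapsto \phi_q$ combined with contractibility of $[0,1]$ therefore forces $[\phi_q] = [\phi_0]$ for all $q$. After identifying $\Hh$ with $\Hh \oplus \Hh'$ and absorbing a fixed faithful nondegenerate representation of $A$ with compact-free image on $\Hh'$, Voiculescu's absorption theorem supplies the unitary $u_q \in \mathcal{Q}(\Hh)$. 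Since the construction will produce a path of unitaries beginning at $u_0 = 1$, each $u_q$ lies in the identity component of the unitary group of $\mathcal{Q}(\Hh)$ and therefore lifts to a unitary $U_q \in \mathcal{B}(\Hh)$.

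The genuinely hard step is coordinating the whole family $\{u_q, U_q\}$ continuously in $q$, with $U_0 = I$. My approach is to repackage the data as a single injective $*$-homomorphism
\[
\widetilde{\phi} : A \longrightarrow C([0,1], \mathcal{Q}(\Hh)), \qquad \widetilde{\phi}(a)(q) := \phi_q(a),
\]
use the identification $C([0,1], \mathcal{Q}(\Hh)) \cong C([0,1], \mathcal{B}(\Hh))/C([0,1], \K)$, and run a parameterized version of Voiculescu's absorption inside this quotient to compare $\widetilde{\phi}$ with the constant family $q \mapsto \phi_0$. Contractibility of $[0,1]$ guarantees that the intertwining unitary in the unitary group of $C([0,1], \mathcal{Q}(\Hh))$ may be chosen to equal $1$ at $q = 0$, and it lifts to a norm-continuous path $q \mapsto U_q$ in the unitary group of $\mathcal{B}(\Hh)$ with $U_0 = I$, via the fibration from the unitary group of $\mathcal{B}(\Hh)$ onto the identity component of the unitary group of $\mathcal{Q}(\Hh)$. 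Defining $\Phi_q(x) := U_q x U_q^{*}$ then finishes the argument.

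The main obstacle is precisely this parameterized absorption/lifting step: for a single $q$ the unitary $u_q$ and a lift $U_q$ are essentially free from classical extension theory, but turning this into a point-norm continuous family of injective $*$-homomorphisms $\Phi_q$ anchored at $\Phi_0 = \id_{M_0}$ is the content of Nagy's Lemma $2$ and the place where the technical work concentrates.
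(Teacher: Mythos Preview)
The paper does not supply its own proof of this lemma: it is quoted verbatim as Lemma~2 from Nagy's paper~\cite{gnagy} and then used as a black box in the proof of Lemma~\ref{1}. So there is nothing in the present paper to compare your argument against.

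As for the proposal itself: your outline is the right shape, and the reduction in the first paragraph is clean and correct --- once you have a norm-continuous family of unitaries $U_q$ with $U_0=I$ whose images in the Calkin algebra intertwine $\phi_0$ and $\phi_q$, everything claimed in (a)--(c) follows at once. But you have not actually proved the lemma: your own final paragraph says that the parameterized absorption/lifting step ``is the content of Nagy's Lemma~2 and the place where the technical work concentrates.'' That is precisely the statement you were asked to prove. Invoking homotopy invariance of $\mathrm{Ext}$ for nuclear $A$ tells you that $[\phi_q]=[\phi_0]$ for each $q$, hence that an intertwining unitary $u_q$ exists for each fixed $q$; it does not by itself produce a \emph{continuous} choice $q\mapsto u_q$, nor a continuous lift $q\mapsto U_q$ with $U_0=I$. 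Your appeal to ``a parameterized version of Voiculescu's absorption inside $C([0,1],\mathcal{Q}(\Hh))$'' is exactly the nontrivial ingredient that needs to be established (or cited with a precise reference), and you have only asserted it. So as written this is a plan, not a proof.
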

We remind the reader that a $C^{*}$-algebra of type $\mathrm I,$ is one where the image of every irreducible $*$-representation includes a non-zero compact operator. Here, we will use a modified version of Lemma~\ref{gnagy}.

\begin{lem}\label{1}
Let $\Hh$ be a separable Hilbert space, let $\K$ be the space of compact operators on $\Hh,$ let $\mathcal{Q}(\Hh)=\mathcal{B}(\Hh)/\K$ be the Calkin algebra and $p: \mathcal{B}(\Hh)\to \mathcal{Q}(\Hh)$ be the quotient map. For every $q\in (0,1),$ suppose $\Aa_{q}\subseteq \mathcal{Q}(\Hh)$ is a separable $C^{*}$-algebra of type $\mathrm{I}$ and we have a family of $*$-isomorphisms $\phi_{s,q}:\Aa_{q}\to \Aa_{s},$ $s,q\in (0,1)$ which are continuous in the point-norm topology (i.e. for every fixed $q\in (0,1)$ and $x\in \Aa_{q},$ the map $s\in (0,1)\mapsto \phi_{s,q}(x)\in \mathcal{Q}(\Hh)$ is norm-continuous). Assume moreover that $\phi_{q,q}=\id_{\Aa_{q}}$ and $\phi_{t,s}\circ\phi_{s,q}=\phi_{t,q}$ for all $t,s,q\in (0,1).$ Denote
$$
\Bb_{q}:=p^{-1}(\Aa_{q}).
$$
Then there exists a family of inner $*$-isomorphisms $\Phi_{s,q}:\mathcal{B}(\Hh)\to\mathcal{B}(\Hh),$ $s,q\in(0,1)$ (i.e $\Phi_{s,q}(x)=U_{s,q}^{*}xU_{s,q}$ for some unitary $U_{s,q}\in \mathcal{B}(\mathrm{H})$) with the following properties
\begin{enumerate}[(i)]
\item $\Phi_{s,q}(\Bb_{q})=\Bb_{s}$ for $s,q\in (0,1),$
\item $\Phi_{q,q}=\id$ and $\Phi_{t,s}\circ \Phi_{s,q}=\Phi_{t,s}$ for all $t,s,q\in (0,1),$
\item for fix $q,$ the family $\Phi_{s,q}:\Bb_{s}\to \mathcal{B}(H),$ $s\in[0,1]$ is point-norm continuous,
\item for all $s,q\in (0,1),$ the diagram
\begin{equation}\label{com}
\begin{xy}\xymatrix{
\Bb_{q}\ar[r]^*{\Phi_{s,q}}\ar[d]_*{p}& \Bb_{s}\ar[d]^*{p}\\
\Aa_{q} \ar[r]_*{\phi_{s,q}} & \Aa_{s}
}\end{xy}
\end{equation}
is commutative.
\end{enumerate}
\end{lem}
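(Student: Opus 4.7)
The plan is to reduce everything to Lemma~\ref{gnagy}. Fix a base point $q_{0}\in(0,1)$, set $A:=\Aa_{q_{0}}$, and consider the family $\{\phi_{q,q_{0}}:A\to\mathcal{Q}(\Hh)\}_{q\in(0,1)}$ of injective $*$-homomorphisms. By the cocycle relation $\phi_{s,q_{0}}=\phi_{s,q}\circ\phi_{q,q_{0}}$ together with the assumed point-norm continuity of the $\phi_{s,q}$'s, this family satisfies the hypotheses of Lemma~\ref{gnagy} (applied on compact subintervals of $(0,1)$ containing $q_{0}$, after rescaling so that $q_{0}$ plays the role of the distinguished endpoint). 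This delivers a point-norm continuous family of injective $*$-homomorphisms $\Phi_{q}:\Bb_{q_{0}}\to\mathcal{B}(\Hh)$, $q\in(0,1)$, such that $\Phi_{q}(\Bb_{q_{0}})=\Bb_{q}$, $\Phi_{q_{0}}=\id$, and $p\circ\Phi_{q}=\phi_{q,q_{0}}\circ p$.

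Next I would upgrade each $\Phi_{q}$ to the restriction of an inner automorphism. For $k\in\K$ the identity $p(\Phi_{q}(k))=\phi_{q,q_{0}}(p(k))=0$ shows that $\Phi_{q}(\K)\subseteq\K$, and running the same argument with $\Phi_{q}^{-1}$ (obtained from the inverse family $\phi_{q_{0},q}$) gives $\Phi_{q}(\K)=\K$. Since every $*$-automorphism of $\K(\Hh)$ is spatial, there exists a unitary $U_{q}\in\mathcal{B}(\Hh)$ with $\Phi_{q}(k)=U_{q}^{*}kU_{q}$ for all $k\in\K$. The key observation is that this formula automatically extends to all of $\Bb_{q_{0}}$: for $x\in\Bb_{q_{0}}$ and $k\in\K$ the product $xk$ lies in $\K$, so
\[
\Phi_{q}(x)\cdot U_{q}^{*}kU_{q}=\Phi_{q}(xk)=U_{q}^{*}xkU_{q}=(U_{q}^{*}xU_{q})\cdot U_{q}^{*}kU_{q}.
\]
Because $\K$ is irreducibly represented on $\Hh$ and $U_{q}^{*}\K U_{q}=\K$, varying $k$ forces $\Phi_{q}(x)=U_{q}^{*}xU_{q}$ for every $x\in\Bb_{q_{0}}$.

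With unitaries $\{U_{q}\}_{q\in(0,1)}$ in hand, I would set $W_{s,q}:=U_{q}^{*}U_{s}$ and define $\Phi_{s,q}(x):=W_{s,q}^{*}xW_{s,q}$ on all of $\mathcal{B}(\Hh)$. The four required properties are then quick consequences. Property (i) follows from $\Phi_{s,q}|_{\Bb_{q}}=\Phi_{s}\circ\Phi_{q}^{-1}$, which maps $\Bb_{q}$ onto $\Bb_{s}$ by construction. Property (ii) follows from $W_{q,q}=I$ and $W_{s,q}W_{t,s}=U_{q}^{*}U_{s}U_{s}^{*}U_{t}=W_{t,q}$. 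Property (iii) follows from writing $\Phi_{s,q}(x)=\Phi_{s}(\Phi_{q}^{-1}(x))$ for $x\in\Bb_{q}$ and invoking the point-norm continuity of $\Phi_{s}$ at the fixed element $\Phi_{q}^{-1}(x)\in\Bb_{q_{0}}$. For (iv), the cocycle relation yields $p\circ\Phi_{s,q}=\phi_{s,q_{0}}\circ p\circ\Phi_{q}^{-1}=\phi_{s,q_{0}}\circ\phi_{q_{0},q}\circ p=\phi_{s,q}\circ p$.

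The main technical obstacle I anticipate is the mismatch between the open parameter interval $(0,1)$ and Lemma~\ref{gnagy}'s requirement of a closed interval with a privileged endpoint. Fixing an arbitrary base point $q_{0}$ breaks this symmetry, so one must check that the resulting $\Phi_{q}$'s (and hence the $U_{q}$'s, which are only determined up to a scalar) can be chosen coherently on all of $(0,1)$. The cocycle identity $\phi_{t,s}\circ\phi_{s,q}=\phi_{t,q}$ is precisely the ingredient that ensures the composition law in (ii) is independent of the choice of base point, and the scalar ambiguity in each individual $U_{q}$ cancels in the product $W_{s,q}=U_{q}^{*}U_{s}$, so the inner automorphisms $\Phi_{s,q}$ are unambiguously defined globally on $(0,1)$.
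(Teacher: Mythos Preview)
Your proof is correct and follows essentially the same route as the paper: reduce to Nagy's Lemma~\ref{gnagy} via a fixed base point, set $\Phi_{s,q}=\Phi_{s}\circ\Phi_{q}^{-1}$, and upgrade to an inner automorphism of $\mathcal{B}(\Hh)$ using that $\Phi_{s,q}$ restricts to an automorphism of $\K$ together with the multiplier-type argument $\Phi_{s,q}(x)\Phi_{s,q}(k)=\Phi_{s,q}(xk)$. The only point where the paper is more explicit is the one you flag at the end: it exhausts $(0,1)$ by a bi-infinite sequence $\{a_{k}\}_{k\in\mathbb{Z}}$ of closed subintervals, applies Lemma~\ref{gnagy} on each $[a_{k},a_{k+1}]$, and composes the resulting isomorphisms along the chain to define $\Phi_{q}$ globally, which is exactly the coherence construction your final paragraph anticipates.
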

\begin{proof}
If $a<b,$ then clearly the conclusion of Lemma~\ref{gnagy} still holds if we change the interval to $[a,b].$ Let $a_{k}\in (0,1),$ $k\in \mathbb{Z}$ be a strictly increasing sequence such that $a_{k}\to 1$ and $a_{-k}\to 0$ as $k\to \infty.$ For $k\geq 0,$ we apply Lemma~\ref{gnagy} to the set of injective $*$-homomorphisms $$\begin{array}{ccc}\tilde\phi_{q,k}:=\phi_{q,a_{k}}:\Aa_{a_{k}}\to \mathcal{Q}(\Hh),& q\in [a_{k},a_{k+1}],\end{array}$$
and let $M_{q,k}=p^{-1}(\tilde\phi_{q,k}(\Aa_{a_{k}}))=p^{-1}(A_{q}).$
For $k<0,$ we instead apply Lemma~\ref{gnagy} to 
$$\begin{array}{ccc}\tilde\phi_{q,k}:=\phi_{q,a_{k+1}}:\Aa_{a_{k+1}}\to \mathcal{Q}(\Hh),& q\in [a_{k},a_{k+1}].\end{array}$$
Let $\tilde\Phi_{q,k},$ $k\in \mathbb{Z},$ be the $*$-isomorphisms aquired by applying Lemma~\ref{gnagy}. Note that $\tilde\Phi_{q,k}$ is an isomorphism from $\Bb_{a_{k}}$ to $ \Bb_{q}$ for $q\in [a_{k},a_{k+1}]$ and $k\geq 0$ and an isomorphism from $\Bb_{a_{k+1}}$ to $\Bb_{q}$ if $k<0.$ Let us define a $*$-isomorphism $\Phi_{q}:\Bb_{a_{0}}\to \Bb_{q}$ by the formula
\begin{equation}\label{phimap}
\Phi_{q}=
\begin{cases}
\tilde\Phi_{q,k}\circ\tilde \Phi_{a_{k},k-1}\circ \cdots \circ\tilde \Phi_{a_{1},0}, & \text{if $q\in [a_{k},a_{k+1}]$ and $k\geq 0$}\\
\tilde\Phi_{q,k}\circ \tilde\Phi_{a_{k},k+1}\circ \cdots \circ\tilde\Phi_{a_{-1},-1}, & \text{if $q\in [a_{k},a_{k+1}]$ and $k< 0$}
\end{cases}
\end{equation}
 It follows from Lemma~\ref{gnagy} and the construction of $\Phi_{q}$ that for $x\in \Bb_{a_{0}},$ the map $q\in (0,1)\mapsto \Phi_{q}(x)\in \mathcal{B}(H)$ is norm-continuous. That $p\circ \Phi_{q}=\phi_{q,a_{0}}\circ p$ holds follows from $\phi_{t,s}\circ\phi_{s,q}=\phi_{t,q}$ and iteration of the commutative diagram
$$
\begin{xy}\xymatrix{
\Bb_{a_{0}}\ar[r]^*{\tilde\Phi_{a_{1},0}}\ar[d]_*{p} &\Bb_{a_{1}}\ar[r]^*{\tilde\Phi_{q,1}}\ar[d]_*{p}& \Bb_{q}\ar[d]^*{p} \\
\Aa_{a_{0}}\ar[r]_*{\phi_{a_{1},a_{0}}}&\Aa_{a_{1}} \ar[r]_*{\phi_{q,a_{1}}} & \Aa_{q} 
}\end{xy}
$$
 We then let $\Phi_{s,q}=\Phi_{s}\circ \Phi_{q}^{-1}$ for $s,q\in (0,1).$ By~\eqref{com1}, we have a commutative diagram 
$$
\begin{xy}\xymatrix{
\Bb_{q}\ar[r]^*{\Phi_{q}^{-1}}\ar[d]_*{p} &\Bb_{a_{0}}\ar[r]^*{\Phi_{s}}\ar[d]_*{p}& \Bb_{s}\ar[d]^*{p} \\
\Aa_{q}\ar[r]_*{\phi_{q,a_{0}}^{-1}}&\Aa_{a_{0}} \ar[r]_*{\phi_{s,a_{0}}} & \Aa_{s} 
}\end{xy}
$$

From this, we get the commutivity of~\eqref{com}, as $$\phi_{s,a_{0}}\circ \phi_{q,a_{0}}^{-1}=\phi_{s,a_{0}}\circ\phi_{a_{0},q}=\phi_{s,q}.$$
Hence, $(i),$ $(ii),$ $(iii),$ and $(iv)$ holds for $\Phi_{s,q}$ as a $*$-isomorphism $\Bb_{q}\to \Bb_{s}.$ 
\\

We now extend $\Phi_{s,q}$ as an inner automorphism of all of $\mathcal{B}(\Hh)$. To do this, we first show that the restriction $\Phi_{s,q}|_{\K}$ is inner.
We have $\K\subseteq \Bb_{q},$ and we get from the diagram~\eqref{com} that $\Phi_{s,q}(\K)\subseteq\K,$ and as $\Phi_{s,q}^{-1}=\Phi_{q,s}$ it follows that actually \begin{equation}\label{k}\Phi_{s,q}(\K)=\K.\end{equation} 
So $\Phi_{s,q}$ is an irreducible representation of $\K.$ It is known that any such is unitarily equivalent to the identity representation (e.g. see Corollary~$1.10$ in~\cite{dav}). Hence, there exists a unitary $U_{s,q}\in \mathcal{B}(\Hh)$ such that $\Phi_{s,q}(x)=U_{s,q}x U_{s,q}^{*}$ for all $x\in \K.$ For arbitrary $y\in \Bb_{q},$ we obtain $$\Phi_{s,q}(x)\Phi_{s,q}(y)=\Phi_{s,q}(xy)=U_{s,q}x yU_{s,q}^{*}=\Phi_{s,q}(x)U_{s,q}yU_{s,q}^{*}$$ for all $x\in \K.$ This gives $\Phi_{s,q}(y)=U_{s,q}y U_{s,q}^{*}.$
\end{proof}
\section{\bf\textsc{Preliminaries}}
In this section we recall some facs about the Hopf algebras $\mathrm{U_{q}}(\mathfrak{g})$ and $\mathbb{C}[\mathrm{G]_{q}}.$ The presentation is mainly taken from~\cite{nt}. A general reference for the technical claims made here is~\cite{ks}.
\subsection{\bf\textsc{The Quantum Group $\mathrm{U_{q}}(\mathfrak{g})$}}
Let $\mathrm G$ be a simply connected semisimple compact Lie group and let $\mathfrak{g}$ denote its complexified Lie algebra. We write $\mathrm U (\mathfrak{g})$ to denote the universal enveloping algebra of $\mathfrak{g}$ equipped with a $*$-involution induced by the real form derived from $\G.$ Moreover, we let $\mathfrak{h}\subseteq \mathfrak{g}$ be the Cartan sub-algebra coming from a maximal torus $\mathrm T\subseteq \G.$ Let $\mathfrak{t}\subseteq \mathfrak{h}$ be the real subspace of skew-symmetric elements. Write $\Phi$ for the set of roots of $\mathfrak{g}$, $\Phi_{+}$ for the set of positive roots and $\Omega=\{\alpha_{1},\dots,\alpha_{n}\}$ for the set of simple roots.
\\

We denote the Weyl group of $\mathfrak g$ by $W$ and identify its set of generators $s_{i},$ $i=1,\dots, n,$ (as a Coxeter group) with $\Omega$ by the identification $\alpha_{i}\mapsto s_{\alpha_{i}}=:s_{i}.$ Moreover, we identify $\Phi_{+}$ with the set $\{w s_{i} w^{-1}:s_{i}\in \Omega,w\in W\}\subseteq W.$ In both instances, we write the identification as $\alpha\mapsto h_{\alpha}\in \mathfrak{t}.$
\\

Let $q\in (0,1).$ Let $(a_{ij})_{ij}$ be the Cartan matrix of $\mathfrak{g}$ and $d_{i}=\frac{(\alpha_{i},\alpha_{i})}{2}$ for $i=1,\dots,n$. Let $q_{i}=q^{d_{i}}$ for $i=1,\dots, n.$ The quantized universal enveloping algebra $\mathrm{U_{q}}(\mathfrak{g})$ is the unital complex algebra generated by elements $E_{i},F_{i},K_{i},K_{i}^{-1},$ $i=1,\dots, n$ subject to the relations
$$
\begin{array}{cccc}
K_{i}K^{-1}_{i}=K^{-1}_{i}K_{i}=1, & K_{i}K_{j}=K_{j}K_{i}, & K_{i}E_{j}=q_{i}^{a_{ij}}E_{j}K_{i}, & K_{i}F_{j}=q_{i}^{a_{ij}}F_{i}K_{j},
\end{array}
$$
$$
E_{i}F_{j}-F_{j}E_{i}=\delta_{ij}\frac{K_{i}-K_{i}^{-1}}{q_{i}-q_{i}^{-1}}
$$
$$
\sum_{k=0}^{1-a_{ij}}(-1)^{k}\left[ \begin{array}{cc}1-a_{ij}\\ k  \end{array}\right]_{q_{i}}E_{i}^{k}E_{j}E_{i}^{1-a_{ij}-k}=0, 
$$
$$
 \sum_{k=0}^{1-a_{ij}}(-1)^{k}\left[ \begin{array}{cc}1-a_{ij}\\ k  \end{array}\right]_{q_{i}}F_{i}^{k}F_{j}F_{i}^{1-a_{ij}-k}=0
$$
where $\left[ \begin{array}{cc}k\\ j  \end{array}\right]_{q_{i}}=\overset{j-1}{\underset{m=0}\prod}\frac{q_{i}^{-(k-m)}-q_{i}^{k-m}}{q_{i}^{-(m+1)}-q_{i}^{m+1}}$ is a $q$-analogue of the binomial coefficients. 
\\

$\mathrm{U_{q}}(\mathfrak{g})$ becomes a Hopf $*$-algebra when equipped with a co-associative co-product $\hat\Delta_{q},$ a co-unit $\hat\epsilon_{q},$ an antipode $\hat S_{q}$ and a $*$-involution given on generators as
$$
\begin{array}{cccc}
\hat\Delta_{q}(K_{i})=K_{i}\otimes K_{i}, & \hat\Delta_{q}(E_{i})=E_{i}\otimes 1+K_{i}\otimes E_{i}, &\hat\Delta_{q}(F_{i})=F_{i}\otimes K^{-1}_{i}+1\otimes F_{i},
\end{array}
$$ 
$$
\begin{array}{cccc}
\hat\epsilon_{q}(E_{i})=\hat\epsilon_{q}(F_{i})=0, & \hat\epsilon_{q}(K_{i})=1,
\end{array}
$$
$$
\begin{array}{cccc}
\hat S_{q}(F_{i})=-F_{i}K_{i}, & \hat S_{q}(E_{i})=-K^{-1}_{i}E_{i}, & \hat S_{q}(K_{i})=K_{i}^{-1}
\end{array}
$$
$$
\begin{array}{cccc}
K_{i}^{*}=K_{i}, & E_{i}^{*}=F_{i}K_{i}, & F_{i}^{*}=K_{i}^{-1}E_{i}.
\end{array}
$$
We denote the antipode of $\mathrm{U_{q}}(\mathfrak{g})$ by $\hat S_{q}.$ For $q=1,$ we let $\mathrm{U_{1}}(\mathfrak{g})=\mathrm{U}(\mathfrak{g})$ be the ordinary universal enveloping algebra with the usual Hopf $*$-algebra structure and denote the co-product, co-unit, antipode simply by $\hat\Delta,$ $\hat \epsilon,$ $\hat S.$
\\

Let $P$ be the set of weights for $\mathfrak{g}.$ Let $P_{+}\subseteq P$ be the set of dominant integral weights, i.e. $\lambda\in P$ such that $\langle \lambda,\alpha_{i} \rangle\geq 0$ for $i=1,\dots,n.$ Moreover, let $P_{++}\subseteq P_{+}\subseteq P$ be the set of those dominant weights $\lambda$ such that $\langle \lambda, \alpha_{i} \rangle>0$ for $i=1,\dots ,n.$
\\

The theory of $\mathrm{U_{q}}(\mathfrak{g})$-modules is very similar to the case $q=1$ (see~\cite{ks}). It is well known that for every $q\in(0,1),$ the monoidal category $\mathcal{M}_{q}(\mathfrak{g})$ of 
admissible finite dimensional $\mathrm{U_{q}}(\mathfrak{g})$-modules are
parameterized by $\lambda \in P_{+},$ with the same fusion rules as $\mathcal{M}(\mathfrak{g}),$
the monoidal categories of finite dimensional $\mathrm{U}(\mathfrak{g})$-modules. 
Moreover, for any $\lambda \in P_{+},$ the vector spaces $V_{\lambda}^{q}$ and $V_{\lambda}$ have the same dimension. There is a similar decomposition into weight sub-spaces $V_{\lambda}^{q}(\gamma)\subseteq V^{q}_{\lambda},$ for $\gamma\in P;$ these are the sub-spaces such that \begin{equation}\label{ki}\begin{array}{cccc}K_{i}\cdot\eta=q_{i}^{\gamma(H_{i})}\eta, & \eta\in V_{\lambda}^{q}(\gamma), & i=1,\dots, n. \end{array}\end{equation}
For each $\gamma\in P,$ we also have a vector space isomorphism
$$
V_{\lambda}^{q}(\gamma)\cong V_{\lambda}(\gamma).
$$
In particular, the sub-space $V_{\lambda}^{q}(\lambda)$ is one-dimensional and is the highest weight-space of $V_{\lambda}^{q}$, in the sense that for $\xi\in V_{\lambda}^{q}(\lambda),$ we have
$$
\begin{array}{ccc}
E_{i}\cdot\xi=0,& \text{for $i=1,\dots, n.$}
\end{array}
$$
There is a non-degenerate inner product $\langle\cdot,\cdot \rangle$ on $V_{\lambda}^{q}$ such that 
$$
\begin{array}{ccccc}
\langle a\cdot \eta,\xi\rangle=\langle \eta,a^{*}\cdot \xi\rangle, & \forall a\in \mathrm{U_{q}}(\mathfrak{g}),&  \forall\eta,\xi\in V_{\lambda}^{q}.\\
\end{array}
$$
Clearly, with respect to this inner product, different weight sub-spaces $V_{\lambda}^{q}(\gamma)$ are orthogonal for different $\gamma$. For any $w\in W,$ the Weyl group of $\mathrm G,$ the weight sub-space $V_{\lambda}^{q}(w\cdot \lambda)$ is one dimensional. Thus we can choose a unit vector $\xi_{w\cdot\lambda}\in V_{\lambda}^{q}(w\cdot \lambda).$ Let us do this in such way that $\xi_{w\cdot\lambda}=\xi_{v\cdot\lambda}$ if $w\cdot\lambda=v\cdot\lambda$ and thus no ambiguity arises from this notation. 
\\

For $w\in W,$ let $\elll(w)\in\mathbb{Z}_{+}$ denote the length of $w.$ By definition, this is the smallest integer $m$ such that $w$ can be written as a product of $m$ generators
\begin{equation}\label{reduced}
\begin{array}{ccccc}
w=s_{i_{1}}\cdots s_{i_{m}}.
\end{array}
\end{equation}
For the identity element $e\in W,$ we let $\elll(e)=0.$ If $\elll(w)=m$ in~\eqref{reduced} then the product $w=s_{i_{1}}\cdots s_{i_{m}}$ is said to be \textit{reduced}. Recall that the Bruhat order on $W$ is the partial ordering generated by declaring that
\begin{equation}\label{bruhato}
\begin{array}{ccc}
v<w ,& \text{if $\exists \alpha\in \Phi_{+}$ such that $v\alpha=w$ and $\elll(v)=\elll(w)-1.$}
\end{array}
\end{equation}
Let $\mathrm{U}_{q}(\mathfrak{b})$ be the sub-algebra of $\mathrm{U_{q}}(\mathfrak{g})$ generated by $K_{i},K_{i}^{-1},E_{i}$ for $i=1,\dots, n.$ We can concretely connect the Bruhat order of $W$ with certain $\mathrm{U_{q}}(\mathfrak{g})$-modules in the following way:
\begin{lem}[{Proposition~3.3 in~\cite{nt}}]\label{lem1}

Let $v,w\in W.$ The following are equivalent:
\begin{enumerate}[(i)]
\item We have $v\leq w$ in the Bruhat order on $W.$
\item For any $\lambda\in P_{++}$ we have $V_{\lambda}^{q}(v\cdot \lambda)\subset (\mathrm{U_{q}}(\mathfrak{b}))V_{\lambda}^{q}(w\cdot \lambda)$.
\end{enumerate}
\end{lem}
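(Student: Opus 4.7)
The statement is a standard characterisation of the Bruhat order via quantum Demazure submodules, so I would organise the proof around
\[D_w^q(\lambda) := \mathrm{U}_{q}(\mathfrak{b})\cdot V_\lambda^q(w\cdot\lambda)\subseteq V_\lambda^q.\]
Since $V_\lambda^q(w\cdot\lambda)=\mathbb{C}\,\xi_{w\cdot\lambda}$ is one-dimensional, condition $(ii)$ is exactly $\xi_{v\cdot\lambda}\in D_w^q(\lambda)$, and both implications go by induction on $\elll(w)$, with base case $\elll(w)=0$ trivial.

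For $(i)\Rightarrow(ii)$, in the inductive step I would pick a simple reflection $s_i$ with $\elll(s_iw)<\elll(w)$ and set $w':=s_iw$. The standard length criterion gives $w^{-1}\alpha_i\in\Phi_-$, and combined with $\lambda\in P_{++}$ this yields $n:=-\langle w\cdot\lambda,\alpha_i^\vee\rangle>0$. The first key step is to show $\xi_{w'\cdot\lambda}\in D_w^q(\lambda)$: the two extremes of the $\alpha_i$-string of $V_\lambda^q$ through $\xi_{w\cdot\lambda}$ are the one-dimensional extremal spaces $V_\lambda^q(w\cdot\lambda)$ and $V_\lambda^q(w'\cdot\lambda)$, so the $\mathrm{U}_{q_i}(\mathfrak{sl}_2)$-theory applied to the relevant $\mathfrak{sl}_2$-isotypic component of $V_\lambda^q$ forces $E_i^n\xi_{w\cdot\lambda}$ to be a nonzero scalar multiple of $\xi_{w'\cdot\lambda}$. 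This yields $D_{w'}^q(\lambda)\subseteq D_w^q(\lambda)$. By the Bruhat lifting property, any $v\leq w$ either satisfies $v\leq w'$, in which case the inductive hypothesis applied to $w'$ together with the preceding inclusion closes the argument, or satisfies $s_iv\leq w'$ with $s_iv<v$, which requires a separate argument.

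For $(ii)\Rightarrow(i)$ I would argue the contrapositive using the quantum Demazure character formula (Kashiwara, Joseph), which identifies the weights occurring in $D_w^q(\lambda)$: among the extremal weights of $V_\lambda^q$, only those of the form $u\cdot\lambda$ with $u\leq w$ appear with positive multiplicity. Hence $v\not\leq w$ forces $V_\lambda^q(v\cdot\lambda)\cap D_w^q(\lambda)=\{0\}$, and $(ii)$ fails for every $\lambda\in P_{++}$.

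The main obstacle I anticipate is precisely the second sub-case of the induction in $(i)\Rightarrow(ii)$: when the Bruhat lifting pushes us into $s_iv\leq w'$ with $s_iv<v$, the extremal weight $v\cdot\lambda$ lies strictly \emph{below} $s_iv\cdot\lambda$ along the $\alpha_i$-string, so $\xi_{v\cdot\lambda}$ cannot be reached from the inductively produced $\xi_{s_iv\cdot\lambda}$ through the action of $\mathrm{U}_{q}(\mathfrak{b})$, which contains $E_i$ but not $F_i$. The natural remedy is to construct $\xi_{v\cdot\lambda}$ directly as $E_\gamma^{\,m}\,\xi_{w\cdot\lambda}$ for a positive root $\gamma$ coming from the covering relation between $v$ and $w$ along a chosen reduced expression of $w$, using a Lusztig PBW root vector $E_\gamma\in \mathrm{U}_q^+\subseteq \mathrm{U}_{q}(\mathfrak{b})$; nonvanishing then follows from the quantum Serre and braid-group relations together with an $\mathfrak{sl}_2$-type argument along the $\gamma$-string in $V_\lambda^q$.
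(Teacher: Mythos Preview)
The paper does not give its own proof of this lemma: it is quoted verbatim as Proposition~3.3 of Neshveyev--Tuset~\cite{nt}, so there is nothing in the present paper to compare your argument against. That said, let me comment on the proposal itself.

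Your overall architecture is the standard one, and the reduction of $(ii)\Rightarrow(i)$ to the Demazure character formula is fine (if heavy). The genuine gap is exactly where you flag it: the second sub-case of $(i)\Rightarrow(ii)$. Your proposed remedy---hitting $\xi_{w\cdot\lambda}$ with a Lusztig PBW root vector $E_\gamma$ for a non-simple $\gamma$ and invoking an ``$\mathfrak{sl}_2$-type argument along the $\gamma$-string''---does not work as stated. For non-simple $\gamma$ the elements $E_\gamma,F_\gamma,K_\gamma$ do \emph{not} generate a copy of $\mathrm{U}_{q'}(\mathfrak{sl}_2)$ inside $\mathrm{U}_q(\mathfrak{g})$, so there is no clean string argument guaranteeing that $E_\gamma^m\xi_{w\cdot\lambda}$ is a nonzero multiple of an extremal vector; the braid-group and Serre relations alone do not supply this.

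The clean fix is much simpler and avoids root vectors entirely: show that when $s_iw<w$ the Demazure module $D_w^q(\lambda)=\mathrm{U}_q(\mathfrak{b})\xi_{w\cdot\lambda}$ is $F_i$-stable. Indeed $s_iw<w$ gives $\langle w\cdot\lambda,\alpha_i^\vee\rangle<0$, so $F_i\xi_{w\cdot\lambda}=0$; and the commutation relations $[E_j,F_i]=\delta_{ij}(K_i-K_i^{-1})/(q_i-q_i^{-1})$ together with $F_iK_j=q_j^{a_{ji}}K_jF_i$ give $F_i\,\mathrm{U}_q(\mathfrak{b})\subseteq \mathrm{U}_q(\mathfrak{b})F_i+\mathrm{U}_q(\mathfrak{b})$, whence $F_iD_w^q(\lambda)\subseteq D_w^q(\lambda)$. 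Now in your problematic case you have $\xi_{s_iv\cdot\lambda}\in D_{w'}^q(\lambda)\subseteq D_w^q(\lambda)$ by induction, and since $s_iv<v$ the vector $\xi_{v\cdot\lambda}$ is a nonzero multiple of $F_i^m\xi_{s_iv\cdot\lambda}$ for the appropriate $m$ (genuine $\mathfrak{sl}_2$ along the simple root $\alpha_i$), so $\xi_{v\cdot\lambda}\in D_w^q(\lambda)$ as required.
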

Both the subspaces $V_{\lambda}^{q}(v\cdot \lambda)$ and $(\mathrm{U_{q}}(\mathfrak{b}))V_{\lambda}^{q}(w\cdot \lambda)$ are invariant under the actions of $K_{i},$ $i=1,\dots, n.$ Notice that these are commuting self-adjoint operators on $V^{q}_{\lambda}$ that separate the weight-spaces. As the space $V_{\lambda}^{q}(v\cdot \lambda)$ is one-dimensional, it follows from~\eqref{ki} that if $v \not \leq w$ and hence $V_{\lambda}^{q}(v\cdot \lambda)\not\subset (\mathrm{U_{q}}(\mathfrak{b}))V_{\lambda}^{q}(w\cdot \lambda),$ then we have the following corollary of Lemma~\ref{lem1}.
\begin{cor}\label{vnotw}
If $\lambda\in P_{++}$ and $v\not \leq w,$ then 
\begin{equation*}V_{\lambda}^{q}(v\cdot \lambda)\bot (\mathrm{U_{q}}(\mathfrak{b}))V_{\lambda}^{q}(w\cdot \lambda).\end{equation*}
\end{cor}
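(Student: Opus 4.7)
The plan is to use the fact that both subspaces are $K_i$-stable and then invoke the hint given immediately before the statement: the $K_i$ are commuting self-adjoint operators whose joint eigenspaces are exactly the weight subspaces $V_\lambda^q(\gamma)$, so any $K_i$-invariant subspace of $V_\lambda^q$ automatically splits as a direct sum of its intersections with the weight subspaces.

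First I would observe that $(\mathrm U_q(\mathfrak b))V_\lambda^q(w\cdot\lambda)$ is stable under every $K_i$. Indeed, $V_\lambda^q(w\cdot\lambda)$ is a $K_i$-eigenspace by~\eqref{ki}, and the defining relations $K_i E_j = q_i^{a_{ij}} E_j K_i$ together with $K_i K_j = K_j K_i$ imply that $\mathrm U_q(\mathfrak b)$ is stable under the adjoint action of each $K_i$; hence $K_i\cdot \mathrm U_q(\mathfrak b)V_\lambda^q(w\cdot\lambda)\subseteq \mathrm U_q(\mathfrak b)V_\lambda^q(w\cdot\lambda)$. Since the $K_i$ are commuting diagonalizable (in fact self-adjoint) operators on the finite-dimensional space $V_\lambda^q$, any subspace stable under all of them decomposes as the direct sum of its intersections with the joint eigenspaces, which are precisely the weight subspaces. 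Thus
\[
(\mathrm U_q(\mathfrak b))V_\lambda^q(w\cdot\lambda) \;=\; \bigoplus_{\gamma\in P}\bigl((\mathrm U_q(\mathfrak b))V_\lambda^q(w\cdot\lambda)\cap V_\lambda^q(\gamma)\bigr).
\]

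Next, since $v\not\leq w$, Lemma~\ref{lem1} yields $V_\lambda^q(v\cdot\lambda)\not\subset (\mathrm U_q(\mathfrak b))V_\lambda^q(w\cdot\lambda)$. But $V_\lambda^q(v\cdot\lambda)$ is one-dimensional, so the intersection $(\mathrm U_q(\mathfrak b))V_\lambda^q(w\cdot\lambda)\cap V_\lambda^q(v\cdot\lambda)$ is either $V_\lambda^q(v\cdot\lambda)$ itself or zero; the first case is excluded, so the intersection is zero. Combined with the decomposition above, this shows that $(\mathrm U_q(\mathfrak b))V_\lambda^q(w\cdot\lambda)$ is contained in the sum $\bigoplus_{\gamma\neq v\cdot\lambda}V_\lambda^q(\gamma)$.

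Finally, the inner product $\langle\cdot,\cdot\rangle$ satisfies $\langle K_i\cdot\eta,\xi\rangle=\langle\eta,K_i^*\cdot\xi\rangle=\langle\eta,K_i\cdot\xi\rangle$, and distinct weights $\gamma\neq\gamma'$ give distinct eigenvalue-tuples $(q_i^{\gamma(H_i)})$ and $(q_i^{\gamma'(H_i)})$, so $V_\lambda^q(\gamma)\perp V_\lambda^q(\gamma')$ for $\gamma\neq\gamma'$. Therefore $V_\lambda^q(v\cdot\lambda)$ is orthogonal to every summand appearing in the decomposition of $(\mathrm U_q(\mathfrak b))V_\lambda^q(w\cdot\lambda)$, which gives the claimed orthogonality. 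The main (very mild) obstacle is just the step verifying that a subspace invariant under commuting diagonalizable operators splits along the joint eigenspaces; everything else is essentially bookkeeping using Lemma~\ref{lem1} and the fact that $V_\lambda^q(v\cdot\lambda)$ is one-dimensional.
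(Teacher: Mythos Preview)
Your argument is correct and follows essentially the same line as the paper: both subspaces are $K_i$-invariant, the $K_i$ are commuting self-adjoint operators separating the weight spaces, and since $V_\lambda^q(v\cdot\lambda)$ is one-dimensional and (by Lemma~\ref{lem1}) not contained in $(\mathrm U_q(\mathfrak b))V_\lambda^q(w\cdot\lambda)$, orthogonality follows. You have simply written out in detail what the paper states in the sentence preceding the corollary.
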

\subsection{\bf\textsc{The Quantum Group $\mathbb{C}[\mathrm{G]_{q}}$}}
We define $\mathbb{C}[\mathrm{G]_{q}}\subseteq (\mathrm{U_{q}}(\mathfrak{g}))^{*}$ as the subspace of the dual generated by linear functionals of the form
\begin{equation}\label{cgq}
C_{\eta,\xi}^{\lambda}(a):=\langle a\cdot \eta,\xi\rangle, 
\end{equation}
$$
\begin{array}{ccccc}
\text{for} &a\in \mathrm{U_{q}}(\mathfrak{g}),& \eta,\xi\in V_{\lambda}^{q}, & \lambda\in P_{+}.
\end{array}
$$
We let $\Delta_{q},$ $\epsilon_{q}$ and $S_{q}$ respectively be the dual of the product, co-product and antipode of $\mathrm{U_{q}}(\mathfrak{g}).$  Moreover, we define a $*$-involution on $\mathbb{C}[\mathrm{G]_{q}}$ by the formula $$(C_{\eta,\xi}^{\lambda})^{*}(a)=\overline{C_{\eta,\xi}^{\lambda}((\hat S_{q}(a))^{*})}$$ 
We let $\mathbb{C}[\mathrm{G]}=\mathbb{C}[\mathrm{G]_{1}}$ as well as $\Delta=\Delta_{1}$, $\epsilon=\epsilon_{1}$ and $S=S_{1}$ denote the $*$-algebra of regular functions on $\mathrm{G}$ with the usual co-product, co-unit and antipode.
For an irreducible module $V^{q}_{\lambda}$ with an orthonormal basis $\xi_{k},$ $k=1,\dots, m=\dim V_{\lambda}^{q},$ it follows from the definition of $\mathbb{C}[\mathrm{G]_{q}}$ that for $i,k=1,\dots, m,$
\begin{gather}
\Delta_{q}(C_{\xi_{i},\xi_{k}}^{\lambda})=\sum_{j=1}^{m}C_{\xi_{i},\xi_{j}}^{\lambda}\otimes C_{\xi_{j},\xi_{k}}^{\lambda},\label{xi1}\\
\epsilon_{q}(C_{\xi_{i},\xi_{k}}^{\lambda})=C_{\xi_{i},\xi_{k}}^{\lambda}(I)=\langle\xi_{i},\xi_{k}\rangle=\delta_{ik}1.\label{xi2}\\
\begin{array}{ccc}
\sum_{i=1}^{m}(C_{\xi_{i},\xi_{k}}^{\lambda})^{*}C_{\xi_{i},\xi_{j}}^{\lambda}=\delta_{km}I, & k,j=1,\dots, m.\label{xi3}
\end{array}
\end{gather}
To deduce~\eqref{xi3}, consider any $a\in \mathrm{U_{q}}(\mathfrak{g}),$ then it follows from the Hopf algebra axioms that (noticing that $\overline{C_{\eta,\xi}^{\lambda}(a)}=C_{\xi,\eta}^{\lambda}(a^{*})$ and hence $(C_{\eta,\xi}^{\lambda})^{*}(a)=C_{\xi,\eta}^{\lambda}(\hat S_{q}(a))$)
$$
\sum_{i=1}^{m}(C_{\xi_{i},\xi_{k}}^{\lambda})^{*}C_{\xi_{i},\xi_{j}}^{\lambda}(a)=\sum_{i=1}^{m}((C_{\xi_{i},\xi_{k}}^{\lambda})^{*}\otimes C_{\xi_{i},\xi_{j}}^{\lambda})(\hat\Delta_{q}(a))=
$$
$$
=\sum_{i=1}^{m}(C_{\xi_{k},\xi_{i}}^{\lambda}\otimes C_{\xi_{i},\xi_{j}}^{\lambda})((\hat S_{q}\otimes \iota)\hat\Delta_{q}(a))=
$$
$$
=C^{\lambda}_{\xi_{k},\xi_{j}}(\sum \hat S_{q}(a_{(1)})a_{(2)})=C^{\lambda}_{\xi_{k},\xi_{j}}(\hat \epsilon_{q}(a))=\delta_{kj}\hat\epsilon_{q}(a)I,
$$
where $\hat \Delta_{q}(a)=\sum a_{(1)}\otimes a_{(2)}$ in the Sweedler notation.
We define inductively 
$$
\Delta_{q}^{(2)}=\Delta_{q}.
$$
\begin{equation}\label{deltapower}\Delta_{q}^{(n)}=(\underset{\text{$n-1$ terms}}{\underbrace{\Delta_{q}\otimes \iota\otimes\cdots\otimes \iota}})\circ \Delta^{(n-1)}_{q}:\mathbb{C}[\mathrm{G]_{q}}\longrightarrow \underset{\text{$n$ terms}}{\underbrace{\mathbb{C}[\mathrm{G]_{q}}\otimes \cdots \otimes \mathbb{C}[\mathrm{G]_{q}}}}\end{equation}

Notice that by co-associativity $(\Delta_{q}\otimes \iota)\circ \Delta_{q}=(\iota\otimes \Delta_{q})\circ \Delta_{q}$, it does not matter which tensor factor you apply $\Delta_{q}$ to in~\eqref{deltapower}. 
\\

Let us denote by $C(\mathrm{G)_{q}}$ the universal enveloping $C^{*}$-algebra of $\mathbb{C}[\mathrm{G]_{q}}.$ It is known from~\cite{ks} that the universal enveloping $C^{*}$-algebra exists and that the natural homomorphism $\mathbb{C}[\mathrm{G]_{q}}\hookrightarrow C(\mathrm{G)_{q}}$ is injective. Hence we can identify $\mathbb{C}[\mathrm{G]_{q}}$ with its inclusion $\mathbb{C}[\mathrm{G]_{q}}\subseteq C(\mathrm{G)_{q}}.$ Moreover, the co-product can be extended to a $*$-homomorphism $\Delta_{q}:C(\mathrm{G)_{q}}\to C(\mathrm{G)_{q}}\otimes C(\mathrm{G)_{q}}$ (the minimal tensor product), giving a structure of $C(\mathrm{G)_{q}}$ as a compact quantum group in the sense of Woronowicz~\cite{wz2}. We will use the same symbol for a $*$-representation of $\mathbb{C}[\mathrm{G]_{q}}$ as well as its extension to $C(\mathrm{G)_{q}}.$
\\

Recall the special case of $\mathrm{SU_{2}}.$ Let $V^{q}_{\lambda}$ be the unique $2$-dimensional $\mathrm{U_{q}}(\mathfrak{su}_{2})$-module, with basis $\xi_{\lambda}=:\xi_{1},\xi_{-\lambda}=:\xi_{2}$ and let 
$$
\begin{array}{ccc}
t_{ij}=C^{\lambda}_{\xi_{i},\xi_{j}}, &\text{for $i,j=1,2.$}
\end{array}
$$
Then the elements $t_{ij}$ generate $\mathbb{C}[\mathrm{SU}_{2}]_{q}$ as an algebra, and they are subject to the relations
\begin{equation}\label{su2q}
\begin{array}{cc}
\begin{array}{cccc}
t_{11}t_{21}=qt_{21}t_{11}, & t_{11}t_{12}=qt_{12}t_{11}, & t_{12}t_{21}=t_{21}t_{12},
 \end{array}\\
\begin{array}{cccc}
t_{22}t_{21}=q^{-1}t_{21}t_{11}, & t_{22}t_{12}=q^{-1}t_{12}t_{22},
\end{array}\\
\begin{array}{cccc}
t_{11}t_{22}-t_{22}t_{11}=(q-q^{-1})t_{12}t_{21}, & t_{11}t_{22}-q t_{12}t_{21}=1,
\end{array}\\
\begin{array}{cccc}
t^{*}_{11}=t_{22}, & t_{12}^{*}=-q t_{21}
\end{array}
\end{array}
\end{equation}
Moreover, the relations~\eqref{su2q} determine $\mathbb{C}[\mathrm{SU_{2}}]_{q},$ in the sense that this algebra is isomorphic to the universal $*$-algebra with generators $\hat t_{ij}$, $i,j=1,2,$ satisfying the relations~\eqref{su2q}.  
\\

Given two $*$-representations $\pi_{1},\pi_{2},$ such that $\pi_{i}:C(\mathrm{G)_{q}}\to \mathcal{B}(\Hh_{i})$ for $i=1,2,$ we can define the tensor product using the co-multiplication as
\begin{equation}\label{hopftensor}
\pi_{1}\boxtimes\pi_{2}:=(\pi_{1}\otimes \pi_{2})\circ \Delta_{q}:C(\mathrm{G)_{q}}\longrightarrow  \mathcal{B}(\Hh_{1})\otimes \mathcal{B}(\Hh_{2})\subseteq  \mathcal{B}(\Hh_{1}\otimes \Hh_{2})
\end{equation}
where $\otimes$ denotes the minimal tensor product between $C^{*}$-algebras. We will also use $\otimes$ to denote the algebraic tensor product; it will always be clear from context which one we use (e.g. we will never take the algebraic tensor product between two $C^{*}$-algebras).
\subsection{\bf\textsc{Representation Theory of $\mathbb{C}[\mathrm{G]_{q}}$}}
Recall a $*$-representation $\Pi_{q}$ of $\mathbb C[\mathrm{SU_2]_q}$  defined in the following way:
let $C_q,S,d_q:\ell^2(\mathbb Z_+)\to\ell^2(\mathbb Z_+)$ be the operators defined on the natural orthonormal basis $\{e_{j}\}_{j\in \mathbb{Z}_{+}}$ as follows:
\begin{equation}\label{SC}
Se_n=e_{n+1},\quad C_q e_n=\sqrt{1-q^{2n}}e_n,\quad d_q e_n=q^ne_n.
\end{equation}

Then the map 
\begin{equation}\label{SC1}
\Pi_{q}(t_{1,1})=S^*C_q,\  \Pi_{q}(t_{1,2})=q d_q,\  \Pi_{q}(t_{2,1})=-d_q,\  \Pi_{q}(t_{2,2})=C_q S
\end{equation}
extends to a $*$-representation of $\mathbb C[\mathrm{SU_2]_q}.$ Let $C^{*}(S)\subseteq \mathcal{B}(\ell^{2}(\mathbb{Z}_{+}))$ be the $C^{*}$-algebra generated by $S$ (so that $C^{*}(S)$ is equal to the Toeplitz algebra). From the expressions~\eqref{SC} for $C_{q}$ and $d_q$ it is easy to see that $C_{q},d_{q}\in C^{*}(S)$ and hence that $$\Pi_{q}(\mathbb C[\mathrm{SU_2]_q})\subseteq C^{*}(S).$$
We recall the $*$-representation theory for $\mathbb{C}[\mathrm{G]_{q}},$ $q\in (0,1),$ due to Soibelman~\cite{ks}: For each simple root $\alpha_{i}\in \Omega$ we get an injective Hopf $*$-homomorphism $\mathrm{U_{q_{i}}}(\mathfrak{su}_{2})\to \mathrm{U_{q}}(\mathfrak{g})$ that dualizes to a surjective Hopf $*$-homomorphism
$$
\begin{array}{cccc}
\varsigma_{i}^{q}:\mathbb{C}[\mathrm{G]_{q}}\longrightarrow \mathbb{C}[\mathrm{SU_{2}]_{q_{i}}},  & i=1,\dots, n
\end{array}
$$
(this is true also for $q=1,$ we write $\varsigma_{i}=\varsigma_{i}^{1}$). For $i=1,\dots, n,$ we define $*$-representations $$\pi_{i}^{q}:=\Pi_{q_{i}}\circ \varsigma_{i}^{q}:\mathbb{C}[\mathrm{G]_{q}}\longrightarrow C^{*}(S)\subseteq \mathcal{B}(\ell^{2}(\mathbb{Z}_{+})).$$
Let $w\in W$, with a reduced presentation $w=s_{j_{1}}\cdots s_{j_{m}}$ (and hence $m=\elll(w)$), and define 
\begin{equation}
\pi_{w}^{q}:=\pi_{j_{1}}^{q}\boxtimes \cdots \boxtimes \pi_{j_{m}}^{q}:\mathbb{C}[\mathrm{G]_{q}}\longrightarrow C(S)^{\otimes \elll(w)}\subseteq\mathcal{B}(\ell^{2}(\mathbb{Z}_{+})^{\otimes \elll(w)})
\end{equation}
when $e\in W$ is the identity element, we let $\pi_{e}=\epsilon_{q}$ (corresponding to the empty reduced presentation).
From~\cite{ks}, we know that $\pi_{w}^{q}$ does not depend on the reduced decomposition, in the sense that if we have two reduced presentations $w=s_{j_{1}}\cdots s_{j_{m}}=s_{j'_{1}}\cdots s_{j'_{m}},$ then the two corresponding $*$-representations
$$
\begin{array}{cccc}
\pi_{j_{1}}^{q}\boxtimes \cdots \boxtimes \pi_{j_{m}}^{q}, & \pi_{j'_{1}}^{q}\boxtimes \cdots \boxtimes \pi_{j'_{m}}^{q}
\end{array}
$$
are unitarily equivalent. For simplicity, let us write $$\Hh_{w}:=\ell^{2}(\mathbb{Z}_{+})^{\otimes \elll(w)}.$$ Thus $\pi_{w}^{q}$ is a $*$-representation $\mathbb{C}[\mathrm{G]_{q}}\to \mathcal{B}(\Hh_{w}).$ We have a subgroup $\mathrm T\subseteq G$ of a maximal torus, corresponding to the real sub-algebra $\mathfrak{t}\subseteq \mathfrak{g}.$ Let $\omega_{i},$ $i=1,\dots,n,$ be the fundamental weights for $\mathfrak g.$ We have an isomorphism  $\mathrm T\cong \mathbb{T}^{n},$ given by
\begin{equation}\label{TnT}
\begin{array}{cccc}
t=e^{x}\in \mathrm T\mapsto (e^{\omega_{1}(x)},\dots,e^{\omega_{n}(x)})\in \mathbb{T}^{n},
\end{array}
\end{equation}
where $x\in\mathfrak t.$ For every $t\in\mathrm T,$ we have mutually non-equivalent one-dimensional $*$-representations $\chi_{t}:\mathbb{C}[\mathrm{G]_{q}}\to \mathbb{C},$ such that, for $s,t\in\mathrm T,$ we have $\chi_{s}\boxtimes \chi_{t}=\chi_{st}$ and $\chi_{t}\boxtimes \chi_{t^{-1}}=\chi_{1}=\epsilon_{q}.$ By~\cite{nt}, we can for every $t\in \mathrm T$ associate a unitary operator $U_{t}\in \prod_{\lambda \in P_{+}}\mathcal{B}(V^{q}_{\lambda}),$ such that if $x\in \mathfrak t$ satisfies $t=e^{x},$ then for all $\lambda \in P_{+}$ and $\gamma\in P$
\begin{gather}
\begin{array}{cccc}
\langle U_{t}\eta, \xi \rangle=\chi_{t}(C_{\eta,\xi}^{\lambda}), & \eta,\xi\in V_{\lambda}^{q},
\end{array}\label{maxtor}\\
\begin{array}{ccc}
U_{t}\eta=e^{\gamma(x)}\eta, & \eta\in V_{\lambda}^{q}(\gamma).
\end{array}\label{maxtor2}
\end{gather}

The following theorem characterizes all irreducible $*$-representations of $\mathbb{C}[\mathrm{G]_{q}}$ (up to unitary equivalence). 
\begin{thm}[{Theorem~6.2.7 in~\cite{ks}}]

\begin{enumerate}[(i)]
\item For $w\in W$ and $t\in \mathrm T,$ the $*$-representations $\pi_{w}^{q}\boxtimes \chi_{t}$ are irreducible and mutually non-equivalent, and
\item every irreducible $*$-representation $\pi$ of $\mathbb{C}[\mathrm{G]_{q}}$ is unitarily equivalent to some $\pi_{w}\boxtimes \chi_{t},$ $w\in W,$ $t\in \mathrm T.$
\end{enumerate}
\end{thm}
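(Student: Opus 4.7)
The approach is to prove (i) by induction on $\elll(w)$ (using both that $\pi_w^q$ is irreducible and that its image contains the compact operators), and then to prove (ii) by reducing an arbitrary irreducible $*$-representation to the claimed form via the surjections $\varsigma_i^q$.

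For the irreducibility part of (i), I would start with the base cases. When $\elll(w)=0$, the $\chi_t$ are one-dimensional and hence trivially irreducible. When $\elll(w)=1$, the representation $\pi_i^q=\Pi_{q_i}\circ\varsigma_i^q$ acts on $\ell^2(\mathbb{Z}_+)$ through the Toeplitz algebra $C^*(S)$; explicit computation with the generators in \eqref{SC}--\eqref{SC1} shows that the image of $\Pi_{q_i}$ contains a rank-one projection (onto $e_0$), and together with $S$ this generates an irreducible action, so $\pi_i^q$ is irreducible and its image contains $\K(\ell^2(\mathbb{Z}_+))$. For the induction step, write $w=s_j w'$ with $\elll(w')=\elll(w)-1$, so $\pi_w^q=\pi_j^q\boxtimes\pi_{w'}^q$. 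Using the triangular form of $\Delta_q$ on matrix coefficients $C^\lambda_{\eta,\xi}$ with $\xi$ a weight vector, I would show that $(\pi_j^q\boxtimes\pi_{w'}^q)(\mathbb{C}[\mathrm{G}]_q)$ contains both $\K(\ell^2(\mathbb{Z}_+))\otimes I$ (arising from matrix coefficients that restrict to $\pi_j^q$ on the first factor and the identity on the second) and $I\otimes\pi_{w'}^q(\mathbb{C}[\mathrm{G}]_q)$; combined with the induction hypothesis, this forces irreducibility. Twisting by $\chi_t$ clearly preserves irreducibility since it acts by scalars.

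For mutual non-equivalence, I would use Lemma~\ref{lem1} and Corollary~\ref{vnotw}. Applied to the $C_{\xi_{v\cdot\lambda},\xi_{w\cdot\lambda}}^\lambda$ with $\lambda\in P_{++}$, the kernel structure of $\pi_w^q$ selects out those pairs $(v,w')$ that are compatible with the Bruhat order, so one reads off the element $w$ from $\pi_w^q$ alone. Having determined $w$, the twist $t$ is then recovered from the action on the one-dimensional weight space for $\xi_{w\cdot\lambda}$ via the formula \eqref{maxtor}--\eqref{maxtor2}, since distinct $t\in\mathrm{T}$ give distinct characters on $\mathbb{C}[\mathrm{G}]_q$.

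For (ii), the plan is to induct on a Weyl-group-valued complexity parameter. Given an irreducible $*$-representation $\pi$ on $\mathcal{H}$, examine the restrictions $\pi_i:=\pi\circ(\varsigma_i^q)$-type factors: either the pullback of some non-trivial irreducible of $\mathbb{C}[\mathrm{SU}_2]_{q_i}$ via $\varsigma_i^q$ appears as a tensor factor of $\pi$, in which case I would use the classification of irreducibles of $\mathbb{C}[\mathrm{SU}_2]_q$ (either $\Pi_q$ or a circle character) together with the co-product to write $\pi\cong\pi_i^q\boxtimes\pi'$ for some irreducible $\pi'$ of shorter complexity, and apply induction; or all such restrictions yield one-dimensional characters, forcing $\pi$ itself to be one-dimensional and hence of the form $\chi_t$. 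The main obstacle is exactly this factorization step: producing the tensor decomposition $\pi\cong\pi_i^q\boxtimes\pi'$ from an abstract irreducible requires identifying a concrete $\pi_i^q$-invariant subspace on which to implement the SU$_2$-quotient, which is the technical heart of Soibelman's original argument in \cite{ks}. Once this reduction is established, the induction closes and exhausts the list in~(i).
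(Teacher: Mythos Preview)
The paper does not prove this theorem at all: it is quoted verbatim as Theorem~6.2.7 from~\cite{ks} and used as a black box throughout. There is therefore nothing in the present paper to compare your proposal against.

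That said, your sketch is broadly in the spirit of Soibelman's original argument in~\cite{ks} (and~\cite{soil}), which also proceeds by reduction to the $\mathrm{SU}_2$ case via the surjections $\varsigma_i^q$ and induction on length. Two remarks. First, your induction step for irreducibility in (i) is too loose as stated: it is not true in general that the image of $\pi_j^q\boxtimes\pi_{w'}^q$ contains $\K(\ell^2(\mathbb{Z}_+))\otimes I$ or $I\otimes\pi_{w'}^q(\mathbb{C}[\mathrm{G}]_q)$ as subalgebras---what one actually shows is that certain specific matrix coefficients (namely $C^\lambda_{w\cdot\lambda,\lambda}$ for $\lambda\in P_{++}$) map under $\pi_w^q$ to compact operators with dense range, from which irreducibility follows once one knows the image contains the compacts. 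This is closer to the content of Lemma~\ref{lem2} and Lemma~\ref{lem3} in the present paper than to the tensor-factor argument you outline. Second, you correctly identify the genuine difficulty in (ii): the factorization $\pi\cong\pi_i^q\boxtimes\pi'$ is the substantive step, and your proposal does not supply it---you defer to~\cite{ks} at exactly the point where the work lies. So your proposal is an accurate outline of the strategy but not a self-contained proof; since the paper itself makes no claim to prove the result, this is not a defect relative to the paper.
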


\section{\bf\textsc{Properties of $C(\mathrm{G)_{q}}$ under $*$-Representations}}
\subsection{\bf\textsc{Paths in the Weyl Group and Subsets of $\mathrm T$}}
For elements $v,w\in W,$ we write $v\lhd w$ to mean that
\begin{enumerate}[(i)]
\item $v<w$ in the Bruhat order,
\item there is no $r\in W,$ such that $v<r<w.$
\end{enumerate}
By Theorem~2.2.6 in~\cite{abfb}, this means that there is a $\alpha\in \Phi_{+}$ such that $v\alpha=w$ and $l(v)=l(w)-1.$ Keeping with the established terminology, we also say that $w$ \textit{covers} $v$ if $v\lhd w.$
In general, we write $v\lhd^{(k)} w$ if we have elements $r_{1},\dots,r_{k-1}\in W$ such that 
$$
v\lhd r_{1}\lhd\dots\lhd r_{k-1}\lhd w.
$$
It is a property of $W$ that every chain 
$$
v\lhd r \lhd \dots\lhd w
$$
must have the same length and hence that the relation $\lhd^{(k)}$ is actually well-defined for $k>1$ (see Theorem~2.2.6 in~\cite{abfb}).
\\

Let $v,w\in W.$ If $v\lhd w,$ we also write $v\overset{\gamma}{\leadsto}w,$ for a $\gamma\in \Phi_{+},$ if $v\gamma =w.$ In general, if $v\leq w,$ then we say that we have a path from $v$ to $w$
\begin{equation}\label{paths}
v=v_{1}\overset{\gamma_{1}}{\leadsto}v_{2}\overset{\gamma_{2}}{\leadsto}\dots\overset{\gamma_{m-1}}{\leadsto}v_{m}\overset{\gamma_{m}}{\leadsto}v_{m+1}=w
\end{equation}
if $v_{j}\lhd v_{j+1}$ and $v_{j}\gamma_{j+1}=v_{j+1}$ for $j=1,\dots,m.$ Clearly, every path from $v$ to $w$ has the same length $m=l(w)-l(v).$ 
We write~\eqref{paths} as the composition of paths
\begin{equation}
\begin{array}{cccc}
v\overset{\gamma}{\leadsto} w, & \text{for $\gamma=\gamma_{1}\circ\dots\circ \gamma_{m}$}
\end{array}
\end{equation}
to indicate that we have a specific path between $v$ and $w.$
\\

For each path $v\overset{\gamma}{\leadsto}w$ we associate a closed connected subgroup $\mathrm T_{\gamma}\subseteq \mathrm T$ by taking the exponential of the real subspace of $\mathfrak{t}$ spanned by $h_{\gamma_{i}}$ for $i=1,\dots,m.$ For the path $v \overset{\gamma_{1}}{\leadsto}r  \overset{\gamma_{2}}{\leadsto} w$ and $\gamma =\gamma_2\circ\gamma_1,$ we have \begin{equation}\label{gamma}\mathrm T_{\gamma_{2}}\mathrm  T_{\gamma_{1}}:=\{ts:t\in \mathrm T_{\gamma_{2}},s\in \mathrm T_{\gamma_{1}}\}=\mathrm T_{\gamma}.\end{equation} Let $\mathrm T_{v}^{w}$ be the union of all the subgroups of $T$ generated by paths from $v$ to $w.$ From~\eqref{gamma}, it follows that we have the following multiplicative property:
If $v\leq r\leq w$ then $$\mathrm T_{v}^{r}\mathrm T_{r}^{w}\subseteq \mathrm T_{v}^{w}.$$
Clearly, it follows from~\eqref{paths} that for $v<w,$ we have
\begin{equation}\label{mult}
\mathrm T_{v}^{w}=\underset{{v<r\lhd w}}{\cup} \mathrm T_{v}^{r}\mathrm T_{r}^{w},
\end{equation}
where the union ranges over all $r\in W$ such that $v<r\lhd w.$ For later use, we also note the following special case of~\eqref{gamma}: if $v\overset{\gamma}{\leadsto}r\lhd w$ with $r \alpha= w$  then
\begin{equation}\label{gamma1}
\mathrm T_{\gamma}\mathrm T_{r}^{w}=\mathrm T_{\alpha\circ \gamma}.
\end{equation}
\subsection{\bf\textsc{Ideals and Quotients}}
 We will use the following results from~\cite{nt} (though stated in a less general fashion in order to suit our purposes).
\begin{thm}[Theorem~$4.1$ $(ii)$ in~\cite{nt}]\label{ntthm}
Let $\sigma\in W$ and $\mathrm Y\subseteq \mathrm T.$ For any $r\in W$ and $t\in T,$ the kernel of the representation $\pi_{r}^{q}\boxtimes \chi_{t}$ contains the intersection of the kernels of the representations $\pi_{\sigma}^{q}\boxtimes \chi_{s},$ $s\in Y$ of $C(\mathrm{G)_{q}}$ if and only if $r\leq \sigma$ and $t\in \overline{\mathrm Y}\mathrm T_{r}^{\sigma}.$
\end{thm}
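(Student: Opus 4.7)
The plan is to establish the two directions separately, drawing on Soibelman's classification, Lemma~\ref{lem1} and Corollary~\ref{vnotw}, and the explicit shape~\eqref{SC1} of the building-block representation $\Pi_q$. In both directions I use the standard fact that the kernel-containment statement is equivalent to weak containment of $\pi_r^q\boxtimes\chi_t$ in the family $\{\pi_\sigma^q\boxtimes\chi_s:s\in Y\}$.

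For sufficiency, assume $r\leq\sigma$ and $t\in\overline Y\,T_r^\sigma$; write $t=\lim_n t_{0,n}t_{1,n}$ with $t_{0,n}\in Y$ and $t_{1,n}\in T_r^\sigma$. Using~\eqref{mult} inductively, one reduces to a single cover $r\lhd\sigma$ with $r\alpha=\sigma$. I would then choose a reduced expression $\sigma=s_{j_1}\cdots s_{j_m}$ such that omitting some factor $s_{j_k}$ yields a reduced expression for $r$ (possible by the subword property of the Bruhat order, and usable thanks to the fact from~\cite{ks} that $\pi_\sigma^q$ is independent of the reduced decomposition up to unitary equivalence). Then $\pi_\sigma^q$ is a Hopf tensor product containing an extra factor $\pi_{j_k}^q=\Pi_{q_{j_k}}\circ\varsigma_{j_k}^q$ landing in the Toeplitz algebra $C^*(S)$. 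Since $C^*(S)/\K\cong C(\mathbb T)$ and the formulas~\eqref{SC1} realise the point evaluations on $\mathbb T$ as characters $\chi_{t_1}$, $t_1\in T_\alpha$, one obtains $\pi_r^q\boxtimes\chi_{t_1}$ (possibly after conjugation by a torus automorphism) as a quotient of $\pi_\sigma^q$. Iterating via~\eqref{gamma} and~\eqref{gamma1} covers arbitrary $t_1\in T_r^\sigma$, and norm-continuity of $t\mapsto\chi_t$ absorbs the factor $t_0\in\overline Y$ by passing to limits of kernels.

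For necessity I argue the contrapositive in two cases. If $r\not\leq\sigma$, pick any $\lambda\in P_{++}$; by Corollary~\ref{vnotw} one has $V_\lambda^q(r\cdot\lambda)\perp \mathrm{U_q}(\mathfrak b)V_\lambda^q(\sigma\cdot\lambda)$. Iterating $\hat\Delta_q$ keeps the relevant vectors of $\pi_\sigma^q$ in $\mathrm{U_q}(\mathfrak b)\xi_{\sigma\cdot\lambda}$, so the matrix coefficient $C^\lambda_{\xi_{\sigma\cdot\lambda},\xi_{r\cdot\lambda}}$ (suitably twisted by the torus action) annihilates every $\pi_\sigma^q\boxtimes\chi_s$ yet is nonzero on the cyclic vector $e_0^{\otimes \ell(r)}$ of $\pi_r^q\boxtimes\chi_t$, contradicting the assumed kernel containment. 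If $r\leq\sigma$ but $t\notin\overline Y\,T_r^\sigma$, I would combine Pontryagin duality on the compact abelian group $\mathrm T$ (selecting characters separating $t$ from the closed subset $\overline Y\,T_r^\sigma$) with the right-$\mathrm T$ Peter--Weyl decomposition of $\mathbb C[\mathrm G]_q$, using~\eqref{maxtor} and~\eqref{maxtor2} to realise such separating characters as linear combinations of matrix coefficients $C^\lambda_{\eta,\eta}$ on torus weight vectors, to produce an element annihilating every $\pi_\sigma^q\boxtimes\chi_s$, $s\in Y$, while acting nontrivially on $\pi_r^q\boxtimes\chi_t$.

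The main obstacle is the torus step of the necessity direction. Pointwise separation of $t$ from $\overline Y\,T_r^\sigma$ in $\mathrm T$ is automatic from compactness, but promoting it to a concrete element of $\mathbb C[\mathrm G]_q$ distinguishing the two families of representations requires a precise identification of the right-$\mathrm T$ spectrum of $\pi_\sigma^q(\mathbb C[\mathrm G]_q)$ modulo the ideal generated by $\pi_{\sigma'}^q$ for $\sigma'<\sigma$. Showing that this spectrum matches $T_e^\sigma$, so that the available torus shifts are precisely those in $T_r^\sigma$ under the recursive path structure of~\eqref{mult} and~\eqref{gamma1}, is where the Bruhat-order combinatorics of $W$ genuinely enters.
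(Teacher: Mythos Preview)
The paper does not prove this statement at all: it is quoted as Theorem~4.1(ii) of Neshveyev--Tuset~\cite{nt} and used as a black box in Section~3 (in the proofs of Lemma~\ref{lem3} and Proposition~\ref{prop2}). There is therefore no in-paper proof to compare your proposal against; the result belongs to~\cite{nt}, and its proof there relies on a detailed analysis of the primitive ideal space of $C(\mathrm G)_q$ that goes well beyond the tools assembled in the present paper.

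On the substance of your sketch: the sufficiency direction is essentially the right idea (reduce to a single cover via the subword property, collapse the extra Toeplitz factor to $C(\mathbb T)$, then commute the resulting torus character past the remaining $\pi_j^q$'s), though you should be explicit that moving $\chi_{t_1}$ from an interior tensor slot to the right end is exactly what produces the subgroup $T_\gamma$ rather than an arbitrary circle. In the necessity direction for $r\not\leq\sigma$ your witness element is misindexed: the element that works is $C^\lambda_{r\cdot\lambda,\lambda}$ (compare Lemma~\ref{lem2}(ii) together with Corollary~\ref{vnotw} and Lemma~\ref{maxtau}(iii)), not $C^\lambda_{\sigma\cdot\lambda,r\cdot\lambda}$. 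The torus step you flag as the main obstacle is indeed the crux, and your outline does not yet supply the missing ingredient: one needs to know that the set of $t$ for which $\pi_r^q\boxtimes\chi_t$ is weakly contained in $\pi_\sigma^q\boxtimes\chi_s$ is \emph{exactly} $sT_r^\sigma$, not merely contained in it, and this requires the identification of the Jacobson topology on the primitive spectrum carried out in~\cite{nt}.
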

\begin{lem}[Lemma~$4.5$ in~\cite{nt}]\label{ntlem}
Let $t\in \mathrm T$ and let $w\in W$. Assume $x\in C(\mathrm{G)_{q}}$ is such that $(\pi_{v}^{q}\boxtimes \chi_{s})(x)=0$ for all $v\in W$ such that $v<w$ and $s\in t \mathrm T_{v}^{w},$ then 
\begin{equation}\label{compact}
(\pi_{w}^{q}\boxtimes\chi_{t})(x)\in \K_{w}.
\end{equation}
\end{lem}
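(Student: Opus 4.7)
The plan is to show that $(\pi_w^q \boxtimes \chi_t)(x)$ lies in the minimal ideal of the image $C^*$-algebra by verifying membership in each of several ideals whose intersection equals $\mathcal{K}_w$. Fix a reduced presentation $w = s_{j_1}\cdots s_{j_m}$, so that $\pi_w^q$ takes values in the Toeplitz tensor power $C^*(S)^{\otimes m} \subseteq \mathcal{B}(\mathcal{H}_w)$. For each $k \in \{1,\dots,m\}$, let
\[
J_k := C^*(S)^{\otimes (k-1)} \otimes \mathcal{K} \otimes C^*(S)^{\otimes (m-k)}
\]
be the closed ideal of operators that are compact in the $k$-th factor. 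Since $C^*(S)$ is nuclear, one has $\bigcap_{k=1}^{m} J_k = \mathcal{K}(\ell^2(\mathbb{Z}_+))^{\otimes m} = \mathcal{K}_w$, so it is enough to prove that $(\pi_w^q \boxtimes \chi_t)(x) \in J_k$ for each $k$.

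To handle a fixed $k$, I would use the Toeplitz extension $0 \to \mathcal{K} \to C^*(S) \to C(\mathbb{T}) \to 0$ to identify the quotient $C^*(S)^{\otimes m}/J_k$ with $C^*(S)^{\otimes (k-1)} \otimes C(\mathbb{T}) \otimes C^*(S)^{\otimes (m-k)}$. Composing $\pi_w^q \boxtimes \chi_t$ with this quotient and evaluating the central $C(\mathbb{T})$ at a parameter $s$ running through the one-parameter subgroup $\mathrm T_{\alpha_{j_k}} \subseteq \mathrm T$ attached to the simple root $\alpha_{j_k}$, produces the family of representations
\[
\rho_s := \pi_{j_1}^{q} \boxtimes \cdots \boxtimes \pi_{j_{k-1}}^{q} \boxtimes \chi_s \boxtimes \pi_{j_{k+1}}^{q} \boxtimes \cdots \boxtimes \pi_{j_m}^{q} \boxtimes \chi_t.
\]
By Gelfand duality on the $C(\mathbb{T})$ factor, the image of $x$ modulo $J_k$ vanishes iff $\rho_s(x) = 0$ for every $s \in \mathrm T_{\alpha_{j_k}}$.

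The central algebraic identification is now that $\rho_s$ agrees with $\pi_{v}^{q} \boxtimes \chi_{t\sigma}$ for some $v \in W$ with $v < w$ and some $\sigma \in \mathrm T_v^w$. The Weyl element $v$ is the one represented by the subword $s_{j_1}\cdots s_{j_{k-1}} s_{j_{k+1}} \cdots s_{j_m}$; when this word is reduced it gives $v \lhd w$ directly, and otherwise an iterated reduction (via the Soibelman classification of irreducibles) produces a shorter $v$ while absorbing the surplus into further twists inside $\mathrm T_v^w$. Pushing $\chi_s$ past $\pi_{j_{k+1}}^{q} \boxtimes \cdots \boxtimes \pi_{j_m}^{q}$ through $\Delta_q$ transforms the character parameter from $s \in \mathrm T_{\alpha_{j_k}}$ into a parameter in $\mathrm T_\gamma$, where $\gamma = s_{j_m}\cdots s_{j_{k+1}}(\alpha_{j_k})$ is the positive root such that $v \gamma = w$, as guaranteed by the strong exchange condition (Theorem~2.2.6 in~\cite{abfb}); by~\eqref{gamma1}, $\mathrm T_\gamma \subseteq \mathrm T_v^w$. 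The hypothesis of the lemma applied to this $v$ and the twisted torus element $t\sigma \in t\mathrm T_v^w$ then yields $\rho_s(x) = 0$, so the image of $(\pi_w^q \boxtimes \chi_t)(x)$ in $C^*(S)^{\otimes m}/J_k$ vanishes and hence $(\pi_w^q \boxtimes \chi_t)(x) \in J_k$.

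The main obstacle will be the third step: carefully tracking how the character $\chi_s$ inserted in position $k$ interacts, via the coproduct $\Delta_q$, with the surrounding Soibelman tensor factors, so as to identify $\rho_s$ with a representation of the canonical form $\pi_v^q \boxtimes \chi_{t\sigma}$ and to verify that the new torus parameter indeed lies in $t\mathrm T_v^w$ rather than in some larger subset of $\mathrm T$. The case where the deletion of $s_{j_k}$ produces a non-reduced word requires particular care: one must iterate the reduction and check that at every stage the additional torus twist accumulates inside $\mathrm T_v^w$, using the multiplicative property $\mathrm T_v^r \mathrm T_r^w \subseteq \mathrm T_v^w$ from the outline preceding Lemma~\ref{ntlem}.
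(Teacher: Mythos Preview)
The paper does not give its own proof of this lemma: it is quoted verbatim as Lemma~4.5 of Neshveyev--Tuset~\cite{nt} and used as a black box, so there is no in-paper argument to compare your proposal against.

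For what it is worth, your outline is essentially the argument in the original source~\cite{nt}. One reduces to showing that $(\pi_w^q\boxtimes\chi_t)(x)$ lies in each ideal $J_k$ by composing with the Toeplitz quotient in the $k$-th slot, and then identifies the resulting family of representations with representations of the form $\pi_v^q\boxtimes\chi_{t\sigma}$ for $v<w$ and $\sigma\in\mathrm T_v^w$. Your identification of the relevant one-parameter subgroup as $\mathrm T_{\alpha_{j_k}}$ is correct (this is exactly the computation around~\eqref{omegai}, since $\omega_j(h_{\alpha_i})=\delta_{ij}$), and so is the formula $\gamma=s_{j_m}\cdots s_{j_{k+1}}(\alpha_{j_k})$ for the positive root with $v\gamma=w$ when the deleted word is reduced. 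The only point where your sketch is genuinely incomplete is the one you flag yourself: the commutation rule $\chi_s\boxtimes\pi_j^q\cong\pi_j^q\boxtimes\chi_{s_j\cdot s}$ (with the Weyl group acting on $\mathrm T$) needs to be stated and used explicitly, and the non-reduced case requires an inductive argument on length together with the multiplicativity $\mathrm T_v^r\mathrm T_r^w\subseteq\mathrm T_v^w$. Both are carried out in~\cite{nt}; filling them in here would amount to reproducing that proof.
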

Recall the definition of $C_{\eta,\xi}^{\lambda}\in \mathbb{C}[\mathrm{G]_{q}},$ given by~\eqref{cgq}. To avoid multiple subscripts, let us write $w\cdot \lambda$ in place of $\xi_{w\cdot \lambda}$ in~\eqref{cgq}. Thus, for example, we write $C^{\lambda}_{w\cdot\lambda,\lambda}$ instead of $C^{\lambda}_{\xi_{w\cdot \lambda},\xi_{\lambda}}.$
\begin{lem}[Lemma~2.3 in~\cite{nt}]\label{lem2}
Let $w\in W$ and $\lambda\in P_{+}.$ 
\begin{enumerate}[(i)]
\item $\pi_{w}^{q}(C_{w\cdot\lambda,\lambda}^{\lambda})$ is a compact contractive diagonalizable operator with zero kernel, and the vector $e_{0}^{\otimes \elll(w)}\in \Hh_{w}$ is its only eigenvector (up to scalar) with an eigenvalue of absolute value $1$.
\item If $\zeta\in V^{q}_{\lambda}$ is orthogonal to $(\mathrm{U_{q}}(\mathfrak{b}))V^{q}_{\lambda}(w\cdot \lambda),$ then $$\pi_{w}^{q}(C_{\zeta,\lambda}^{\lambda})=0.$$
\end{enumerate}
\end{lem}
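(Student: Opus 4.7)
The plan is to prove (ii) first, by induction on $\elll(w)$, and then to deduce (i) by combining (ii) with a concrete computation of the ``extremal'' matrix coefficients in each simple-root factor.

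\textbf{Proof of (ii).} I would induct on $\elll(w)$. The base case $\elll(w)=0$ is immediate: $\pi_e^q=\epsilon_q$ gives $\epsilon_q(C_{\zeta,\lambda}^\lambda)=\langle\zeta,\xi_\lambda\rangle$, which vanishes since $\xi_\lambda\in V_\lambda^q(\lambda)\subseteq \mathrm{U}_q(\mathfrak{b})V_\lambda^q(\lambda)$. For the inductive step, fix a reduced expression $w=s_{i_1}\cdots s_{i_m}$, set $v=s_{i_2}\cdots s_{i_m}$ (reduced, of length $m-1$), and write
$$
\pi_w^q(C_{\zeta,\lambda}^\lambda)=\sum_j \pi_{i_1}^q(C_{\zeta,\xi_j}^\lambda)\otimes \pi_v^q(C_{\xi_j,\lambda}^\lambda)
$$
using~\eqref{xi1} and $\pi_w^q=\pi_{i_1}^q\boxtimes\pi_v^q$, where $\{\xi_j\}$ is an orthonormal weight basis of $V_\lambda^q$ adapted to the $\mathrm{U}_{q_{i_1}}(\mathfrak{su}_2)$-decomposition. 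By the inductive hypothesis on $v$, only $\xi_j\in \mathrm{U}_q(\mathfrak{b})V_\lambda^q(v\cdot\lambda)$ can contribute. For such $\xi_j$ the $\mathrm{U}_{q_{i_1}}(\mathfrak{su}_2)$-submodule of $V_\lambda^q$ generated by $\xi_j$ lies inside $\mathrm{U}_q(\mathfrak{b})V_\lambda^q(w\cdot\lambda)$, because $v\lhd w=s_{i_1}v$ produces a full $F_{i_1}$-ladder of descendants whose weights are all covered by $\mathrm{U}_q(\mathfrak{b})V_\lambda^q(w\cdot\lambda)$. Hence the component of $\zeta$ in that submodule vanishes, and since $\varsigma_{i_1}^q(C_{\zeta,\xi_j}^\lambda)$ only sees $\mathrm{U}_{q_{i_1}}(\mathfrak{su}_2)$-components, $\pi_{i_1}^q(C_{\zeta,\xi_j}^\lambda)=0$.

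\textbf{Proof of (i).} I would iterate the coproduct for a reduced $w=s_{i_1}\cdots s_{i_m}$, writing $v_k=s_{i_k}\cdots s_{i_m}$, so
$$
\pi_w^q(C_{w\cdot\lambda,\lambda}^\lambda)=\sum_{\xi_{j_1},\dots,\xi_{j_{m-1}}}\pi_{i_1}^q(C_{w\cdot\lambda,\xi_{j_1}}^\lambda)\otimes\cdots\otimes\pi_{i_m}^q(C_{\xi_{j_{m-1}},\lambda}^\lambda).
$$
Applying part (ii) to each right-tail $\pi_{v_k}^q$ forces $\xi_{j_{k-1}}\in\mathrm{U}_q(\mathfrak{b})V_\lambda^q(v_k\cdot\lambda)$ on every surviving chain. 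The ``extremal'' chain $\xi_{j_{k-1}}=\xi_{v_k\cdot\lambda}$ contributes a tensor product of factors $\pi_{i_k}^q(C_{\xi_{v_k\cdot\lambda},\xi_{v_{k+1}\cdot\lambda}}^\lambda)$; since $\xi_{v_{k+1}\cdot\lambda}$ is the highest-weight vector and $\xi_{v_k\cdot\lambda}$ the lowest-weight vector of the irreducible $\mathrm{U}_{q_{i_k}}(\mathfrak{su}_2)$-submodule they generate, a direct computation using~\eqref{SC1} identifies this factor as a nonzero scalar multiple of $d_{q_{i_k}}^{n_k}$ for some $n_k\geq 0$. The tensor product of these is a diagonal compact operator on $\Hh_w$ with $e_0^{\otimes m}$ as its unique modulus-one eigenvector. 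Any non-extremal surviving chain must have some $\xi_{j_{k-1}}$ outside the one-dimensional space $V_\lambda^q(v_k\cdot\lambda)$; the corresponding factor then involves $\Pi_{q_{i_k}}(t_{1,1})=S^*C_{q_{i_k}}$ or $\Pi_{q_{i_k}}(t_{2,2})=C_{q_{i_k}}S$, each of which either annihilates $e_0$ or sends it to a multiple of $e_1$ of norm strictly less than $1$. Summing, the full operator is compact, contractive, diagonalizable, has trivial kernel, and has $e_0^{\otimes m}$ as its unique modulus-one eigenvector.

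\textbf{Main obstacle.} The crucial step is the inductive step of (ii): verifying that when $v\lhd w=s_iv$, every $\mathrm{U}_{q_i}(\mathfrak{su}_2)$-submodule generated by an element of $\mathrm{U}_q(\mathfrak{b})V_\lambda^q(v\cdot\lambda)$ sits inside $\mathrm{U}_q(\mathfrak{b})V_\lambda^q(w\cdot\lambda)$. This requires using Lemma~\ref{lem1} (equivalently Corollary~\ref{vnotw}) together with the PBW-type structure of $\mathrm{U}_q(\mathfrak{g})$ to translate Weyl-group covering relations into inclusions of $\mathrm{U}_q(\mathfrak{b})$-orbits of extremal weight vectors; everything else in the argument is ultimately bookkeeping on top of the explicit $\mathrm{SU}_2$-formulas~\eqref{SC1}.
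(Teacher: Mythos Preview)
The paper does not give its own proof of this lemma; it is quoted verbatim from \cite{nt} (Lemma~2.3 there) and used as a black box. So there is no ``paper's proof'' to compare against, only the original argument in Neshveyev--Tuset.

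Your approach to (ii) is the standard one and is essentially correct. The one point you flag as the main obstacle is exactly right: for $\xi_j\in\mathrm{U}_q(\mathfrak{b})\xi_{v\cdot\lambda}\subseteq\mathrm{U}_q(\mathfrak{b})\xi_{w\cdot\lambda}$ you need $F_{i_1}^k\xi_j\in\mathrm{U}_q(\mathfrak{b})\xi_{w\cdot\lambda}$. This follows cleanly from the commutation relations (since $[F_{i_1},E_j]=0$ for $j\neq i_1$ and $[E_{i_1},F_{i_1}]\in\mathrm{U}_q(\mathfrak{b})$, one has $F_{i_1}\mathrm{U}_q(\mathfrak{b})\subseteq\mathrm{U}_q(\mathfrak{b})+\mathrm{U}_q(\mathfrak{b})F_{i_1}$) together with the key fact that $F_{i_1}\xi_{w\cdot\lambda}=0$, which holds because $s_{i_1}w<w$ forces $\langle w\cdot\lambda,\alpha_{i_1}^\vee\rangle\leq 0$.

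Your argument for (i), however, has a genuine gap. You allow ``non-extremal surviving chains'' and then try to argue that the resulting \emph{sum} of tensor products is compact, contractive, diagonalizable with trivial kernel. That deduction is not valid as written: a sum of a diagonal operator and several shift-type operators need not be diagonalizable, and nothing you wrote controls the kernel or the spectrum of the sum. The correct observation --- and what makes the proof work --- is that there \emph{are no} non-extremal surviving chains. In the expansion $\pi_w^q(C^\lambda_{w\cdot\lambda,\lambda})=\sum_{j}\pi_{i_1}^q(C^\lambda_{w\cdot\lambda,\xi_j})\otimes\pi_v^q(C^\lambda_{\xi_j,\lambda})$ with a weight basis $\{\xi_j\}$ adapted to $\mathrm{U}_q(\mathfrak{b})\xi_{v\cdot\lambda}$, part (ii) forces the weight of any surviving $\xi_j$ to be $\geq v\cdot\lambda$, while nonvanishing of $\varsigma_{i_1}^q(C^\lambda_{w\cdot\lambda,\xi_j})$ forces $\xi_j$ to lie in the $\mathrm{U}_{q_{i_1}}(\mathfrak{su}_2)$-string through $\xi_{w\cdot\lambda}$, whose weights are all $\leq v\cdot\lambda$. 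Hence the weight of $\xi_j$ equals $v\cdot\lambda$, and since that weight space is one-dimensional, $\xi_j=\xi_{v\cdot\lambda}$. Iterating, $\pi_w^q(C^\lambda_{w\cdot\lambda,\lambda})$ is a \emph{pure tensor} $\bigotimes_k\pi_{i_k}^q(C^\lambda_{v_k\cdot\lambda,v_{k+1}\cdot\lambda})$, and each factor is (up to a unimodular scalar) a power of $d_{q_{i_k}}$, from which all the claimed properties are immediate.
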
 

When $q=1,$ we get a Hopf $*$-algebra homomorphism $\tau_{1}=\tau:\mathbb{C}[\mathrm{G]}\to \mathbb{C}[\mathrm{T]}$ by restriction to the subgroup $\mathrm{T\subseteq G}$ (here $\mathbb{C}[\mathrm{T]}$ is the Hopf $*$-algebra of trigonometric polynomials on $\mathrm T$). For general $q\in (0,1),$ we define a surjective Hopf $*$-algebra homomorphism $\tau_{q}:\mathbb{C}[\mathrm{G]_{q}}\to \mathbb{C}[\mathrm{T]},$ that extends to a homomorphism of compact quantum groups $\tau_{q}: C(\mathrm{G)_{q}}\to C(\mathrm T)$. We define $\tau_{q}$ in the following way:
The compact operators $\K\subseteq \mathcal{B}(\ell^{2}(\mathbb{Z}_{+}))$ is a $*$-ideal in $C^*(S),$ and it is well known that we have the isomorphism $p:C^*(S)/\K\mapsto C(\mathbb{T})$ such that $S^{*}+\K\mapsto z$ (here $z\in C(\mathbb{T})$ is the coordinate function). Moreover, it is easy to see that we actually have a homomorphism of Hopf $*$-algebras $\beta_{q}:\mathbb C[\mathrm{SU_2]_q}\to \mathbb{C}[\mathbb T]\subseteq C(\mathbb{T})$ that factors as
\begin{equation}\label{su2}
\mathbb C[\mathrm{SU_2]_q}\overset{\Pi_{q}}{\longrightarrow} C^*(S) \overset{p}{\longrightarrow}  C^*(S)/\K\cong C(\mathbb T)
\end{equation}
and such that $\beta_{q}(t_{11}^{q})=z.$ Consider now the $*$-homomorphism 
\begin{equation}\label{tauq}
\tau_{q}:\mathbb C[\mathrm{G]_q}\overset{\pi_{1}^{q}\boxtimes\cdots \boxtimes \pi_{n}^{q}}\longrightarrow C^*(S)^{\otimes n}\overset{p\otimes\cdots\otimes p}\longrightarrow C(\mathbb T)^{\otimes n}\cong C(\mathrm T),
\end{equation}
with the isomorphism $C(\mathbb T)^{\otimes n}\cong C(\mathrm T)$ induced by the isomorphism $\mathbb{T}^{n}\cong \mathrm T$ given by~\eqref{TnT}. By using~\eqref{su2}, we can also factor $\tau_{q}$ as 
\begin{equation}\label{tauq2}
\mathbb{C}[\mathrm{G]_{q}}\overset{(\varsigma_{1}^{q}\otimes \cdots\otimes \varsigma_{n}^{q})\circ \Delta_{q}^{(n)}}\longrightarrow \mathbb{C}[\mathrm{SU_{2}]_{q_{1}}}\otimes \cdots \otimes \mathbb{C}[\mathrm{SU_{2}]_{q_{n}}}\overset{\beta_{q_{1}}\otimes \cdots\otimes \beta_{q_{n}}}\longrightarrow \mathbb{C}[\mathbb T]^{\otimes n}\cong \mathbb{C}[\mathrm T].
\end{equation}
If we consider $\mathbb{C}[\mathrm{SU_{2}]_{q_{1}}}\otimes \cdots \otimes \mathbb{C}[\mathrm{SU_{2}]_{q_{n}}}$ as a tensor product of Hopf $*$-algebras, hence also a Hopf $*$-algebra, then it is easy to check that the $*$-homomorphisms in~\eqref{tauq2} are actually Hopf $*$-algebra morphisms. Thus, $\tau_{q}$ is a morphisms of Hopf $*$-algebras.
\begin{lem}\label{maxtau}
\begin{enumerate}[(i)]
\item The $*$-homomorphism $\tau_{q}:C(\mathrm{G)_{q}}\to C(\mathrm T)$ is surjective,

\item every $\chi_{t},$ $t\in \mathrm T,$ factors as 
$$
\chi_{t}:C(\mathrm{G)_{q}}\overset{\tau_{q}}\longrightarrow C(\mathrm T)\overset{ev_{t}}\longrightarrow \mathbb C,
$$
\item for $\eta\in V_{\lambda}(\gamma_{1}),$ $\xi \in V_{\lambda}(\gamma_{2}),$ we have $\tau_{q}(C_{\eta,\xi}^{\lambda})=0$ unless $\gamma_{1}=\gamma_{2}$ and $\langle \eta,\xi\rangle\neq 0,$
\item let $\omega_{i},$ $i=1,\dots, n,$ be the fundamental weights, then the set $\tau_{q}(C_{\omega_{i},\omega_{i}}^{\omega_{i}}),$ $i=1,\dots,n,$ generates $C(\mathrm T)$ as a $C^{*}$-algebra.
\end{enumerate}
\end{lem}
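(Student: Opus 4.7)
The plan is to prove the four parts in the order (iii), (iv), (i), (ii), since (iii) is the workhorse from which the others follow.

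For (iii), I would start from the factorization~\eqref{tauq2}, writing $\tau_q = (\beta_{q_1}\varsigma_1^q \otimes \cdots \otimes \beta_{q_n}\varsigma_n^q) \circ \Delta_q^{(n)}$, and expand $\Delta_q^{(n)}(C^\lambda_{\eta,\xi})$ by iterating~\eqref{xi1} over an orthonormal basis $\{\xi_k\}$ of weight vectors of $V^q_\lambda$. The key observation is that for each $j$ and each $z\in\mathbb{T}$, the composition $\mathrm{ev}_z\circ\beta_{q_j}\circ\varsigma_j^q$ is a $*$-character of $\mathbb{C}[\mathrm{G}]_q$; by the classification theorem applied with $w=e$ (noting $\pi_e^q\boxtimes\chi_t=\chi_t$ via the counit axiom), it must equal $\chi_{\iota_j(z)}$ for some $\iota_j(z)\in\mathrm{T}$. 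Combined with~\eqref{maxtor} and~\eqref{maxtor2}, such a character sends $C^\lambda_{\xi',\xi''}$ to $e^{\gamma'(x_j(z))}\langle\xi',\xi''\rangle$ when $\xi'$ has weight $\gamma'$ and $\iota_j(z)=e^{x_j(z)}$, vanishing unless $\xi',\xi''$ lie in the same weight space with non-zero overlap. Applied to each tensor factor of the expanded coproduct, this forces the intermediate indices $k_1,\ldots,k_{n-1}$ to coincide at a single vector lying in a common weight space $\gamma=\gamma_1=\gamma_2$; the weight-dependent prefactor $\prod_j e^{\gamma(x_j(z_j))}$ pulls out of the sum, and the remainder collapses via the projection identity $\sum_{\xi_k\in V^q_\lambda(\gamma)}\langle\eta,\xi_k\rangle\langle\xi_k,\xi\rangle = \langle\eta,\xi\rangle$. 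This yields (iii).

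Specializing (iii) to $C^{\omega_i}_{\omega_i,\omega_i}$ and using that $\dim V^q_{\omega_i}(\omega_i)=1$ reduces (iv) to computing each factor $(\beta_{q_j}\varsigma_j^q)(C^{\omega_i}_{\omega_i,\omega_i})$. For $j\neq i$, the standard representation theory of $\mathrm{U_{q_j}}(\mathfrak{su}_2)$ gives $E_j\xi_{\omega_i}=0$ (highest weight) and $F_j\xi_{\omega_i}=0$ (since $\omega_i(H_j)=0$ forces $\omega_i-\alpha_j$ not to occur as a weight of $V^q_{\omega_i}$), together with $K_j\xi_{\omega_i}=\xi_{\omega_i}$; so $\xi_{\omega_i}$ spans a trivial $\mathrm{U_{q_j}}(\mathfrak{su}_2)$-subrepresentation, giving $\varsigma_j^q(C^{\omega_i}_{\omega_i,\omega_i})$ equal to the unit of $\mathbb{C}[\mathrm{SU_2}]_{q_j}$, and $\beta_{q_j}$ sends this to $1\in\mathbb{C}[\mathbb{T}]$. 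For $j=i$, $\xi_{\omega_i}$ restricts to the highest weight vector of the $2$-dimensional fundamental of $\mathrm{U_{q_i}}(\mathfrak{su}_2)$, giving $\varsigma_i^q(C^{\omega_i}_{\omega_i,\omega_i})=t_{11}$ and $\beta_{q_i}(t_{11})=z$. Assembling these, $\tau_q(C^{\omega_i}_{\omega_i,\omega_i})$ is the $i$-th coordinate function $z_i$ in $C(\mathbb{T})^{\otimes n}\cong C(\mathrm{T})$, proving (iv). Since the image of $\tau_q$ is a $C^*$-subalgebra containing $z_1,\ldots,z_n$, it must equal all of $C(\mathrm{T})$, giving (i).

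For (ii), both $\chi_t$ and $\mathrm{ev}_t\circ\tau_q$ are $*$-characters of $\mathbb{C}[\mathrm{G}]_q$ and hence equal some $\chi_s$ by the classification. Evaluating on $C^{\omega_i}_{\omega_i,\omega_i}$: by~\eqref{maxtor2}, $\chi_t(C^{\omega_i}_{\omega_i,\omega_i})=e^{\omega_i(x)}$ for $t=e^x$, while $\mathrm{ev}_t\circ\tau_q(C^{\omega_i}_{\omega_i,\omega_i})=\mathrm{ev}_t(z_i)=e^{\omega_i(x)}$ under the identification~\eqref{TnT}. Since the map $s\mapsto (e^{\omega_1(x_s)},\ldots,e^{\omega_n(x_s)})$ is the injective isomorphism~\eqref{TnT}, two characters agreeing on all $C^{\omega_i}_{\omega_i,\omega_i}$ must coincide, so $\mathrm{ev}_t\circ\tau_q=\chi_t$. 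The main obstacle will be the careful bookkeeping in the proof of (iii)/(iv): verifying that $\xi_{\omega_i}$ generates the trivial sub-representation under each $\mathrm{U_{q_j}}(\mathfrak{su}_2)$ with $j\neq i$ (which hinges on the standard vanishing $F_j\xi_{\omega_i}=0$) and correctly matching the resulting tensor coordinate with $z_i$ via the identification~\eqref{TnT}.
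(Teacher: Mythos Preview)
Your proposal is correct and follows essentially the same approach as the paper. Both proofs hinge on the computation $\varsigma_j^q(C^{\omega_i}_{\omega_i,\omega_i})=t_{11}$ or $I$ (you argue via $F_j\xi_{\omega_i}=0$ when $j\neq i$, the paper invokes the PBW basis $F^iK^jE^k$), deduce $\tau_q(C^{\omega_i}_{\omega_i,\omega_i})=z_i$, and then match $\chi_t$ with $\mathrm{ev}_t\circ\tau_q$ on these generators via~\eqref{TnT} and~\eqref{maxtor2}. The only stylistic difference is in (iii): the paper observes directly that every point evaluation of $\tau_q$ is some $\chi_t$ and applies~\eqref{maxtor2} in one stroke, whereas you expand $\Delta_q^{(n)}$ over a weight basis and analyze each tensor factor separately---more bookkeeping, but the same content.
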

\begin{proof}
Clearly, $(i)$ follows from $(iv)$ or $(ii).$ We have that every one-dimensional $*$-representation of $\mathbb{C}[\mathrm{G]_{q}}$ is of the form $\chi_{t},$ for some $t\in \mathrm T,$ and different weight spaces are orthogonal. Hence we get $(iii)$ from~\eqref{maxtor} and~\eqref{maxtor2} by point evaluation. Note that the same argument shows that $(iii)$ holds for \textit{any} $*$-homomorphism from $\mathbb{C}[\mathrm{G]_{q}}$ to a commutative $C^{*}$-algebra.  

By Lemma~$2.2.1$ in~\cite{ks}, the elements of the form $C^{\lambda}_{\eta,\lambda},$ $\lambda\in P_{+},$ $\eta\in V^{q}_{\lambda}$ generates $\mathbb{C}[\mathrm{G]_{q}}$ as a $*$-algebra. Thus it follows from $(iii)$ that the elements in $C(\mathrm T)$ of the form $\tau_{q}(C_{\lambda,\lambda}^{\lambda}),$ $\lambda\in P_{+}$ generate the image of $\tau_{q}.$ As every $\lambda\in P_{+}$ is a linear combination of the fundamental weights, it follows from~\eqref{maxtor2} that the set $\{\tau_{q}(C_{\omega_{i},\omega_{i}}^{\omega_{i}}):i=1,\dots,n\}$ generate the image of $\tau_{q}.$ Thus $(ii)\Rightarrow (i)\Rightarrow (iv).$ 
To prove $(ii),$ we establish a one-to-one correspondence between one-dimensional $*$-representations and point evaluations of $\tau_{q}.$ By (\cite{ksch}, Theorem~$14,$ Section 6.1.5), the set $F^{i}K^{j}E^{k},$ $j\in \mathbb Z$ $i,k\in \mathbb{Z}_{+},$ is a basis for $\mathrm{U_{q}}(\mathfrak{su}_{2}).$ Using this, and keeping in mind that $\xi_{\omega_{i}}$ is a heighest weight vector, we obtain
\begin{equation}\label{omegai}
\varsigma_{i}^{q}(C_{\omega_{j},\omega_{j}}^{\omega_{j}})=
\begin{cases}
t_{11}^{q_{i}}, & \text{if $i=j$}\\
I, & \text{for $i\neq j.$}
\end{cases}
\end{equation}
It then follows that $(\beta_{i}\circ \varsigma^{q}_{i})(C_{\omega_{j},\omega_{j}}^{\omega_{j}})=z$ for $i=j$ and $(\beta_{i}\circ \varsigma^{q}_{i})(C_{\omega_{j},\omega_{j}}^{\omega_{j}})=I$ if $i\neq j.$ If we extend $\xi_{1}=\xi_{\omega_{i}}$ by $\xi_{2},\dots,\xi_{m}$ to an orthonormal basis for $V_{\omega_{i}}^{q},$ we then get from~\eqref{xi1} and our comment after the proof of (iii), that if we let $z_{i}$ be the i'th coordinate function of $\mathbb{T}^{n},$ then

\begin{equation}
\begin{array}{ccc}
\tau_{q}(C_{\omega_{i},\omega_{i}}^{\omega_{i}})=z_{i}, & i=1,\dots, n,
\end{array}
\end{equation}
and from this it follows that the range of $\tau_{q}$ is dense in $C(\mathrm{T}).$ By~\eqref{maxtor2}, if $t=e^{x},$ $x\in \mathfrak t,$ then we have $\chi_{t}(C^{\omega_{j}}_{\omega_{j},\omega_{j}})=e^{\omega_{j}(x)}$ for $j=1,\dots,n.$ It thus follows that if we use the identification $\mathrm T\cong \mathbb T^{n}$ given by~\eqref{TnT}, then $t=(e^{\omega_{1}(x)},\dots, e^{\omega_{n}(x)})$ and hence
$$
\begin{array}{ccc}
\chi_{t}(C^{\omega_{j}}_{\omega_{j},\omega_{j}})=e^{\omega_{j}(x)}=\ev_{t}(z_{j})=(\ev_{t}\circ \tau_{q})(C^{\omega_{j}}_{\omega_{j},\omega_{j}}), & j=1,\dots, n.
\end{array}
$$
\end{proof}
Let $\mathbb{C}[\mathrm{G]_{q}^{\textsc{inv}}}\subseteq \mathbb{C}[\mathrm{G]_{q}}$ denote the $*$-subalgebra of elements invariant under the left-right-action of the maximal torus $\mathrm T.$ By definition, this is the subset $x\in\mathbb{C}[\mathrm{G]_{q}},$ such that for every $t\in \mathrm T,$ we have $L_{t}(x)=x=R_{t}(x),$ where $L_{t}$ and $R_{t}$ are given by
\begin{equation}\label{laction}
\begin{array}{cc}
L_{t}:\mathbb{C}[\mathrm{G]_{q}}\overset{\Delta_{q}}{\longrightarrow} \mathbb{C}[\mathrm{G]_{q}}\otimes \mathbb{C}[\mathrm{G]_{q}}\overset{\tau_{q}\otimes \iota}{\longrightarrow} C(\mathrm T)\otimes \mathbb{C}[\mathrm{G]_{q}}\overset{ ev_{t}\otimes\iota}{\longrightarrow} \mathbb{C}[\mathrm{G]_{q}} & \text{(Left-action).}
\end{array}
\end{equation}
\begin{equation}\label{raction}
\begin{array}{ccc}
R_{t}:\mathbb{C}[\mathrm{G]_{q}}\overset{\Delta_{q}}{\longrightarrow} \mathbb{C}[\mathrm{G]_{q}}\otimes \mathbb{C}[\mathrm{G]_{q}}\overset{\iota\otimes \tau_{q}}{\longrightarrow} \mathbb{C}[\mathrm{G]_{q}}\otimes C(\mathrm T)\overset{\iota\otimes ev_{t}}{\longrightarrow} \mathbb{C}[\mathrm{G]_{q}} & \text{(Right-action)}
\end{array}
\end{equation}
Clearly $\mathbb{C}[\mathrm{G]_{q}^{\textsc{inv}}}$ is a $*$-subalgebra of $\mathbb{C}[\mathrm{G]_{q}}.$
\begin{lem}\label{lem3}
For every $w\in W,$ there exists $\Upsilon_{w}\in \mathbb{C}[\mathrm{G]_{q}^{\textsc{inv}}}$ such that 
\begin{enumerate}[(i)]
\item $\pi_{w}^{q}(\Upsilon_{w})\in \mathcal{B}(\Hh_{w})$ is a compact contractive positive operator with dense range,
\item $e_{0}^{\otimes \elll(w)}\in \Hh_{w}$ is the only eigenvector of $\pi_{w}^{q}(\Upsilon_{w})$ (up to a scalar multiple) with eigenvalue $1,$
\item $\pi_{v}^{q}(\Upsilon_{w})\neq 0$ if and only if $v\geq w .$
\end{enumerate}
\end{lem}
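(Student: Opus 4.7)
The plan is to construct $\Upsilon_{w}$ as a single torus-invariant matrix-coefficient product. Fix any $\lambda\in P_{++}$ and set
$$
\Upsilon_{w}:=(C^{\lambda}_{w\cdot\lambda,\lambda})^{*}\,C^{\lambda}_{w\cdot\lambda,\lambda}.
$$
To check torus-invariance I would apply $\chi_{t}\otimes\iota$ and $\iota\otimes\chi_{t}$ to the coproduct $\Delta_{q}(C^{\lambda}_{w\cdot\lambda,\lambda})=\sum_{j}C^{\lambda}_{w\cdot\lambda,\xi_{j}}\otimes C^{\lambda}_{\xi_{j},\lambda}$ expressed in a basis of weight vectors $\{\xi_{j}\}$. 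Since $\lambda\in P_{++}$ forces the weight spaces at $\lambda$ and $w\cdot\lambda$ to be one-dimensional, Lemma~\ref{maxtau}(iii) collapses each sum to a single surviving term, giving
$$
L_{t}(C^{\lambda}_{w\cdot\lambda,\lambda})=e^{(w\cdot\lambda)(x)}\,C^{\lambda}_{w\cdot\lambda,\lambda},\qquad R_{t}(C^{\lambda}_{w\cdot\lambda,\lambda})=e^{\lambda(x)}\,C^{\lambda}_{w\cdot\lambda,\lambda}
$$
for $t=e^{x}$, $x\in\mathfrak t$. Both scalars have modulus $1$, so the $*$-product $\Upsilon_{w}$ is fixed by every $L_{t}$ and $R_{t}$, placing it in $\mathbb{C}[\mathrm{G}]_{q}^{\textsc{inv}}$.

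For (i) and (ii), set $A:=\pi_{w}^{q}(C^{\lambda}_{w\cdot\lambda,\lambda})$, so that $\pi_{w}^{q}(\Upsilon_{w})=A^{*}A$. By Lemma~\ref{lem2}(i), $A$ is compact, contractive, diagonal in an orthonormal basis of $\Hh_{w}$, injective, and has $e_{0}^{\otimes\elll(w)}$ as the unique basis vector whose eigenvalue has modulus $1$. Consequently $A^{*}A$ is compact, positive and contractive; its kernel equals $\ker A=\{0\}$, so by self-adjointness its range is dense; and the eigenvalues of $A^{*}A$ are the squared moduli of those of $A$, so $e_{0}^{\otimes\elll(w)}$ is the unique (up to scalar) eigenvector with eigenvalue $1$.

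For (iii), the ``only if'' direction is immediate: if $v\not\geq w$, then Corollary~\ref{vnotw} (with the roles of $v$ and $w$ exchanged) gives $\xi_{w\cdot\lambda}\perp(\mathrm{U}_{q}(\mathfrak b))V_{\lambda}^{q}(v\cdot\lambda)$, and Lemma~\ref{lem2}(ii) then yields $\pi_{v}^{q}(C^{\lambda}_{w\cdot\lambda,\lambda})=0$, whence $\pi_{v}^{q}(\Upsilon_{w})=0$. For the ``if'' direction, assume $v\geq w$. Right-invariance of $\Upsilon_{w}$ implies $(\pi_{\sigma}^{q}\boxtimes\chi_{t})(\Upsilon_{w})=\pi_{\sigma}^{q}(\Upsilon_{w})$ for every $\sigma\in W$ and $t\in\mathrm T$, so the condition $\pi_{v}^{q}(\Upsilon_{w})=0$ is equivalent to $\Upsilon_{w}\in\bigcap_{s\in\mathrm T}\ker(\pi_{v}^{q}\boxtimes\chi_{s})$. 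Applying Theorem~\ref{ntthm} with $\sigma=v$, $r=w$, $Y=\mathrm T$ (so that $\overline{Y}\mathrm T_{w}^{v}=\mathrm T$ is automatic for every $t$) would then force $\Upsilon_{w}\in\ker(\pi_{w}^{q}\boxtimes\chi_{1})$, i.e.\ $\pi_{w}^{q}(\Upsilon_{w})=0$, contradicting the previous step. The main obstacle is precisely this last reduction: everything hinges on first establishing torus-invariance, since it is this invariance that makes the hypothesis of Theorem~\ref{ntthm} with $Y=\mathrm T$ trivially fulfilled and thus allows one to propagate non-vanishing upward along the Bruhat order from $w$ to every $v\geq w$.
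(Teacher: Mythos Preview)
Your construction and argument are correct and match the paper's proof almost exactly: the same element $\Upsilon_{w}=(C^{\lambda}_{w\cdot\lambda,\lambda})^{*}C^{\lambda}_{w\cdot\lambda,\lambda}$, the same use of Lemma~\ref{maxtau}(iii) for invariance, of Lemma~\ref{lem2}(i) for (i)--(ii), and of Corollary~\ref{vnotw} with Lemma~\ref{lem2}(ii) for the vanishing half of (iii).

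The only real difference is in the non-vanishing half of (iii). You route through right-invariance and apply Theorem~\ref{ntthm} with $Y=\mathrm T$, arguing by contradiction. The paper instead applies Theorem~\ref{ntthm} directly with $Y=\{1\}$: since $1\in\mathrm T_{w}^{v}$ whenever $w\leq v$, one gets $\ker\pi_{v}^{q}\subseteq\ker\pi_{w}^{q}$ immediately, and then $\pi_{w}^{q}(C^{\lambda}_{w\cdot\lambda,\lambda})\neq 0$ forces $\pi_{v}^{q}(C^{\lambda}_{w\cdot\lambda,\lambda})\neq 0$. This is shorter and, contrary to your closing remark, does \emph{not} require torus-invariance at all --- so invariance is needed only to place $\Upsilon_{w}$ in $\mathbb{C}[\mathrm G]_{q}^{\textsc{inv}}$, not to run the Bruhat-order argument.
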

\begin{proof}
Take any $\lambda\in P_{++}$ and consider $C_{w\cdot \lambda,\lambda}^{\lambda}.$ By combining Corollary~\ref{vnotw} with Lemma~\ref{lem2} it follows that $\pi_{v}^{q}(C_{w\cdot\lambda,\lambda}^{\lambda})=0$ for any $v\not \geq w.$ If $v\geq w,$ then as $1\in \mathrm T_{w}^{v}$ (the identity of $\mathrm T$) it follows from Theorem~\ref{ntthm} that we have $\ker \pi_{v}^{q}\subseteq \ker \pi_{w}^{q}.$ As $\pi_{w}^{q}(C_{w\cdot\lambda,\lambda}^{\lambda})\neq 0,$ it thus follows that also $\pi_{v}^{q}(C_{w\cdot \lambda,\lambda}^{\lambda})\neq 0.$ If we extend $\xi_{\lambda},\xi_{w\cdot\lambda}$ to an orthonormal basis for $V^{q}_{\lambda},$ then we get from~\eqref{xi1} and Lemma~\ref{maxtau} $(iii)$ that
\begin{equation}\label{lraction}
\begin{array}{cccc}
L_{t}(C_{w\cdot\lambda,\lambda}^{\lambda})=\chi_{t}(C_{w\cdot\lambda,w\cdot\lambda}^{\lambda})\cdot C_{w\cdot\lambda,\lambda}^{\lambda}, & R_{t}(C_{w\cdot\lambda,\lambda}^{\lambda})=\chi_{t}(C_{\lambda,\lambda}^{\lambda})\cdot C_{w\cdot\lambda,\lambda}^{\lambda},& t\in \mathrm T.
\end{array}
\end{equation}
Let us now define $\Upsilon_{w}:=(C_{w\cdot\lambda,\lambda}^{\lambda})^{*}C_{w\cdot\lambda,\lambda}^{\lambda}.$ As $L_{t}$ and $R_{t}$ are $*$-automorphisms for all $t\in\mathrm T,$ it follows from~\eqref{lraction} that $\Upsilon_{w}\in \mathbb{C}[\mathrm{G]_{q}^{\textsc{inv}}}.$ Positivity follows from the definition of $\Upsilon_{w},$ and the other claims in $(i)$ and $(iii)$ follow by Lemma~\ref{lem2} $(i)$.  To see $(ii),$ note that
\begin{equation}\label{calc1}
\langle \pi_{w}^{q}(\Upsilon_{w})e_{0}^{\otimes \elll(w)},e_{0}^{\otimes \elll(w)}\rangle=\|\pi_{w}^{q}(C_{w\cdot\lambda,\lambda}^{\lambda})e_{0}^{\otimes \elll(w)} \|^{2}= \|e_{0}^{\otimes \elll(w)} \|^{2}=1.
\end{equation}
 As $\pi_{w}^{q}(\Upsilon_{w})$ is a positive contraction, it follows that $e_{0}^{\otimes \elll(w)}$ must be an eigenvector with eigenvalue $1$. To see that $e_{0}^{\otimes \elll(w)}$ is the only eigenvector (up to a scalar multiple) of $\pi_{w}^{q}(\Upsilon_{w})$ with eigenvalue $1,$ notice that~\eqref{calc1} gives that $e_{0}^{\otimes \elll(w)}$ is also an eigenvector for $\pi_{w}^{q}(C_{w\cdot\lambda,\lambda}^{\lambda})^{*}$ and thus the subspace generated by $e_{0}^{\otimes \elll(w)}$ is actually reducing $\pi_{w}^{q}(C_{w\cdot\lambda,\lambda}^{\lambda}).$ By $(i)$ of Lemma~\ref{lem2}, it follows that $\pi_{w}^{q}(C_{w\cdot\lambda,\lambda}^{\lambda})$ must have norm strictly less than $1$ when restricted to the orthogonal complement of $e_{0}^{\otimes \elll(w)}$ (otherwise there would be another eigenvector orthogonal to $e_{0}^{\otimes \elll(w)}$ with an eigenvalue of absolute value $1$). The same then holds for $\pi_{w}^{q}(\Upsilon_{w})=\pi_{w}^{q}(C_{w\cdot\lambda,\lambda}^{\lambda})^{*}\pi_{w}^{q}(C_{w\cdot\lambda,\lambda}^{\lambda}).$ 
\end{proof}
\begin{defn}
Let us call a $*$-homomorphism of $\mathbb{C}[\mathrm{G]_{q}}$ a \textit{commutative} $*$-representation if the image sits inside a commutative $C^{*}$-algebra.
\end{defn}
Let $\chi:\mathbb{C}[\mathrm{G]_{q}}\to C(\mathrm X)$ be a commutative $*$-representation. As every one-dimensional $*$-representation of $\mathbb{C}[\mathrm{G]_{q}}$ factors throught the commutative $C^{*}$-algebra $C(\mathrm T),$ it follows that the commutative $*$-representation $\chi$ factors as 
$$
\chi=\zeta\circ \tau_{q}
$$
$$
\mathbb{C}[\mathrm{G]_{q}}\overset{\tau_{q}}{\to} C(\mathrm T)\overset{\zeta}{\to} C(\mathrm X)
$$
for a unique $*$-homomorphism $\zeta.$
\begin{defn}
Let $$\begin{array}{ccc}\chi^{q}:\mathbb{C}[\mathrm{G]_{q}}\to C(\mathrm X), & q\in (0,1)\end{array}$$ be a family of $*$-homomorphisms, where $X$ is a fixed compact Hausdorff space. We say that the $*$-homomorphisms $\{\chi^{q}\}_{q\in (0,1)}$ are \textit{$q$-independent} if in the factorization $$\chi^{q}=\zeta^{q}\circ \tau_{q},$$
$$
\mathbb{C}[\mathrm{G]_{q}}\overset{\tau_{q}}{\to}  C(\mathrm T) \overset{\zeta^{q}}{\to} C(\mathrm X),
$$
we have $\zeta^{q}=\zeta^{s}$ for all $q,s\in (0,1).$
\end{defn}
\begin{defn}
\begin{enumerate}[(a)]
\item For $q\in (0,1),$ let $\Bb_{w}^{q}\subseteq  \mathcal{B}(\Hh_{w})$ be the closure of the image $\pi_{w}:\mathbb{C}[\mathrm{G]_{q}}\rightarrow \mathcal{B}(\Hh_{w}).$
\item If $\chi:\mathbb{C}[\mathrm{G]_{q}}\rightarrow C(\mathrm X)$ is a commutative $*$-representation, then let $B_{w,\chi}^{q}$ be the closure of the image of $\pi_{w}\boxtimes \chi:\mathbb{C}[\mathrm{G]_{q}}\rightarrow B_{w}^{q}\otimes C(\mathrm X).$ 
\end{enumerate}
\end{defn}
\begin{prop}\label{prop2}
Let $w\in W.$ Let $\K_{w}\subseteq \mathcal{B}(\Hh_{w})$ be the space of compact operators and let 
$$
p_{w}: \mathcal{B}(\Hh_{w})\longrightarrow \mathcal {Q}(\Hh_{w})
$$
be the quotient map to the Calkin algebra. For every $v\lhd w,$ there exists a subset $\mathrm T_{v}^{w}\subseteq \mathrm T$ and a commutative $q$-independent $*$-representation $\chi_{v}^{w}:\mathbb{C}[\mathrm{G]_{q}}\to C(\mathrm T_{v}^{w}),$ such that the map
\begin{equation}\label{isomorph}
\begin{array}{ccc}\eta_{w}^{q}:\pi_{w}^{q}(x)+\K_{w}\mapsto  \sumrep{}(\pi_{v}^{q}\boxtimes \chi_{v}^{w})(x), & \text{for $x\in \mathbb{C}[\mathrm{G]_{q}},$}\end{array}
\end{equation}
(where the sum ranges over all $v\in W$ covered by $w$) determines an isomorphism
\begin{equation}\label{lower}
\eta_{w}^{q}:\Bb^{q}_{w}/\K_{w}\longrightarrow\overline{ \sumrep{}(\pi_{v}^{q}\boxtimes \chi_{v}^{w})(\mathbb{C}[\mathrm{G]_{q}})}.
\end{equation}

\end{prop}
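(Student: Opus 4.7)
The plan is to take as candidate the commutative $*$-representation
\[
\chi_v^w := \mathrm{res}_{\mathrm T_v^w}\circ \tau_q : \mathbb{C}[\mathrm{G]_{q}} \xrightarrow{\tau_q} C(\mathrm T)\longrightarrow C(\mathrm T_v^w),
\]
where $\tau_q$ is the Hopf $*$-homomorphism from \eqref{tauq} and $\mathrm{res}_{\mathrm T_v^w}$ is the obvious restriction. Since for $v\lhd w$ there is a unique path and $\mathrm T_v^w$ is a closed (hence compact) subgroup of $\mathrm T$, this makes sense. The decomposition $\chi_v^w = \mathrm{res}_{\mathrm T_v^w}\circ \tau_q$ exhibits the $q$-independence of $\chi_v^w$, since the restriction map does not depend on $q$. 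It remains to show that
\[
\Phi_w^q := \bigoplus_{v\lhd w} \pi_v^q \boxtimes \chi_v^w : \mathbb{C}[\mathrm{G]_{q}}\longrightarrow \bigoplus_{v\lhd w}\mathcal B(\Hh_v)\otimes C(\mathrm T_v^w)
\]
descends, via \eqref{isomorph}, to an isomorphism from $\Bb_w^q/\K_w$ onto $\overline{\Phi_w^q(\mathbb{C}[\mathrm{G]_{q}})}$.

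Applying Theorem~\ref{ntthm} with $\sigma = w$ and $Y = \{1\}$ yields $\ker \pi_w^q \subseteq \ker (\pi_v^q \boxtimes \chi_t)$ for every $v\leq w$ and $t\in \mathrm T_v^w$, so $\Phi_w^q$ factors as $\psi \circ \pi_w^q$ for a unique surjective $*$-homomorphism $\psi : \Bb_w^q \to \overline{\Phi_w^q(\mathbb{C}[\mathrm{G]_{q}})}$. To see $\K_w\subseteq \ker \psi$, I would use the element $\Upsilon_w$ from Lemma~\ref{lem3}: every $v\lhd w$ satisfies $v\not\geq w$, so Lemma~\ref{lem3}(iii) gives $\pi_v^q(\Upsilon_w)=0$ and hence $\Phi_w^q(\Upsilon_w)=0$. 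By Lemma~\ref{lem3}(i), $\pi_w^q(\Upsilon_w)$ is a nonzero compact, so $\ker\psi\cap\K_w$ is a nonzero closed ideal of $\Bb_w^q$ sitting inside $\K_w$. Because $\pi_w^q$ is irreducible and $\Bb_w^q$ contains nonzero compacts, $\K_w$ is the unique minimal nonzero closed ideal of $\Bb_w^q$, which forces $\ker\psi \supseteq \K_w$.

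The main step is injectivity of the induced $*$-homomorphism $\tilde\psi : \Bb_w^q/\K_w \to \overline{\Phi_w^q(\mathbb{C}[\mathrm{G]_{q}})}$. It suffices to show that $\Phi_w^q(x)=0$ implies $\pi_w^q(x)\in \K_w$, for then $\tilde\psi$ is injective on the dense $*$-subalgebra $\pi_w^q(\mathbb{C}[\mathrm{G]_{q}})/\K_w$, hence isometric there and extending isometrically to $\Bb_w^q/\K_w$. So assume $(\pi_v^q\boxtimes \chi_t)(x) = 0$ for every $v\lhd w$ and every $t\in \mathrm T_v^w$. To apply Lemma~\ref{ntlem} with $t=1$, I need the same vanishing for \emph{every} $r<w$ and $s\in \mathrm T_r^w$. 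When $r\lhd w$ this is the hypothesis; otherwise, by \eqref{mult} one can write $s = s_1 s_2$ with $s_1\in \mathrm T_r^v$ and $s_2\in \mathrm T_v^w$ for some cover $v\lhd w$ with $r<v$. Since $\mathrm T$ is abelian, $s\in s_2 \mathrm T_r^v$, so Theorem~\ref{ntthm} (with $\sigma = v$, $Y=\{s_2\}$) gives $\ker(\pi_v^q\boxtimes \chi_{s_2})\subseteq \ker(\pi_r^q\boxtimes \chi_s)$, and the vanishing propagates to $(\pi_r^q\boxtimes \chi_s)(x)=0$. Lemma~\ref{ntlem} then concludes $\pi_w^q(x)\in \K_w$, as required.

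The genuine obstacle is precisely this last upgrade: the hypothesis only vanishes on covers $v\lhd w$, yet Lemma~\ref{ntlem} demands vanishing on all $v<w$. The key is the multiplicative structure \eqref{mult} of the subsets $\mathrm T_r^w$ combined with the Bruhat-indexed kernel containment of Theorem~\ref{ntthm}; once these are properly combined, everything else is bookkeeping.
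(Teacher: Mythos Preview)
Your approach is essentially identical to the paper's: the same choice $\chi_v^w=\mathrm{res}_{\mathrm T_v^w}\circ\tau_q$, the same use of Theorem~\ref{ntthm} with $Y=\{1\}$ to define the map, the same $\Upsilon_w$ argument for $\K_w\subseteq\ker\psi$, and the same propagation of vanishing from covers $v\lhd w$ to all $r<w$ via~\eqref{mult} and Theorem~\ref{ntthm}, followed by Lemma~\ref{ntlem}.

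Two small points need tightening. First, the implication ``$\pi_v^q(\Upsilon_w)=0$ hence $\Phi_w^q(\Upsilon_w)=0$'' is not automatic: in general $\pi_v^q(a)=0$ does not force $(\pi_v^q\boxtimes\chi_t)(a)=0$. The paper closes this by invoking Lemma~\ref{lem4}, which gives $(\pi_v^q\boxtimes\chi_v^w)(\Upsilon_w)=\pi_v^q(\Upsilon_w)\otimes I$ because $\Upsilon_w\in\mathbb{C}[\mathrm G]_q^{\textsc{inv}}$. Second, your density argument (``injective on the dense $*$-subalgebra $\pi_w^q(\mathbb{C}[\mathrm G]_q)/\K_w$, hence isometric there'') is not valid as stated: injectivity implies isometry only for $*$-homomorphisms between $C^*$-algebras, and a dense $*$-subalgebra need not meet every nonzero closed ideal. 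This detour is in any case unnecessary. Lemma~\ref{ntlem} and Theorem~\ref{ntthm} are stated for $x\in C(\mathrm G)_q$, and your argument for the key step uses only those; since $\pi_w^q:C(\mathrm G)_q\to\Bb_w^q$ is surjective and $\psi\circ\pi_w^q=\Phi_w^q$ on $C(\mathrm G)_q$, the implication ``$\Phi_w^q(x)=0\Rightarrow\pi_w^q(x)\in\K_w$'' for all $x\in C(\mathrm G)_q$ gives $\ker\psi\subseteq\K_w$ directly. This is exactly how the paper argues.
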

We will postpone the proof of Proposition~\ref{prop2} until after Lemma~\ref{lem5}. 

\begin{lem}\label{lem4}
If $x\in \mathbb{C}[\mathrm{G]_{q}^{\textsc{inv}}},$ then for any $w\in W$ and any commutative $*$-representation $\chi,$ we have
$$
(\pi_{w}\boxtimes \chi)(x)=\pi_{w}(x)\otimes I.
$$
\end{lem}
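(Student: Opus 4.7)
The plan is to exploit the factorization of commutative $*$-representations through the torus (the remark preceding the definition of $q$-independence), combined with the fact that the right-action of $\mathrm T$ is implemented via $\tau_q$ on the second leg of $\Delta_q$. Because $\chi=\zeta\circ\tau_q$ for some $*$-homomorphism $\zeta:C(\mathrm T)\to C(\mathrm X)$, we can rewrite
$$(\pi_w\boxtimes\chi)(x)=(\pi_w\otimes\chi)\circ\Delta_q(x)=(\pi_w\otimes\zeta)\bigl((\iota\otimes\tau_q)\circ\Delta_q(x)\bigr),$$
so the whole statement will reduce to showing that the torus-invariance of $x$ forces
$$(\iota\otimes\tau_q)\circ\Delta_q(x)=x\otimes 1\in\mathbb C[\mathrm G]_q\otimes C(\mathrm T).$$

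To establish the displayed identity, I would write $(\iota\otimes\tau_q)\circ\Delta_q(x)=\sum_j y_j\otimes g_j$ with the $y_j\in\mathbb C[\mathrm G]_q$ linearly independent and $g_j\in\mathbb C[\mathrm T]$. By Lemma~\ref{maxtau}~(ii), $\mathrm{ev}_t\circ\tau_q=\chi_t$, so the definition~\eqref{raction} of the right action gives
$$R_t(x)=(\iota\otimes\mathrm{ev}_t)\circ(\iota\otimes\tau_q)\circ\Delta_q(x)=\sum_j g_j(t)\,y_j.$$
Right-invariance of $x$ yields $\sum_j g_j(t)y_j=x$ for every $t\in\mathrm T$; linear independence of the $y_j$ then forces each function $g_j$ to be constant on $\mathrm T$, say $g_j=c_j\cdot 1$, and then $\sum_j c_j y_j=x$, which is exactly the claim $(\iota\otimes\tau_q)\circ\Delta_q(x)=x\otimes 1$. (Nothing in this step really uses left-invariance; only right-invariance is needed for the conclusion.)

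Feeding this back into the first display,
$$(\pi_w\boxtimes\chi)(x)=(\pi_w\otimes\zeta)(x\otimes 1)=\pi_w(x)\otimes\zeta(1)=\pi_w(x)\otimes I,$$
as desired. There is no essential obstacle here: the only thing to check carefully is that $\tau_q$ really does land in a space where point-evaluations at elements of $\mathrm T$ separate linear combinations, which is immediate since $\tau_q$ takes values in $\mathbb C[\mathrm T]$ and the characters $\mathrm{ev}_t$, $t\in\mathrm T$, separate functions on the torus.
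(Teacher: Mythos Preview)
Your proof is correct and follows essentially the same approach as the paper: both reduce to the identity $(\iota\otimes\tau_q)\circ\Delta_q(x)=x\otimes I$ and then use the factorization $\chi=\zeta\circ\tau_q$. The paper simply asserts this identity as equivalent to $R_t(x)=x$ for all $t\in\mathrm T$, whereas you spell out the linear-independence argument; your observation that only right-invariance is used is also correct.
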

\begin{proof}
By~\eqref{raction} and~\eqref{laction}, the left and right-actions by $t\in \mathrm T$ on $\mathbb{C}[\mathrm{G]_{q}}$ are respectively given as the compositions
$$
\mathbb{C}[\mathrm{G]_{q}}\overset{(\tau_{q}\otimes\iota)\circ \Delta_{q}}{\to} C(\mathrm T)\otimes\mathbb{C}[\mathrm{G]_{q}}\overset{\ev_{t}\otimes \iota}{\to} \mathbb{C}[\mathrm{G]_{q}},
$$
$$
\mathbb{C}[\mathrm{G]_{q}}\overset{(\iota\otimes\tau_{q})\circ \Delta_{q}}{\to} \mathbb{C}[\mathrm{G]_{q}}\otimes C(\mathrm T)\overset{ \iota\otimes \ev_{t}}{\to} \mathbb{C}[\mathrm{G]_{q}}.
$$
Let $x\in \mathbb{C}[\mathrm{G]_{q}^{\textsc{inv}}}.$ Clearly, it follows $L_{t}(x)=R_{t}(x)=x$ for all $t\in \mathrm T$ if and only if
$$
\begin{array}{ccc}
(\iota\otimes\tau_{q})\circ \Delta_{q}(x)=x\otimes I, &(\tau_{q}\otimes\iota)\circ \Delta_{q}(x)=I\otimes x.
\end{array}
$$
The statement now follows from the fact that any $*$-homomorphism $$\chi:C(\mathrm{G)_{q}}\rightarrow C(\mathrm X)$$ factors as $\chi=\zeta\circ\tau_{q}$ for a unique $*$-representation $\zeta:C(\mathrm T)\to C(\mathrm X).$
\end{proof}
\begin{lem}\label{lem5}
Let $\chi:C(\mathrm{G)_{q}}\rightarrow C(\mathrm X)$ be a $*$-homomorphism such that we have $\chi(C(\mathrm{G)_{q}})=C(\mathrm X).$ Then for any $w\in W,$ 
\begin{equation}\label{subset}
\K_{w}\otimes C(\mathrm X)\subset \Bb_{w,\chi}^{q}.
\end{equation}
\end{lem}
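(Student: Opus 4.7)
The plan is to produce the rank-one projection $p_{0}=|e_{0}^{\otimes\elll(w)}\rangle\langle e_{0}^{\otimes\elll(w)}|$ tensored with the identity inside $\Bb_{w,\chi}^{q}$, then upgrade this to $p_{0}\otimes C(\mathrm X)$, and finally use the fibrewise irreducibility of $\pi_{w}^{q}$ to fill out all of $\K_{w}\otimes C(\mathrm X)$.

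\textbf{Step 1.} By Lemma~\ref{lem3}, $\Upsilon_{w}\in\mathbb{C}[\mathrm G]_{q}^{\textsc{inv}}$ and $\pi_{w}^{q}(\Upsilon_{w})$ is a compact positive contraction having $1$ as an isolated, simple eigenvalue with eigenvector $e_{0}^{\otimes\elll(w)}$. Applying Lemma~\ref{lem4}, $(\pi_{w}^{q}\boxtimes\chi)(\Upsilon_{w})=\pi_{w}^{q}(\Upsilon_{w})\otimes I$, so continuous functional calculus with any $f\in C([0,1])$ satisfying $f(1)=1$ and $f\equiv 0$ on the rest of the spectrum gives $p_{0}\otimes I=f(\pi_{w}^{q}(\Upsilon_{w})\otimes I)\in\Bb_{w,\chi}^{q}$.

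\textbf{Step 2.} A Sweedler computation yields
$$(p_{0}\otimes I)(\pi_{w}^{q}\boxtimes\chi)(a)(p_{0}\otimes I)=p_{0}\otimes\phi_{a},\qquad \phi_{a}:=(\omega_{0}\otimes\chi)\circ\Delta_{q}(a),$$
where $\omega_{0}=\langle\pi_{w}^{q}(\cdot)e_{0}^{\otimes\elll(w)},e_{0}^{\otimes\elll(w)}\rangle$. Hence the corner $(p_{0}\otimes I)\Bb_{w,\chi}^{q}(p_{0}\otimes I)$ is a unital $C^{*}$-subalgebra of $p_{0}\otimes C(\mathrm X)\cong C(\mathrm X)$ containing every $\phi_{a}$. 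Specializing to $a=C_{w\cdot\lambda,\lambda}^{\lambda}$ for $\lambda\in P_{++}$, the expansion $\Delta_{q}(C_{w\cdot\lambda,\lambda}^{\lambda})=\sum_{k}C_{w\cdot\lambda,\xi_{k}}^{\lambda}\otimes C_{\xi_{k},\lambda}^{\lambda}$ combined with the weight-matching clause of Lemma~\ref{maxtau}$(iii)$ (which applies to any $*$-homomorphism to a commutative $C^{*}$-algebra) forces the only surviving Sweedler term to be $\xi_{k}=\xi_{\lambda}$, yielding
$$\phi_{C_{w\cdot\lambda,\lambda}^{\lambda}}=\alpha_{\lambda}\cdot\chi(C_{\lambda,\lambda}^{\lambda}),$$
with $\alpha_{\lambda}=\omega_{0}(\pi_{w}^{q}(C_{w\cdot\lambda,\lambda}^{\lambda}))$ of absolute value $1$ (hence nonzero) by Lemma~\ref{lem2}$(i)$. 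Factoring $\chi=\zeta\circ\tau_{q}$ with $\zeta:C(\mathrm T)\twoheadrightarrow C(\mathrm X)$ dual to a closed embedding $\mathrm X\hookrightarrow\mathrm T$, one has $\chi(C_{\lambda,\lambda}^{\lambda})=\zeta(z_{\lambda})$ where $z_{\lambda}$ is the $\mathrm T$-character at weight $\lambda$. Letting $\lambda$ range over $P_{+}$ and closing under adjoints and products produces every $z_{\mu}|_{\mathrm X}$ for $\mu\in P$, and these separate the points of $\mathrm T$ (hence of $\mathrm X$). Stone--Weierstrass then gives $p_{0}\otimes C(\mathrm X)\subseteq\Bb_{w,\chi}^{q}$.

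\textbf{Step 3.} For $a,b\in\mathbb{C}[\mathrm G]_{q}$ and $f\in C(\mathrm X)$, the element
$$(\pi_{w}^{q}\boxtimes\chi)(a)(p_{0}\otimes f)(\pi_{w}^{q}\boxtimes\chi)(b^{*})\in\Bb_{w,\chi}^{q}$$
coincides, in $C(\mathrm X,\mathcal{B}(\Hh_{w}))$, with $t\mapsto f(t)\cdot|\pi_{w}^{q}(R_{\iota(t)}(a))e_{0}^{\otimes\elll(w)}\rangle\langle\pi_{w}^{q}(R_{\iota(t)}(b))e_{0}^{\otimes\elll(w)}|$, where $R_{s}$ is the right torus translation of~\eqref{raction} and $\iota:\mathrm X\hookrightarrow\mathrm T$ the embedding dual to $\zeta$. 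Fix $\xi,\eta\in\Hh_{w}$ and $\varepsilon>0$. For each $t_{0}\in\mathrm X$, since $R_{\iota(t_{0})}$ is a $*$-automorphism of $\mathbb{C}[\mathrm G]_{q}$ and $e_{0}^{\otimes\elll(w)}$ is cyclic for the irreducible $\pi_{w}^{q}$, there exist $a_{t_{0}},b_{t_{0}}\in\mathbb{C}[\mathrm G]_{q}$ whose values at $t_{0}$ lie within $\varepsilon$ of $\xi,\eta$; norm-continuity of $t\mapsto R_{\iota(t)}(a_{t_{0}})$ propagates the approximation to an open neighborhood of $t_{0}$. A finite subcover with subordinate partition of unity $\{\psi_{k}\}$ glues the local data: the element $\sum_{k}(\pi_{w}^{q}\boxtimes\chi)(a_{k})(p_{0}\otimes\psi_{k}f)(\pi_{w}^{q}\boxtimes\chi)(b_{k}^{*})\in\Bb_{w,\chi}^{q}$ lies within $O(\varepsilon)$ in operator norm of $|\xi\rangle\langle\eta|\otimes f$. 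Letting $\varepsilon\to 0$ and using closedness of $\Bb_{w,\chi}^{q}$ gives $|\xi\rangle\langle\eta|\otimes f\in\Bb_{w,\chi}^{q}$ for all $\xi,\eta,f$, and the closed linear span is precisely $\K_{w}\otimes C(\mathrm X)$, proving~\eqref{subset}.

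The hard part is Step 2, namely identifying the corner algebra as all of $C(\mathrm X)$. The delicate combinatorics is the near-complete cancellation in the Sweedler sum for $\phi_{C_{w\cdot\lambda,\lambda}^{\lambda}}$: the weight-orthogonality of Lemma~\ref{maxtau}$(iii)$ kills all but one term, while Lemma~\ref{lem2}$(i)$ certifies that the surviving coefficient is nonzero. These two facts together reduce the problem to the classical observation that $\mathrm T$-characters separate points, which is where Stone--Weierstrass takes over.
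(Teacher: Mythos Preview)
Your proof is correct. Steps~1 and~2 match the paper's argument essentially verbatim: both obtain $p_{0}\otimes I$ from functional calculus applied to $(\pi_{w}^{q}\boxtimes\chi)(\Upsilon_{w})=\pi_{w}^{q}(\Upsilon_{w})\otimes I$, and both then compress $(\pi_{w}^{q}\boxtimes\chi)(C_{w\cdot\lambda,\lambda}^{\lambda})$ by $p_{0}\otimes I$ and use the weight-orthogonality of Lemma~\ref{maxtau}(iii) to isolate a nonzero multiple of $p_{0}\otimes\chi(C_{\lambda,\lambda}^{\lambda})$, whence $p_{0}\otimes C(\mathrm X)\subseteq\Bb_{w,\chi}^{q}$. (A small cosmetic point: you first restrict to $\lambda\in P_{++}$ and then say ``letting $\lambda$ range over $P_{+}$''; either range suffices, since Lemma~\ref{lem2}(i) is stated for $P_{+}$ and the characters $z_{\lambda}$ with $\lambda\in P_{++}$ already separate points of $\mathrm T$.)

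The genuine divergence is in Step~3. The paper does not run your partition-of-unity construction; instead it invokes an external result (Proposition~5.5 of~\cite{stokman}) asserting that $\pi_{w}^{q}$ remains irreducible on the subalgebra $C(\mathrm{G/T})_{q}=\{a:(\iota\otimes\tau_{q})\Delta_{q}(a)=a\otimes I\}$. Since $\Upsilon_{w}\in C(\mathrm{G/T})_{q}$ already gives a nonzero compact in the image, irreducibility forces $\K_{w}\subseteq\pi_{w}^{q}(C(\mathrm{G/T})_{q})$, and because $(\pi_{w}^{q}\boxtimes\chi)|_{C(\mathrm{G/T})_{q}}=\pi_{w}^{q}|_{C(\mathrm{G/T})_{q}}\otimes I$ one gets $\K_{w}\otimes I\subseteq\Bb_{w,\chi}^{q}$ at once; multiplying against $p_{0}\otimes C(\mathrm X)$ then yields all of $\K_{w}\otimes C(\mathrm X)$. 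Your route replaces the Stokman citation by a hands-on argument: you write the fibre of $(\pi_{w}^{q}\boxtimes\chi)(a)(p_{0}\otimes f)(\pi_{w}^{q}\boxtimes\chi)(b^{*})$ at $t$ as $f(t)\,|\pi_{w}^{q}(R_{\iota(t)}a)e_{0}\rangle\langle\pi_{w}^{q}(R_{\iota(t)}b)e_{0}|$, use cyclicity of $e_{0}$ for the irreducible $\pi_{w}^{q}$ together with norm-continuity of $t\mapsto R_{\iota(t)}(a)$ to approximate $|\xi\rangle\langle\eta|$ locally, and glue with a partition of unity. This is entirely self-contained and avoids the dependence on~\cite{stokman}; the cost is a somewhat longer estimate. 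The paper's version is shorter but leans on that reference. Either way the conclusion~\eqref{subset} follows.
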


\begin{proof}
If we, for any $\lambda\in P_{++},$ extend $\xi_{\lambda},\xi_{w\cdot \lambda}$ to an orthonormal basis of $V_{\lambda}^{q},$ then we get from~\eqref{xi1} and Lemma~\ref{maxtau} $(iii)$ that $$(\pi_{w}^{q}\boxtimes \tau_{q})\left(C_{w\cdot\lambda,\lambda}^{\lambda}\right)=\pi_{w}^{q}\left(C_{w\cdot\lambda,\lambda}^{\lambda}\right)\otimes \tau_{q}\left(C_{\lambda,\lambda}^{\lambda}\right).$$ By Lemma~\ref{lem3} and Lemma~\ref{lem4}, if $p_{0}$ is the orthogonal projection onto $e_{0}^{\otimes \elll(w)},$ then $p_{0}\otimes I\in \Bb_{w,\tau_{q}}^{q}.$ Thus $$(p_{0}\otimes I)\left((\pi_{w}^{q}\boxtimes \tau_{q})\left(C_{w\cdot\lambda,\lambda}^{\lambda}\right)\right)(p_{0}\otimes I)\in \Bb_{w,\tau_{q}}^{q}$$ and is by Lemma~\ref{lem2} a non-zero constant multiple of $p_{0}\otimes \tau_{q}\left(C_{\lambda,\lambda}^{\lambda}\right).$ Since the functions $$\begin{array}{cccc}\tau_{q}\left(C_{\lambda,\lambda}^{\lambda}\right)\in C(\mathrm T),& \lambda\in P_{++}\end{array}$$
are generating $C(\mathrm T)$ as a $C^{*}$-algebra, it follows that \begin{equation}\label{p0}p_{0}\otimes C(\mathrm T)\subseteq \Bb_{w,\tau_{q}}^{q}.\end{equation}
By (Proposition 5.5 in~\cite{stokman}), the restriction of an irreducible $*$-representation of $C(\mathrm G)_{q}$ is still irreducible when restricted to 
$$
C(\mathrm{G/T})_{q}\overset{\ddef}{=}\{a\in C(\mathrm G)_{q}: (\iota\otimes \tau_{q})\circ \Delta_{q}(a)=a\otimes I \}.
$$
It follows that 
$$\K_{w}\otimes I\subseteq (\pi_{w}^{q}\boxtimes \tau_{q})(C(\mathrm{G/T})_{q})\subseteq (\pi_{w}^{q}\boxtimes \tau_{q})(C(\mathrm{G})_{q}),$$
and together with~\eqref{p0}, this gives~\eqref{subset}.

\end{proof}

\begin{proof}[Proof of Proposition~\ref{prop2}]
Recall the definition of $\mathrm T_{v}^{w}\subseteq \mathrm T.$ For $v\lhd w,$ we let $\chi_{v}^{w}$ be the composition $$\chi_{v}^{w}:C(\mathrm{G)_{q}}\overset{\tau_{q}}{\longrightarrow}C(\mathrm T)\longrightarrow C(\mathrm T_{v}^{w}),$$ where the second $*$-homomorphism is the restriction of $f\in C(\mathrm T)$ to $\mathrm T_{v}^{w}\subseteq \mathrm T.$ By definition, these $*$-representations are commutative and $q$-invariant. To prove the existence of the $*$-homomorphism, we use Theorem~\ref{ntthm} with $\mathrm Y=\{1\}$ (the set containing only the identity of $\mathrm T$). Thus for every $v\lhd w$ and $t\in \mathrm T_{v}^{w}$ we have that $\ker \pi_{w}^{q}\subseteq\ker \pi_{v}^{q}\boxtimes\chi_{t}$ and hence we can define a $*$-homomorphism 
$$\varphi_{v}^{w}:\Bb_{w}^{q}\longrightarrow \Bb_{v,\chi_{v}^{w}}^{q}$$
$$\begin{array}{ccc}\pi_{w}^{q}(x)\mapsto  (\pi_{v}^{q}\boxtimes \chi_{v}^{w})(x), & \text{for $x\in C(\mathrm{G)_{q}}.$}\end{array}$$
By Lemma~\ref{lem3}, we have $\pi_{w}^{q}(\Upsilon_{w})\neq 0$ and $\pi_{v}^{q}(\Upsilon_{w})=0$ and by Lemma~\ref{lem4} $$(\pi_{v}^{q}\boxtimes\chi_{v}^{w})(\Upsilon_{w})=\pi_{v}^{q}(\Upsilon_{w})\otimes I=0.$$  Hence the kernel of $\varphi_{v}^{w}$ is a non-trivial ideal of $\Bb_{w}^{q}$. By Lemma~\ref{lem5}, with $\chi=\epsilon_{q},$ we have $\K_{w}\subset \Bb_{w}^{q}.$ Thus any non-trivial ideal of $\Bb_{w}^{q}$ must contain $\K_{w}$ and therefore $\K_{w}\subseteq\ker \varphi_{v}^{w}.$ Now consider the $*$-homomorphism $$\sumrep{}\varphi_{v}^{w}:\Bb_{w}^{q}\longrightarrow \prod_{v\lhd w}\Bb_{v,\chi_{v}^{w}}^{q}.$$
By the definition of the $\varphi_{v}^{w}$'s, it follows that 
\begin{equation}\label{factors}
(\sumrep{}\varphi_{v}^{w})\circ \pi_{w}^{q}=\sumrep{}(\pi_{v}^{q}\boxtimes\chi_{v}^{w}).
\end{equation}
As $\K_{w}\subset \ker \sumrep{}\varphi_{v}^{w}$ we can factor $\sumrep{}\varphi_{v}^{w}=:\eta_{w}^{q}\circ p_{w}$ for a $*$-homomorphism
$$\eta_{w}^{q}:\Bb_{w}^{q}/\K_{w}\longrightarrow \sumrep{}(\pi_{v}^{q}\boxtimes \chi_{v}^{w})(C(\mathrm{G)_{q}}) .$$ From~\eqref{factors}, it follows that~\eqref{isomorph} holds. Clearly, by~\eqref{factors} $\eta_{w}^{q}$ is surjective. Hence we only need to show that the kernel of $\eta_{w}^{q}$ is trivial. By definition, this is the same as showing
\begin{equation}\label{compact1}
\ker \sumrep{}\varphi_{v}^{w}=\K_{w}.
\end{equation}
In order to prove this, we are going to show that, for any $x\in C(\mathrm{G)_{q}},$ if $(\pi_{v}^{q}\boxtimes \chi_{t})(x)=0$ for all $v\lhd w$ and $t\in \mathrm T_{v}^{w},$ then also $(\pi_{\sigma}^{q}\boxtimes \chi_{s})(x)=0$ for all $\sigma<w$ and $s\in \mathrm T_{\sigma}^{w}.$ To see this, notice that by~\eqref{mult}, there is a $\sigma\leq v\lhd w$ such that we have $s=s_{1}s_{2}$ for $s_{1}\in\mathrm T_{v}^{w}$ and $s_{2}\in \mathrm T_{\sigma}^{v}.$ As we have $(\pi_{v}^{q}\boxtimes \chi_{s_{1}})(x)=0$ we get from Theorem~\ref{ntthm} that also $(\pi_{r}^{q}\boxtimes \chi_{s})(x)=0.$ The equality~\eqref{compact1} now follows from Lemma~\ref{ntlem}.
\end{proof}
\begin{prop}\label{prop3}
Let $\mathcal S\subseteq W$ be a subset where all the elements have the same length i.e. $\elll(w)=\elll(v)$ for all $w,v\in \mathcal S.$ Moreover, assume that for each $v\in \mathcal S,$ we have a commutative $*$-homomorphism
$\chi_{v}:C(\mathrm{G)_{q}}\rightarrow C(\mathrm X_{v})$ such that $\chi_{v}(C(\mathrm{G)_{q}})=C(\mathrm X_{v}).$ 
Then 
\begin{equation}\label{elements}
\prod_{v\in \mathcal S}\K_{v}\otimes C(\mathrm X_{v})\subseteq \underset{v\in \mathcal S}{\sumr}(\pi_{v}\boxtimes \chi_{v})(C(\mathrm{G)_{q}})\subseteq \prod_{v\in \mathcal S}\Bb_{v,\chi_{v}}^{q}.
\end{equation}
\end{prop}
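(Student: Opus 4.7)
The second inclusion in~\eqref{elements} is immediate from the definitions of the $\Bb^{q}_{v,\chi_{v}}.$ For the first inclusion, the plan is to localize to a single summand using the invariant elements $\Upsilon_{v}$ from Lemma~\ref{lem3} and then apply Lemma~\ref{lem5} inside that summand. Throughout, write $\mathcal{C}:=\overline{\underset{v\in \mathcal{S}}{\sumr}(\pi_{v}^{q}\boxtimes \chi_{v})(C(\mathrm{G)_{q}})}.$

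Fix $v_{0}\in \mathcal{S}.$ Since $\elll(v)=\elll(v_{0})$ for every $v\in \mathcal{S},$ no $v\in \mathcal{S}\setminus\{v_{0}\}$ satisfies $v\geq v_{0}$ in the Bruhat order, so Lemma~\ref{lem3}(iii) yields $\pi_{v}^{q}(\Upsilon_{v_{0}})=0$ for each such $v,$ while $\pi_{v_{0}}^{q}(\Upsilon_{v_{0}})$ is a compact positive contraction with $1$ as an isolated simple eigenvalue and eigenvector $e_{0}^{\otimes \elll(v_{0})}.$ Because $\Upsilon_{v_{0}}\in \mathbb{C}[\mathrm{G]_{q}^{\textsc{inv}}},$ Lemma~\ref{lem4} gives $(\pi_{v}^{q}\boxtimes \chi_{v})(\Upsilon_{v_{0}})=\pi_{v}^{q}(\Upsilon_{v_{0}})\otimes I$ for every $v\in \mathcal{S},$ so
$$A_{v_{0}}:=\underset{v\in \mathcal{S}}{\sumr}(\pi_{v}^{q}\boxtimes \chi_{v})(\Upsilon_{v_{0}})\in \mathcal{C}$$
has $v_{0}$-coordinate $\pi_{v_{0}}^{q}(\Upsilon_{v_{0}})\otimes I$ and vanishes in every other coordinate. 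The spectrum of $\pi_{v_{0}}^{q}(\Upsilon_{v_{0}})$ is contained in $[0,1-\delta]\cup\{1\}$ for some $\delta>0,$ with rank-one spectral projection $p_{0}$ onto $e_{0}^{\otimes \elll(v_{0})}$ at the eigenvalue $1.$ Choose any continuous $g:[0,1]\to[0,1]$ with $g(0)=0,$ $g\equiv 0$ on $[0,1-\delta],$ and $g(1)=1.$ Continuous functional calculus inside the $C^{*}$-algebra $\mathcal{C}$ then yields $e_{v_{0}}:=g(A_{v_{0}})\in \mathcal{C},$ whose $v_{0}$-coordinate equals $g(\pi_{v_{0}}^{q}(\Upsilon_{v_{0}}))\otimes I=p_{0}\otimes I$ (using $g(T\otimes I)=g(T)\otimes I$ when $g(0)=0$) and which vanishes at every other coordinate.

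For every $b\in C(\mathrm{G)_{q}}$ the products $e_{v_{0}}\cdot\underset{v\in \mathcal{S}}{\sumr}(\pi_{v}^{q}\boxtimes \chi_{v})(b)$ and $\underset{v\in \mathcal{S}}{\sumr}(\pi_{v}^{q}\boxtimes \chi_{v})(b)\cdot e_{v_{0}}$ belong to $\mathcal{C}$ and are supported only at coordinate $v_{0},$ with values $(p_{0}\otimes I)(\pi_{v_{0}}^{q}\boxtimes \chi_{v_{0}})(b)$ and $(\pi_{v_{0}}^{q}\boxtimes \chi_{v_{0}})(b)(p_{0}\otimes I)$ respectively. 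Lemma~\ref{lem5} gives $\K_{v_{0}}\otimes C(\mathrm{X}_{v_{0}})\subseteq \Bb^{q}_{v_{0},\chi_{v_{0}}},$ so by approximating with suitable $b$ we conclude that, for every $k\in \K_{v_{0}}\otimes C(\mathrm{X}_{v_{0}}),$ the elements supported at coordinate $v_{0}$ with values $(p_{0}\otimes I)k$ and $k(p_{0}\otimes I)$ lie in $\mathcal{C}.$ Taking $k=|\alpha\rangle\langle\beta|\otimes f$ yields in particular elements of the form $|e_{0}^{\otimes \elll(v_{0})}\rangle\langle\beta|\otimes f$ and $|\alpha\rangle\langle e_{0}^{\otimes \elll(v_{0})}|\otimes f$ at coordinate $v_{0};$ multiplying such pairs inside $\mathcal{C}$ produces $|\alpha\rangle\langle\beta|\otimes fg$ for arbitrary $\alpha,\beta\in \Hh_{v_{0}}$ and $f,g\in C(\mathrm{X}_{v_{0}}),$ and linear combinations of these are norm-dense in $\K_{v_{0}}\otimes C(\mathrm{X}_{v_{0}}).$ Hence the $v_{0}$-slice of $\mathcal{C}$ contains all of $\K_{v_{0}}\otimes C(\mathrm{X}_{v_{0}}),$ and summing over $v_{0}\in \mathcal{S}$ (which is finite, since $W$ is) delivers the first inclusion. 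The main obstacle is the construction of $e_{v_{0}},$ which critically uses the isolated-simple-eigenvalue conclusion of Lemma~\ref{lem3}(ii) together with the common-length hypothesis forcing $\pi_{v}^{q}(\Upsilon_{v_{0}})=0$ for $v\in\mathcal{S}\setminus\{v_{0}\};$ once this localizing element is in hand, Lemma~\ref{lem5} at the single coordinate $v_{0}$ and elementary $C^{*}$-algebraic manipulation finish the argument.
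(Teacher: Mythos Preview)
Your argument is correct and follows the same localization strategy as the paper: use the invariant element $\Upsilon_{v_{0}}$ together with Lemmas~\ref{lem3}, \ref{lem4} and~\ref{lem5} to isolate the $v_{0}$-coordinate and then fill in $\K_{v_{0}}\otimes C(\mathrm X_{v_{0}})$ there.

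The one difference is in execution. You pass through functional calculus to extract the rank-one spectral projection $p_{0}\otimes I$ and then rebuild $\K_{v_{0}}\otimes C(\mathrm X_{v_{0}})$ from rank-one pieces. The paper avoids this detour entirely: since $\pi_{v_{0}}^{q}(\Upsilon_{v_{0}})$ is compact with \emph{dense range} (Lemma~\ref{lem3}(i)), one has directly
\[
\overline{(\pi_{v_{0}}^{q}(\Upsilon_{v_{0}})\otimes I)\,\Bb_{v_{0},\chi_{v_{0}}}^{q}}=\K_{v_{0}}\otimes C(\mathrm X_{v_{0}}),
\]
because on the one hand multiplication by a compact operator lands in $\K_{v_{0}}\otimes C(\mathrm X_{v_{0}})$, and on the other hand dense range implies that $(\pi_{v_{0}}^{q}(\Upsilon_{v_{0}})\otimes I)(\K_{v_{0}}\otimes C(\mathrm X_{v_{0}}))$ is already dense in $\K_{v_{0}}\otimes C(\mathrm X_{v_{0}})$. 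Thus the closure of $\underset{v\in\mathcal S}{\sumr}(\pi_{v}^{q}\boxtimes\chi_{v})(\Upsilon_{v_{0}}\,C(\mathrm G)_{q})$ is exactly $\K_{v_{0}}\otimes C(\mathrm X_{v_{0}})$ sitting in the $v_{0}$-slot. Your approach uses the isolated-eigenvalue information from Lemma~\ref{lem3}(ii); the paper's uses only the dense-range information from Lemma~\ref{lem3}(i), which makes for a shorter argument with no spectral gap or functional calculus needed.
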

\begin{proof}
As all the elements of $\mathcal S$ have the same length, they must be mutually non-comparable in the partial ordering of $W.$ It follows from Lemma~\ref{lem3} and Lemma~\ref{lem4} that for $v\in \mathcal S,$ we have $(\pi_{v}^{q}\boxtimes\chi_{v})(\Upsilon_{v})=\pi_{v}^{q}(\Upsilon_{v})\otimes I\neq 0$ and $(\pi_{w}^{q}\boxtimes \chi_{w})(\Upsilon_{v})=0$ for any other $w\in \mathcal S.$ As $\pi_{v}^{q}(\Upsilon_{v})$ is a compact operator with dense range, it follows from Lemma~\ref{lem5} that 
$$\overline{(\pi_{v}^{q}\boxtimes\chi_{v})(\Upsilon_{v}(C(\mathrm{G)_{q}}))}=\K_{v}\otimes C(\mathrm X_{v})$$ 
$$\begin{array}{cccc}\overline{(\pi_{w}^{q}\boxtimes\chi_{w})(\Upsilon_{v}(C(\mathrm{G)_{q}}))}=\{0\},& \text{$w\in \mathcal S$ such that $w\neq v.$}\end{array}$$
This gives~\eqref{elements}.
\end{proof}
\subsection{\bf\textsc{Continuous Deformations}}
\begin{lem}\label{field}
There are invertible co-algebra maps
$$\begin{array}{ccccc}\theta^{q}:\mathbb{C}[\mathrm{G}]\longrightarrow \mathbb{C}[\mathrm{G]_{q}}, & q\in (0,1)\end{array}$$
such that for every $w\in W,$ every $q$-independent commutative $*$-representation $\chi^{q}:\mathbb{C}[\mathrm{G]_{q}}\to C(\mathrm X)$ and any fixed $f\in \mathbb{C}[G],$ the map
\begin{equation}\label{funcq}
q\in (0,1)\mapsto (\pi_{w}^{q}\boxtimes \chi^{q})(\theta^{q}(f))\in \Bb_{w,\chi^{q}}^{q}\subseteq \mathcal{B}(\Hh_{w})\otimes C(\mathrm X)
\end{equation}
is continuous.
\end{lem}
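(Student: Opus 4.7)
The plan is to define $\theta^q$ via the Peter-Weyl decomposition using weight-compatible bases, and then exploit its coalgebra property to reduce the continuity statement to continuity of the building blocks $\pi_i^q\circ\theta^q$ and $\chi^q\circ\theta^q$ on individual matrix coefficients. Concretely, for each $\lambda\in P_+$ and each weight $\gamma\in P,$ I would fix orthonormal bases of $V_\lambda(\gamma)$ and $V_\lambda^q(\gamma)$ (which have the same dimension), and assemble them into orthonormal bases $\{\xi_i\}$ of $V_\lambda$ and $\{\xi_i^q\}$ of $V_\lambda^q$ indexed so that $\xi_i$ and $\xi_i^q$ lie in corresponding weight subspaces, with the $\xi_i^q$ chosen to depend continuously on $q$ (e.g.\ by applying words in the $F_j$'s to a highest-weight vector and extracting a basis by a $q$-continuous Gram-Schmidt procedure). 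Set $\theta^q(C_{\xi_i,\xi_j}^\lambda):=C_{\xi_i^q,\xi_j^q}^\lambda$ and extend linearly. By~\eqref{xi1}, the coproduct of a matrix coefficient has the same combinatorial form regardless of $q,$ so $\theta^q$ is a coalgebra isomorphism.

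Since the elements $C_{\xi_i,\xi_j}^\lambda$ span $\mathbb{C}[\mathrm G],$ it is enough to check continuity for $f$ of this form. Fix a reduced decomposition $w=s_{j_1}\cdots s_{j_m}.$ Because $\theta^q$ is a coalgebra map, $\Delta_q^{(m+1)}\circ\theta^q=(\theta^q)^{\otimes(m+1)}\circ\Delta^{(m+1)},$ and $\Delta^{(m+1)}(C_{\xi_i,\xi_j}^\lambda)$ is a finite sum of simple tensors of classical matrix coefficients that does not depend on $q.$ Applying $\pi_{j_1}^q\otimes\cdots\otimes\pi_{j_m}^q\otimes\chi^q$ therefore writes $(\pi_w^q\boxtimes\chi^q)(\theta^q(f))$ as a finite sum of simple tensors whose factors are of the form $\pi_{j_k}^q(\theta^q(C_{\xi_a,\xi_b}^\lambda))$ and $\chi^q(\theta^q(C_{\xi_a,\xi_b}^\lambda)).$ It thus suffices to establish norm-continuity in $q$ of these individual factor maps on each matrix coefficient.

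The commutative factor is immediate: since $\chi^q$ is $q$-independent, $\chi^q=\zeta\circ\tau_q$ for a fixed $\zeta,$ so one only needs continuity of $\tau_q\circ\theta^q.$ By Lemma~\ref{maxtau}~(iii), $\tau_q(C_{\xi_a^q,\xi_b^q}^\lambda)$ vanishes unless both vectors lie in the same weight space, and by~\eqref{maxtor2} the value in the surviving case is precisely the character of $\mathrm T$ determined by that common weight; because the bases were chosen weight-compatibly, this value is in fact independent of $q,$ so $\tau_q\circ\theta^q$ is itself $q$-independent. For the $\mathrm{SU}_2$-factors, write $\pi_i^q=\Pi_{q_i}\circ\varsigma_i^q.$ In a weight basis, the action of $E_i,F_i,K_i\in\mathrm{U}_{q_i}(\mathfrak{su}_2)\subseteq\mathrm{U}_q(\mathfrak{g})$ on $\xi_a^q$ has matrix entries given by rational expressions in $q$ (involving the $q$-binomials $[\,\cdot\,;\,\cdot\,]_{q_i}$), hence continuous in $q\in(0,1).$ Consequently, $\varsigma_i^q(\theta^q(C_{\xi_a,\xi_b}^\lambda))\in\mathbb{C}[\mathrm{SU_2}]_{q_i}$ can be expanded as a polynomial in the generators $t_{rs}^{q_i}$ whose coefficients depend continuously on $q.$ Finally, the operators $S,C_{q_i},d_{q_i}$ in~\eqref{SC} vary norm-continuously in $q$ (uniform estimates $\|C_q-C_{q'}\|$ and $\|d_q-d_{q'}\|$ on compact subintervals of $(0,1)$ are elementary), and $\Pi_{q_i}$ applied to such a polynomial therefore produces an operator that is norm-continuous in $q,$ closing the argument.

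The delicate step is the $\mathrm{SU}_2$-factor analysis: one must arrange that with a fixed classical choice $\{\xi_i\},$ there is a coherent family of weight-compatible bases $\{\xi_i^q\}$ of $V_\lambda^q$ in which all structure constants of the $\mathrm{U}_q(\mathfrak g)$-action vary continuously in $q,$ and then propagate this through the possibly non-trivial branching of $V_\lambda^q$ as an $\mathrm{U}_{q_i}(\mathfrak{su}_2)$-module implicit in computing $\varsigma_i^q.$ The resolution is that the quantum Serre relations and the $q$-binomial coefficients governing the module structure are rational functions of $q$ on $(0,1),$ so any weight basis obtained by applying polynomial words in the $F_j$'s to a highest-weight vector and orthonormalizing can be chosen to depend continuously on $q;$ after that, the computation of $\pi_{j_k}^q(\theta^q(C_{\xi_a,\xi_b}^\lambda))$ reduces to a finite linear combination of operators whose $q$-dependence is manifestly continuous.
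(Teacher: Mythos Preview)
Your strategy matches the paper's: build an invertible coalgebra map and use the coalgebra property to factor $(\pi_w^q\boxtimes\chi^q)\circ\theta^q$ through $\Delta^{(m+1)}$ into tensor products of $\pi_{j_k}^q\circ\theta^q$'s and one commutative factor, then check continuity termwise. The commutative factor is handled the same way in both.

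The difference is in the $\mathrm{SU}_2$ factors. The paper does not argue directly; it imports from~\cite{nt2} a particular coalgebra map $\vartheta_q$ (playing the role of your $\theta^q$) together with $q$-continuous coalgebra automorphisms $\gamma_i^q$ of $\mathbb{C}[\mathrm{G}]$ and $\kappa_{q_i}$ of $\mathbb{C}[\mathrm{SU}_2]$ fitting into a commutative square that forces $\kappa_{q_i}^{-1}\circ\varsigma_i^q\circ\vartheta_q=\varsigma_i\circ(\gamma_i^q)^{-1}$. This routes the quantum restriction $\varsigma_i^q\circ\vartheta_q$ through the \emph{fixed} classical restriction $\varsigma_i$ sandwiched between $q$-continuous maps, so continuity of $\Pi_{q_i}\circ\varsigma_i^q\circ\vartheta_q$ is immediate from continuity of $\Pi_{q_i}\circ\kappa_{q_i}$ on generators. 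Your direct argument---that continuity of the $\mathrm{U}_{q_i}(\mathfrak{su}_2)$-action matrices on $V_\lambda^q$ in a weight basis yields a $q$-continuous expansion of $\varsigma_i^q(C_{\xi_a^q,\xi_b^q}^\lambda)$ as a polynomial in the $t_{rs}^{q_i}$---is correct in principle, but the step you yourself flag as ``delicate'' is precisely what the diagram from~\cite{nt2} packages: to go from continuous action matrices to a continuous polynomial expansion in $t_{rs}^{q_i}$ one must show that the $\mathrm{U}_{q_i}(\mathfrak{su}_2)$-isotypic decomposition of $V_\lambda^q$ and the identifications of summands with standard modules $V_\mu^{q_i}$ can be made $q$-continuously (e.g.\ via the $q$-Casimir), and then invoke the explicit $q$-continuous polynomial form of the $\mathrm{SU}_2$ matrix coefficients in each $V_\mu^{q_i}$. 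Your final paragraph gestures at this but does not carry it out; the paper sidesteps the issue by citation. Both routes work, but yours leaves the heaviest lifting as an exercise.
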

\begin{proof}
We will refer to the proof of Theorem~1.2 in~\cite{nt2}. We remark that our notation differs from theirs. Let $t_{ij}^{q},$ $i,j=1,2$ be the generators of $\mathbb{C}[\mathrm{SU_{2}]_{q}}$ for $q\in (0,1].$  It follows from the proof that there are invertible co-algebra maps 
$$
\begin{array}{cccc}
\kappa_{q}:\mathbb{C}[\mathrm{SU_{2}]}\longrightarrow \mathbb{C}[\mathrm{SU_{2}]_{q}}, & q\in(0,1], & \kappa_{1}=\id,
\end{array}
$$
such that $\kappa_{q}(t_{ij}^{1})=t_{ij}^{q}$ and for every $f\in \mathbb{C}[\mathrm{SU_{2}]},$ the image $\kappa_{q}(f)$ is a non-commutative polynomial in $t_{ij}^{q},$ $i,j=1,2,$ with coefficients continuous in $q.$ Moreover, there exists an invertible co-algebra map $\vartheta_{q}:\mathbb{C}[\mathrm{G]}\to  \mathbb{C}[\mathrm{G]_{q}}$, such that for every $i=1,\dots,n$, there exists a continuous family of invertible co-algebra morphisms $\gamma^{q}_{i}$ of $\mathbb{C}[\mathrm{G}]$ that makes the following diagram commute
\begin{equation}\label{diagram1}
\begin{xy}\xymatrixcolsep{3pc}\xymatrixrowsep{3pc}\xymatrix{
\mathbb{C}[\mathrm{SU_{2}]}\ar[d]_*{\kappa_{q_{i}}}& \mathbb{C}[\mathrm{G]}\ar[l]_*{\varsigma_{i}}\ar[r]^*{\gamma^{q}_{i}}  & \mathbb{C}[\mathrm{G]}\ar[dl]^*{\vartheta_{q}}\\
\mathbb{C}[\mathrm{SU_{2}]_{q_{i}}} & \mathbb{C}[\mathrm{G]_{q}}\ar[l]^*{\varsigma^{q}_{i}}&.
}\end{xy}
\end{equation}
This gives that \begin{equation}\label{kapvar}\kappa_{q_{i}}^{-1}\circ\varsigma_{i}^{q}\circ\vartheta_{q}=\varsigma_{i}\circ (\gamma_{i}^{q})^{-1}\end{equation} varies continuously on $q\in (0,1].$
The operators $C_{q}$ and $d_q,$ given by~\eqref{SC} varies continuously on $q\in (0,1).$ Hence, by~\eqref{SC1}, it follows that the functions $q\in(0,1)\mapsto\Pi_{q}\circ \kappa_{q}(t_{ij})=\Pi_{q}(t_{ij}^{q})\subseteq \mathcal{B}(\ell^{2}(\mathbb{Z}_{+})),$ for $i,j=1,2,$ are continuous. Thus, for any fixed $f\in \mathbb{C}[\mathrm{SU_{2}]},$ we have a continuous function
$$
q\in (0,1)\mapsto \Pi_{q}\circ \kappa_{q}(f)\in  \mathcal{B}(\ell^{2}(\mathbb{Z}_{+})).
$$
Composing this with~\eqref{kapvar}, we get that for every $f\in \mathbb{C}[\mathrm{G}],$ the function
$$
q\in (0,1)\mapsto \Pi_{q}\circ \varsigma_{i}^{q} \circ \vartheta_{q}( f)\in  \mathcal{B}(\ell^{2}(\mathbb{Z}_{+}))
$$
is continuous.
As  the maps in the diagram~\eqref{diagram1} are all (at least) co-algebra maps, so that $\Delta_{q}\circ\vartheta_{q}=(\vartheta_{q}\otimes \vartheta_{q})\circ \Delta,$ it follows for $f\in  \mathbb{C}[\mathrm{G]}$ and $w\in W$ with reduced presentation $w=s_{j_{1}}\cdots s_{j_{m}},$ that
$$
\pi_{w}^{q}(\vartheta_{q}(f))=(\pi_{j_{1}}^{q}\otimes \cdots\otimes \pi_{j_{m}}^{q})\circ \Delta^{(m)}_{q}(\vartheta_{q}(f))=
$$
$$
(\Pi_{q_{j_{1}}}\otimes \cdots\otimes \Pi_{q_{j_{m}}})\circ ((\varsigma_{j_{1}}^{q}\circ \vartheta_{q})\otimes\cdots \otimes(\varsigma_{j_{m}}^{q}\circ \vartheta_{q}))\circ \Delta^{(m)}(f)=
$$
$$
((\Pi_{q_{j_{1}}}\circ \kappa_{q_{j_{1}}}\circ \varsigma_{j_{1}})\otimes \cdots\otimes (\Pi_{q_{j_{m}}}\circ \kappa_{q_{j_{m}}}\circ \varsigma_{j_{m}}))\circ \Delta^{(m)}(f)
$$
and hence the function $q\in (0,1)\mapsto \pi_{w}^{q}(\vartheta_{q}(f))$ is continuous. Combining this with~\eqref{tauq}, it follows that $q\in (0,1)\mapsto\tau_{q}(\vartheta_{q}(f))\in C(\mathrm T)$ is also continuous. Thus 
$$
q\in(0,1)\mapsto(\pi_{w}^{q}\boxtimes \tau_{q})(\vartheta_{q}(f))=((\pi_{w}^{q}\circ \vartheta_{q})\otimes (\tau_{q}\circ \vartheta_{q}))\circ \Delta(f)\in \mathcal{B}(\Hh_{w})\otimes C(\mathrm T)
$$
is continuous. That~\eqref{funcq} holds for all $q$-independent maps follows by the factorization $\chi^{q}=\zeta\circ \tau_{q}.$ 
\end{proof}
Assume we have two subsets $\mathrm T_{1},\mathrm T_{2}\subseteq \mathrm T$ and $\mathrm T_{3}=\mathrm T_{1}\mathrm T_{2}$ (the point-wise multiplication). Let us denote by $\chi_{i},$ for $i=1,2,3$ the $*$-homomorphism $\mathbb{C}[\mathrm{G]_{q}}\to C(\mathrm T_{i}),$ $i=1,2,3,$ given by restriction of $\tau_{q}$ to $\mathrm T_{i}.$ It follows that we have an identification
\begin{equation}\label{multset}
\chi_{1}\boxtimes \chi_{2}\sim\chi_{3}
\end{equation}
in the sense that $\chi_{3}$ is the unique $*$-homomorphism with the property that, using the isomorphism $C(\mathrm T_{1})\otimes C(\mathrm T_{2})\cong C(\mathrm T_{1}\times T_{2}),$ we have
\begin{equation}\label{tensors2}
\begin{array}{cccc}
(\chi_{1}\boxtimes \chi_{2})(a)(t_{1},t_{2})=\chi_{3}(a)(t_{1}t_{2}), & t_{1}\in \mathrm T_{1}, & t_{2}\in\mathrm  T_{2}, & a\in \mathbb{C}[\mathrm{G]_{q}}.
\end{array}
\end{equation}
The multiplication map $m:\mathrm T_{1}\times \mathrm T_{2}\to \mathrm T_{3}$ gives an injective $*$-homomorphism 
$$
C(\mathrm T_{3})\overset{m^{*}}{\longrightarrow}C(\mathrm T_{1})\otimes C(\mathrm T_{2}) \cong C(\mathrm T_{1}\times T_{2})
$$
and it follows from~\eqref{tensors2} that $\chi_{1}\boxtimes\chi_{2}$ factors as
$$
\mathbb{C}[\mathrm{G]_{q}}\overset{\chi_{3}}{\longrightarrow} C(\mathrm T_{3})\overset{m^{*}}{\longrightarrow} C(\mathrm T_{1})\otimes C(\mathrm T_{2}).
$$
Furthermore, if for two subsets $\mathrm T_{1},\mathrm T_{2}\subseteq \mathrm T,$ we let $\mathrm T_{3}=\mathrm T_{1}\cup \mathrm T_{2}$ and denote by $\chi_{i},$ $i=1,2,3,$ the $*$-homomorphisms $\mathbb{C}[\mathrm{G]_{q}}\to C(\mathrm T_{i}),$ $i=1,2,3,$ then we have an identification
\begin{equation}\label{sumset}
\chi_{1}\oplus \chi_{2}\sim\chi_{3}
\end{equation}
via the injective $*$-homomorphism $C(\mathrm T_{1}\cup T_{2})\to C(\mathrm T_{1})\oplus C(\mathrm T_{2})$ determined by the two inclusions $\mathrm T_{i}\subseteq \mathrm T_{3}$ for $i=1,2.$ Thus $\chi_{3}$ satisfies
\begin{equation}\label{tensors}
\begin{array}{cccc}
\chi_{1}(a)=\chi_{3}(a)|_{\mathrm T_{1}}, &\chi_{2}(a)= \chi_{3}(a)|_{\mathrm T_{2}}, & a\in \mathbb{C}[\mathrm{G]_{q}}
\end{array}
\end{equation}
where $\chi_{3}(a)|_{\mathrm T_{i}},$ $i=1,2$ denotes the restriction of the function $\chi_{3}(a)\in C(\mathrm T_{3})$ to the subset $\mathrm T_{i}\subseteq \mathrm T_{3}.$
\\

For a path $v\overset{\gamma}{\leadsto}w,$ let us denote by $\chi_{\gamma}$ the commutative $q$-independent $*$-representation $\mathbb{C}[\mathrm{G]_{q}}\to C(\mathrm T_{\gamma}).$ If we have paths $v\overset{\gamma_{1}}{\leadsto}r\overset{\gamma_{2}}{\leadsto}w,$ then it follows from~\eqref{gamma} and~\eqref{multset} that if we have the composition of paths $\gamma=\gamma_{1}\circ \gamma_{2},$
then
\begin{equation}\label{cancel}
\chi_{\gamma_{1}}\boxtimes\chi_{\gamma_{2}}\sim\chi_{\gamma}.
\end{equation}
\section{\bf\textsc{The Main Result}}
\begin{thm}\label{main} 
\
\begin{enumerate}[(i)]
\item For all $q,s\in (0,1)$ and $w\in W,$ we have an inner $*$-automorphism $\Gamma_{w}^{s,q}:\mathcal{B}(\Hh_{w})\to \mathcal{B}(\Hh_{w})$ that restricts to a $*$-isomorphism $\Bb_{w}^{q}\to\Bb_{w}^{s},$ such that $\Gamma_{w}^{s,q}(\K_{w})=\K_{w}$ and we have $$\begin{array}{ccc}\Gamma_{w}^{s,t}\circ \Gamma_{w}^{t,q}=\Gamma_{w}^{s,q}, & \text{for all $s,t,q\in (0,1)$}\\ \Gamma_{w}^{q,q}=\id, & \text{for all $q\in(0,1)$}.\end{array}$$
Moreover, for all $q,s\in (0,1)$ and $w\in W,$ the following diagram commutes
\begin{equation}\label{arrows}
\begin{xy}\xymatrixcolsep{7pc}\xymatrixrowsep{3pc}\xymatrix{
B_{w}^{q}\ar[r]^*{\Gamma_{w}^{s,q}}\ar[d]_*{\eta_{w}^{q}\circ p_{w}}& B_{w}^{s}\ar[d]^*{\eta_{w}^{s}\circ p_{w}}\\
\prod_{v\lhd w}B_{v,\chi_{v}^{w}}^{q} \ar[r]_*{\prod_{v\lhd w}(\Gamma_{v}^{s,q}\otimes \iota)} & \prod_{v\lhd w}B_{v,\chi_{v}^{w}}^{s}
}\end{xy}
\end{equation}
where $\eta_{w}^{q}$ and $\chi_{v}^{w}$ are as in Proposition~\ref{prop2}. The $*$-isomorphisms $\Gamma_{w}^{s,q}$ are also continuous in the point-norm topology in the sense that, for fixed $q\in (0,1)$ and $y\in \Bb_{w}^{q},$ the function $s\in(0,1)\mapsto \Gamma^{s,q}_{w}(y)\in \mathcal{B}(\Hh_{w})$ is continuous.
\item If $\chi^{q}: \mathbb{C}[\mathrm{G]_{q}}\to C(\mathrm X),$ $q\in (0,1),$ are commutative $q$-independent $*$-homomorphisms, then the $*$-isomorphism $\Gamma_{w}^{s,q}\otimes \iota:\Bb_{w}^{q}\otimes C(\mathrm X)\to \Bb_{w}^{s}\otimes C(\mathrm X)$ restrics to a $*$-isomorphism $$\Gamma_{w}^{s,q}\otimes \iota:\Bb_{w,\chi}^{q}\longrightarrow\Bb_{w,\chi}^{s}.$$
\end{enumerate}
\end{thm}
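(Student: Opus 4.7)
The plan is to prove parts (i) and (ii) simultaneously by induction on $\elll(w)$. The base case $w=e$ is immediate: $\pi_{e}^{q}=\epsilon_{q}$ is one-dimensional, so $\Bb_{e}^{q}=\mathbb{C}$ and we take $\Gamma_{e}^{s,q}=\id$; part (ii) is then trivial. Assume the theorem holds for every $v\in W$ with $\elll(v)<\elll(w)$.

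For the inductive step I would first construct an isomorphism at the Calkin-quotient level and then lift it via Lemma~\ref{1}. The induction hypothesis (together with part (ii) applied to each commutative $q$-independent representation $\chi_{v}^{w}$ from Proposition~\ref{prop2}) yields $*$-isomorphisms $\Gamma_{v}^{s,q}\otimes\iota:\Bb_{v,\chi_{v}^{w}}^{q}\to\Bb_{v,\chi_{v}^{w}}^{s}$ for every $v\lhd w$. Their direct product
\[
F_{s,q}:=\prod_{v\lhd w}(\Gamma_{v}^{s,q}\otimes\iota)
\]
is a $*$-isomorphism of the ambient product $C^{*}$-algebras. The goal is to show that $F_{s,q}$ carries $X_{q}:=\eta_{w}^{q}(\Bb_{w}^{q}/\K_{w})$ onto $X_{s}$, producing the desired $\bar\Gamma_{w}^{s,q}:\Bb_{w}^{q}/\K_{w}\to\Bb_{w}^{s}/\K_{w}$.

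This image-preservation claim is the main obstacle, and I would handle it through the approximate-unit scheme described in the introduction. By Proposition~\ref{prop3} (applicable because the elements of $\{v:v\lhd w\}$ share a common length and hence are pairwise incomparable) the ideal $\prod_{v\lhd w}\K_{v}\otimes C(\mathrm T_{v}^{w})$ is contained in $X_{q}$ and provides an approximate unit. Iterating the inductively available commutative square \eqref{arrows} for each $v\lhd w$ unwinds $F_{s,q}$ modulo successive layers of compacts, reducing the verification to the length-zero stratum where the relevant representations factor through the $q$-independent Hopf morphism $\tau_{q}$, so the image is trivially preserved. Feeding this back through the approximate-unit manipulation---writing $F_{s,q}(x)=y+c$ with $y\in X_{s}$ and $c$ in the compact ideal, multiplying by $u_{i}\in X_{q}\cap \prod\K_{v}\otimes C(\mathrm T_{v}^{w})$ to transfer $c\cdot F_{s,q}(u_{i})$ into $X_{s}$, and taking $i\to\infty$---gives $F_{s,q}(X_{q})=X_{s}$.

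To conclude, for every $f\in\mathbb{C}[\mathrm G]$ Lemma~\ref{field} makes $\oplus_{v\lhd w}(\pi_{v}^{q}\boxtimes\chi_{v}^{w})(\theta^{q}(f))$ norm-continuous in $q$, and density of such elements in $X_{q}$ delivers point-norm continuity of $\bar\Gamma_{w}^{s,q}$. Since $\Bb_{w}^{q}/\K_{w}$ is of type $\mathrm I$ (by induction it embeds into a product of tensor products of type-$\mathrm I$ algebras $\Bb_{v}^{q}$ with commutative $C^{*}$-algebras, and Lemma~\ref{lem5} ensures that the compacts are present at every stage), Lemma~\ref{1} applies to the family $\{\bar\Gamma_{w}^{s,q}\}$ and produces the inner lift $\Gamma_{w}^{s,q}\in\mathrm{Inn}(\mathcal{B}(\Hh_{w}))$, automatically satisfying the multiplicativity relations, $\Gamma_{w}^{s,q}(\K_{w})=\K_{w}$, the diagram~\eqref{arrows}, and the point-norm continuity claim. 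For part (ii), observe that the $\boxtimes$-tensor $\chi_{v}^{w}\boxtimes\chi^{q}$ is again commutative and $q$-independent (both factors dissolve through $\tau_{q}$, a Hopf morphism), so the induction hypothesis (ii) applies to it; combining this with the inclusion $\K_{w}\otimes C(\mathrm X)\subseteq\Bb_{w,\chi}^{q}$ from Lemma~\ref{lem5} and a second approximate-unit argument analogous to the one above forces $\Gamma_{w}^{s,q}\otimes\iota$ to restrict to a $*$-isomorphism $\Bb_{w,\chi}^{q}\to\Bb_{w,\chi}^{s}$.
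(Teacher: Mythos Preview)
Your proposal is correct and follows essentially the same inductive architecture as the paper: induction on $\elll(w)$, passage to the Calkin quotient via Proposition~\ref{prop2}, a downward ``ladder'' argument reducing to the commutative stratum, point-norm continuity from Lemma~\ref{field}, and then the lift via Lemma~\ref{1}. Two minor implementation differences are worth flagging. First, for the image-preservation step you invoke the approximate-unit scheme from the introduction, whereas the paper (as it remarks there) avoids approximate units altogether: it observes that both $F_{s,q}(x)$ and $y$ lie in the embedded subalgebra $\prod_{v\lhd^{(i)}w}\mathcal{B}(\Hh_v)\otimes C(\mathrm T_v^w)$, so the remainder $c$ lands in $\prod_{v\lhd^{(i)}w}\K_v\otimes C(\mathrm T_v^w)$, which Proposition~\ref{prop3} already places inside $\Psi_i^s(\Bb_w^s)$. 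Second, for part (ii) the paper first treats the universal case $\chi^q=\tau_q$ by rerunning the entire ladder tensored with $C(\mathrm T)$ and then reduces a general $q$-independent $\chi^q$ via the factorization $\chi^q=\zeta\circ\tau_q$; your direct route through $\chi_v^w\boxtimes\chi^q$ is workable but requires the same ladder-plus-compacts argument you sketched for (i), now tensored with $C(\mathrm X)$, which is exactly what the paper does for $\mathrm X=\mathrm T$.
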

\begin{proof}
We will prove $(i)$ and $(ii)$ simultaneously using induction on $k=\elll(w),$ starting at $k=0.$
\\

If $\elll(w)=0,$ then $w=e.$ As $\pi_{e}=\epsilon_{q},$ we have $\Bb_{e}=\mathbb{C}.$ By the definition of a $q$-independent commutative $*$-homomorphism $\chi^{q}=\zeta\circ \tau_{q},$ and hence $$\chi^{q}(C(\mathrm{G)_{q}})=\zeta\left(\tau_{q}(C(\mathrm{G)_{q}})\right)=\zeta(C(\mathrm T)).$$ So, $(i)$ and $(ii)$ hold in the case of $k=0$ with $\Gamma_{e}^{s,q}=\id_{\mathbb{C}}$. 
\\

Assume now that $(i)$ and $(ii)$ hold for all $v\in W$ of length $\elll(v)<k.$ By Proposition~\ref{prop2} we have a $*$-homomorphism
\begin{equation}\label{etas}
\partial_{1}^{q}:=\eta_{w}^{q}\circ p_{w}:\Bb_{w}^{q}\longrightarrow \Bb_{w}^{q}/\K_{w}\longrightarrow \prod_{v\lhd w}\Bb_{v,\chi_{v}^{w}}^{q}
\end{equation}
such that the image is isomorphic to $\Bb_{w}^{q}/\K_{w}=p_{w}(\Bb_{w}^{q})\subseteq \mathcal{Q}(\Hh_{w}).$ For $v\lhd w,$ we have
\begin{equation}\label{lhd} 
((\eta^{q}_{v}\circ p_{v})\otimes \iota)\circ (\pi^{q}_{v}\boxtimes \chi_{v}^{w})=\underset{\sigma\lhd v}{\prod}(\pi^{q}_{\sigma}\boxtimes (\chi_{\sigma}^{v}\boxtimes \chi_{v}^{w}))\sim \underset{\sigma\lhd v}{\prod}(\pi_{\sigma}\boxtimes \chi_{\gamma})
\end{equation}
where $\sigma\overset{\gamma}{\leadsto} w$ is the path $\sigma\lhd v\lhd w.$ Taking the product over all $v\lhd w$ we get from~\eqref{lhd} a $*$-homomorphism
\begin{equation}\label{haidt}
\partial_{2}^{q}:=\underset{v\lhd w}{\prod}(\eta_{v}^{q}\circ p_{v})\otimes \iota:\underset{v\lhd w}{\prod} \Bb_{v,\chi_{v}^{w}}^{q}\longrightarrow \overset{(2)}{\underset{\sigma\leadsto w}{\prod}}\Bb^{q}_{v,\chi_{\gamma}},
\end{equation}
where the product $\overset{(2)}{\underset{\sigma\leadsto w}{\prod}}$ is indexed over all $\sigma\lhd^{(2)} w$ and all possible paths $\sigma\overset{\gamma}{\leadsto} w.$ It follows from Lemma~\ref{lem5} that the kernel of $\partial_{2}^{q}$ is equal to $\underset{v\lhd w}{\prod}\K_{v}\otimes C(\mathrm T_{v}^{w}).$
If we iterate~\eqref{haidt}, then we get a sequence of $*$-homomorphisms
\begin{equation}\label{seq}
\Bb_{w}^{q}\overset{\partial_{1}^{q}}{\longrightarrow} \prod_{v\lhd w}\Bb_{v,\chi_{v}^{w}}^{q}\overset{\partial_{2}^{q}}{\longrightarrow}  \overset{(2)}{\underset{v\leadsto w}{\prod}}\Bb^{q}_{v,\chi_{\gamma}}\overset{\partial_{3}^{q}}{\longrightarrow}\dots\overset{\partial_{k-1}^{q}}{\longrightarrow} \overset{(k-1)}{\underset{v\leadsto w}{\prod}}\Bb_{v,\chi_{\gamma}}^{q}\overset{\partial_{k}^{q}}{\longrightarrow}  \underset{e\leadsto w}{\prod} C(\mathrm T_{\gamma}),
\end{equation}
where the product $ \overset{(i)}{\underset{v\leadsto w}{\prod}}$ ranges over all elements $v\in W$ such that $v\lhd^{(i)}w$ and over all possible paths $v\overset{\gamma}{\leadsto}w.$ In the last product, $e\in W$ is the identity element and we suppress the upper index $(k)$ as it is unnecessary in this case. In general, when we have a fixed element $v\in W$ such that $v\leq w,$ then $v\leadsto w$ denotes the set of all possible paths $v\overset{\gamma}{\leadsto} w.$ As an example, for $v\in W,$ we write $\underset{v\leadsto w}{\prod}\Bb_{v,\chi_{\gamma}}$ to mean that the product ranges over all possible paths $v\overset{\gamma}{\leadsto} w.$ Similarly, we write $\underset{v\lhd^{(i)} w}{\prod}$ to mean that the product is over all $v\in W$ such that $v\lhd^{(i)}w.$ Similar notations will also be used for direct sums, etc.
Clearly, by Lemma~\ref{lem5}, for every $i=1,\dots,k,$ we have
\begin{equation}\label{kalinka}\ker\partial_{i+1}^{q}= \overset{(i)}{\underset{v\leadsto w}{\prod}}\K_{v}\otimes C(\mathrm T_{\gamma}).\end{equation} Moreover, the commutative $C^{*}$-algebra $\underset{e\leadsto w}{\prod} C(\mathrm T_{\gamma})$ does not depend on $q.$ For $i=1,\dots,k,$ we let
\begin{equation*}
\partial_{i}^{q}\circ\dots\circ\partial_{1}^{q}=:\Psi_{i}^{q}:\Bb_{w}^{q}\longrightarrow   \overset{(i)}{\underset{v\leadsto w}{\prod}}\Bb^{q}_{v,\chi_{\gamma}}
\end{equation*}
\begin{equation*}
\partial_{k}^{q}\circ\dots\circ\partial_{1}^{q}=:\Psi_{k}^{q}:\Bb_{w}^{q}\longrightarrow   \underset{e\leadsto w}{\prod}C(\mathrm T_{\gamma})
\end{equation*}
be the composition of $*$-homomorphisms in~\eqref{seq}. By iteration of~\eqref{isomorph} and~\eqref{sumset}, we have for any $a\in\mathbb{C}[\mathrm{G]_{q}}$
\begin{equation}\label{comm}
\begin{array}{cc}
(\Psi_{i}^{q}\circ \pi_{w}^{q})(a)= \overset{(i)}{\underset{v\leadsto w}{\sumr}}(\pi_{v}^{q}\boxtimes\chi_{\gamma})(a), & i=1,\dots, k.
\end{array}
\end{equation}
By induction, for all $v\in W$ such that $l(v)<k,$ we have a $*$-isomorphism 
$$
\Gamma_{v}^{s,q}:\Bb_{v}^{q}\longrightarrow \Bb_{v}^{s}
$$
such that $\Gamma_{v}^{s,q}(\K_{v})=\K_{v}$ and the following diagram is commutative
$$
\begin{xy}\xymatrixcolsep{7pc}\xymatrixrowsep{3pc}\xymatrix{
B_{v}^{q}\ar[r]^*{\Gamma_{v}^{s,q}}\ar[d]_*{\eta_{v}^{q}\circ p_{v}}& B_{v}^{s}\ar[d]^*{\eta_{v}^{s}\circ p_{v}}\\
\underset{\sigma\lhd v}{\prod}B_{\sigma,\chi_{\sigma}^{v}}^{q} \ar[r]_*{\underset{\sigma\lhd v}{\prod}(\Gamma_{\sigma}^{s,q}\otimes \iota)} & \underset{\sigma\lhd w}{\prod} B_{\sigma,\chi_{\sigma}^{v}}^{s}
}\end{xy}
$$
It follows that for every $i=1,\dots,k-1,$ we have $*$-isomorphisms 
\begin{equation}\label{prodiso}
\overset{(i)}{\underset{v\leadsto w}{\prod}}(\Gamma_{v}^{s,q}\otimes \iota): \overset{(i)}{\underset{v\leadsto w}{\prod}}\Bb^{q}_{v,\chi_{\gamma}}\longrightarrow \overset{(i)}{\underset{v\leadsto w}{\prod}}\Bb^{s}_{v,\chi_{\gamma}}
\end{equation}
that maps $ \overset{(i)}{\underset{v\leadsto w}{\prod}}\K_{v}\otimes C(\mathrm T_{\gamma})$ into itself and such that the following diagrams are commutative
\begin{equation}\label{dia1}
\begin{xy}\xymatrixcolsep{5pc}\xymatrixrowsep{3pc}
\xymatrix{
\Bb_{w}^{q}  \ar[dr]_*{\Psi_{i+1}^{q}} \ar[r]^*{\Psi_{i}^{q}} &   \overset{(i)}{\underset{v\leadsto w}{\prod}}\Bb^{q}_{v,\chi_{\gamma}} \ar[d]_*{\partial_{i+1}^{q}}  \ar[r]^*{ \overset{(i)}{\underset{v\leadsto w}{\prod}}(\Gamma_{v}^{s,q}\otimes \iota)} &  \overset{(i)}{\underset{v\leadsto w}{\prod}}\Bb^{s}_{v,\chi_{\gamma}}  \ar[d]_*{\partial_{i+1}^{s}}  &\Bb_{w}^{s} \ar[l]_*{\Psi_{i}^{s}} \ar[dl]^*{\Psi_{i+1}^{s}}\\
& \overset{(i+1)}{\underset{v\leadsto w}{\prod}}\Bb^{q}_{v,\chi_{\gamma}}\ar[r]_*{   \overset{(i+1)}{\underset{v\leadsto w}{\prod}}(\Gamma_{v}^{s,q}\otimes \iota)}  &  \overset{(i+1)}{\underset{v\leadsto w}{\prod}}\Bb^{s}_{v,\chi_{\gamma}}&
}\end{xy}
\end{equation}
\begin{equation}\label{dia2}
\begin{xy}\xymatrixcolsep{5pc}\xymatrixrowsep{3pc}\xymatrix{
\Bb_{w}^{q}  \ar[r]^*{\Psi_{k-1}^{q}}  \ar[dr]_*{\Psi_{k}^{q}}& \overset{(k-1)}{\underset{v\leadsto w}{\prod}}\Bb^{q}_{v,\chi_{\gamma}}  \ar[d]_*{\partial_{k}^{q}} \ar[r]^*{ \overset{(k-1)}{\underset{v\leadsto w}{\prod}}(\Gamma_{v}^{s,q}\otimes \iota)}   & \overset{(k-1)}{\underset{v\leadsto w}{\prod}}\Bb^{s}_{v,\chi_{\gamma}}  \ar[d]_*{\partial_{k}^{s}} &\Bb_{w}^{s} \ar[l]_*{\Psi_{k-1}^{s}} \ar[dl]^*{\Psi_{k}^{s}}\\
& \underset{e\leadsto w}{\prod}C(\mathrm T_{\gamma}) \ar[r]_*{\id}  & \underset{e\leadsto w}{\prod}C(\mathrm T_{\gamma})  &.
}\end{xy}
\end{equation}
 The idea is now to show that $ \overset{(i)}{\underset{v\leadsto w}{\prod}}(\Gamma_{v}^{s,q}\otimes \iota)$ restricts to a $*$-isomorphism between $\Psi_{i}^{q}(\Bb_{w}^{q})$ and $\Psi_{i}^{s}(\Bb_{w}^{s})$ for $i=1,\dots,k.$ We prove this by 'climbing the ladder'~\eqref{dia1}, using induction on $i,$ starting at $i=k$  (i.e the case~\eqref{dia2}), and then we count down to $i=1$. 

The statement is clear at $k$, since by the $q$-independence of $\chi_{\gamma}$ and the fact that $\tau_{q}(C(\mathrm{G)_{q}})=C(\mathrm T)=\tau_{s}(C(\mathrm{G)_{s}}),$ we have $$\Psi_{k}^{q}(\Bb_{w}^{q})=(\underset{e\leadsto w}{\sumr}\chi_{\gamma})(C(\mathrm{G)_{q}})=(\underset{e\leadsto w}{\sumr}\chi_{\gamma})(C(\mathrm{G)_{s}})=\Psi_{k}^{s}(\Bb_{w}^{s}).$$
Assume now that the statement holds for $i+1.$ Consider $x\in \Psi_{i}^{q}(\Bb_{w}^{q}).$ Then $$\partial_{i+1}^{q}(x)\in \Psi_{i+1}^{q}(\Bb_{w}^{q}),$$ and hence by induction
$$
 \left(\overset{(i+1)}{\underset{v\leadsto w}{\prod}}(\Gamma_{v}^{s,q}\otimes \iota)\right)(\partial_{i+1}^{q}(x))\in \Psi_{k+1}^{s}(\Bb_{w}^{s}).
$$
But by the commutivity of the diagrams~\eqref{dia1}-\eqref{dia2}, this element is also equal to 
$$
\partial_{i+1}^{s}\left(\overset{(i)}{\underset{v\leadsto w}{\prod}}(\Gamma_{v}^{s,q}\otimes \iota) (x)\right)\in \Psi_{k+1}^{s}(\Bb_{w}^{s})
$$
from which it follows, by~\eqref{kalinka}, that
\begin{equation}\label{katyusha}
\overset{(i)}{\underset{v\leadsto w}{\prod}}(\Gamma_{v}^{s,q}\otimes \iota) (x)\in \Psi_{i}^{s}(\Bb_{w}^{s})+\overset{(i)}{\underset{v\leadsto w}{\prod}}\K_{v}\otimes C(\mathrm T_{\gamma})
\end{equation}
and thus
\begin{equation}\label{xelement}\begin{array}{cccc}\overset{(i)}{\underset{v\leadsto w}{\prod}}(\Gamma_{v}^{s,q}\otimes \iota) (x)=y+c,&y\in \Psi_{i}^{s}(\Bb_{w}^{s}),& c\in  \overset{(i)}{\underset{v\leadsto w}{\prod}}\K_{v}\otimes C(\mathrm T_{\gamma}).\end{array}\end{equation} We show that actually $c\in \Psi_{i}^{s}(\Bb_{w}^{s}).$ For a fixed $v\in W$ such that $v\lhd^{(i)}w,$ we can embed
\begin{equation}\label{embed}
\mathcal{B}(\Hh_{v})\otimes C(\mathrm T_{v}^{w})\subseteq \underset{v\leadsto w}{\prod}\mathcal{B}(\Hh_{v})\otimes C(\mathrm T_{\gamma})
\end{equation}
via the injective $*$-homomorphisms $C(\mathrm T_{v}^{w})\to \underset{v\leadsto w}{\prod}C(\mathrm T_{\gamma})$ coming from the inclusions $\mathrm T_{\gamma}\subseteq \mathrm T_{v}^{w}$ and, by the definition of $\mathrm T_{v}^{w}$, that $\mathrm T_{v}^{w}=\underset{v\leadsto w}{\cup} \mathrm T_{\gamma}.$ Thus the embedding~\eqref{embed} is on simple tensors given by $$\begin{array}{ccc}x\otimes f\mapsto\underset{v\leadsto w}{\prod}(x\otimes f|_{\mathrm T_{\gamma}}), & x\in \mathcal{B}(\Hh_{v}), & f\in C(\mathrm T_{v}^{w}),\end{array}$$
where $f|_{\mathrm T_{\gamma}}$ denoted the restriction of $f\in C(\mathrm T_{v}^{w})$ to the subset $\mathrm T_{\gamma}\subseteq \mathrm T_{v}^{w}.$ Moreover, we have the embedding
\begin{equation}\label{embedcompact}
\K_{v}\otimes C(\mathrm T_{v}^{w})\subseteq\underset{v\leadsto w}{\prod}\mathcal{B}(\Hh_{v})\otimes C(\mathrm T_{\gamma})
\end{equation}
coming from~\eqref{embed}. Using this embedding, we clearly have, for fixed $v\lhd^{(i)}w,$ that  
\begin{equation}\label{embedrange}
\underset{v\leadsto w}{\sumr}(\pi_{v}^{q}\boxtimes \chi_{\gamma}):\mathbb{C}[\mathrm{G]_{q}}\longrightarrow \mathcal{B}(\Hh_{v})\otimes C(\mathrm T_{v}^{w})\subseteq \underset{v\leadsto w}{\prod}\mathcal{B}(\Hh_{v})\otimes C(\mathrm T_{\gamma})
\end{equation}
and that, as in~\eqref{sumset}, we can identify $\underset{v\leadsto w}{\sumr}(\pi_{v}^{q}\boxtimes\chi_{\gamma})\sim\pi_{v}^{q}\boxtimes \chi_{v}^{w}.$ It then follows from Lemma~\ref{lem5}, that under the embeddings~\eqref{embed} and~\eqref{embedcompact} we have
\begin{equation}\label{subsets}
\K_{v}\otimes C(\mathrm T_{v}^{w})\subseteq \overline{\underset{v\leadsto w}{\sumr}(\pi_{v}^{q}\boxtimes \chi_{\gamma})(\mathbb{C}[\mathrm{G]_{q}})}\subseteq \mathcal{B}(\Hh_{v})\otimes C(\mathrm T_{v}^{w})\subseteq \underset{v\leadsto w}{\prod}\mathcal{B}(\Hh_{v})\otimes C(\mathrm T_{\gamma}).
\end{equation}
Moreover, note that the left-hand sides~\eqref{embed} and~\eqref{embedcompact} are clearly invariant under the homomorphism $\underset{v\leadsto w}{\prod}\Gamma^{s,q}_{v}\otimes \iota.$  
We can now use Proposition~\ref{prop3} and~\eqref{comm} to see that if we take the product of~\eqref{subsets}, ranging over all $v\lhd^{(i)}w,$ then
\begin{equation}
\underset{v\lhd^{(i)}w}{\prod}\K_{v}\otimes C(\mathrm T_{v}^{w})\subseteq \Psi_{i}^{q}(\Bb^{q}_{w})\subseteq \underset{v\lhd^{(i)}w}{\prod}\mathcal{B}(\Hh_{v})\otimes C(\mathrm T_{v}^{w})\subseteq \overset{(i)}{\underset{v\leadsto w}{\prod}}\mathcal{B}(\Hh_{v})\otimes C(\mathrm T_{\gamma}).
\end{equation}
As $\overset{(i)}{\underset{v\leadsto w}{\prod}}(\Gamma_{v}^{s,q}\otimes \iota)$ clearly fixes the two sub-algebras on either side of $\Psi_{i}^{q}(\Bb_{w}^{q}),$ it follows from~\eqref{xelement} that
$$
\overset{(i)}{\underset{v\leadsto w}{\prod}}(\Gamma_{v}^{s,q}\otimes \iota) (x)-y=c\in \underset{v\lhd^{(i)}w}{\prod}\K_{v}\otimes C(\mathrm T_{v}^{w})\subseteq \Psi_{i}^{s}(\Bb^{s}_{w}).
$$
From this, it follows that 
\begin{equation}\label{equity}\begin{array}{ccc}\overset{(i)}{\underset{v\leadsto w}{\prod}}(\Gamma_{v}^{s,q}\otimes \iota)( \Psi_{i}^{q}(\Bb_{w}^{q}))\subseteq  \Psi_{i}^{s}(\Bb_{w}^{s}), & q,s\in (0,1).\end{array}\end{equation}
But as $$\begin{array}{cccc} \overset{(i)}{\underset{v\leadsto w}{\prod}}(\Gamma_{v}^{s,q}\otimes \iota)\circ \overset{(i)}{\underset{v\leadsto w}{\prod}}(\Gamma_{v}^{q,s}\otimes \iota)=\id, & q,s\in (0,1)\end{array}$$ we must have equality in~\eqref{equity}.
\\

Thus, we have an isomorphism $$\begin{array}{ccc}\Bb_{w}^{q}/\K_{w}\cong\Bb_{w}^{s}/\K_{w},  & q,s\in (0,1)\end{array}$$  via the $*$-isomorphism 
\begin{equation}\label{tiso}\mathcal{L}^{s,q}:=(\eta_{w}^{s})^{-1}\circ \left(\underset{v\lhd w}{\prod}(\Gamma_{v}^{s,q}\otimes \iota)\right)\circ \eta_{w}^{q}.\end{equation}
However, to be able to use \fact~to conclude that the $C^{*}$-algebras $\Bb_{w}^{q},$ $q\in (0,1)$ are all isomorphic, we must also show that $\mathcal{L}^{s,q}$ are continuous in the point-norm topology, i.e. that for a fixed $q\in (0,1)$ and an element $y\in \Bb_{w}^{q}/\K_{w},$ we have a continuous function
\begin{equation}\label{continuous}
\begin{array}{ccc}
s\in(0,1)\to \mathcal{Q}(\Hh_{w}), & s\mapsto \mathcal{L}^{s,q}(y).
\end{array}
\end{equation}
By a classical approximation argument, it is enough to prove this for the dense $*$-subalgebra $(p_{w}\circ\pi_{w}^{q})(\mathbb{C}[\mathrm{G]_{q}}).$
By Lemma~\ref{field} we have invertible coalgebra morphisms $\theta^{q}:\mathbb{C}[\mathrm{G}]\to \mathbb{C}[\mathrm{G]_{q}}$ such that for fixed $f\in \mathbb{C}[\mathrm{G}],$ the function $q\in (0,1)\mapsto \pi_{w}^{q}(\theta^{q}(f))\in \mathcal{B}(\Hh_{w})$ is continuous. Thus the function $$q\in(0,1)\mapsto (p_{w}\circ \pi_{w}^{q})(\theta^{q}(f))\in \mathcal{B}(\Hh_{w})/\K_{w}=\mathcal{Q}(\Hh_{w})$$ is also continuous.  Let us write $$F^{q}:=(p_{w}\circ\pi_{w}^{q})(\theta^{q}(f))\in \mathcal{Q}(\Hh_{w}).$$ By induction, the function 
$$
s\in(0,1)\mapsto \left(\underset{v\lhd w}{\prod}(\Gamma_{v}^{s,q}\otimes \iota)\right)(\eta_{w}^{q}(F^{q}))
$$
is continuous and $\left(\underset{v\lhd w}{\prod}(\Gamma_{v}^{q,q}\otimes \iota)\right)(\eta_{w}^{q}(F^{q}))=\eta_{w}^{q}(F^{q})$. Notice that by the definition of $\eta^{q}_{w}$ and~\eqref{factors}, we have $$\eta_{w}^{q}(F^{q})=\sumrep{}(\pi_{v}^{q}\boxtimes \chi_{v}^{w})(\theta^{q}(f))$$ and thus by Lemma~\ref{field}, the function
$$
q\in (0,1)\mapsto \eta_{w}^{q}(F^{q})\in \prod_{v\lhd w}\mathcal{B}(\Hh_{v})\otimes C(\mathrm T_{v}^{w})
$$
is continuous. It follows that for all $\epsilon>0$ we have
$$
\|\left(\underset{v\lhd w}{\prod}(\Gamma_{v}^{s,q}\otimes \iota)\right)(\eta_{w}^{q}(F^{q}))- (\eta_{w}^{s}(F^{s})) \|\leq 
$$
$$
\|\left(\underset{v\lhd w}{\prod}(\Gamma_{v}^{s,q}\otimes \iota)\right)(\eta_{w}^{q}(F^{q}))-\eta_{w}^{q}(F^{q}) \|+\| \eta_{w}^{q}(F^{q})- \eta_{w}^{s}(F^{s}) \|< \epsilon
$$
for $|s-q|<\delta_{1},$ if $\delta_{1}>0$ is made small enough. If we apply the $*$-isomorphism $(\eta_{w}^{s})^{-1},$ we get
\begin{equation}\label{smaller}
\begin{array}{ccc}
\|\mathcal{L}^{s,q}\left(F^{q}\right)-F^{s}\|<\epsilon, & \text{for $|s-q|<\delta_{1}$}
\end{array}
\end{equation}
and thus it follows that there is a $0<\delta \leq \delta_{1},$ such that
$$
\|\mathcal{L}^{s,q}\left(F^{q}\right)-F^{q}\|\leq
$$
$$
\|\mathcal{L}^{s,q}\left(F^{q}\right)-F^{s}\|+\|F^{s}-F^{q}\|<2\epsilon
$$
when $|s-q|<\delta.$ We can now apply \fact~to get an inner $*$-automorphism $\Gamma_{w}^{s,q}:\mathcal{B}(\Hh_{w})\to\mathcal{B}(\Hh_{w})$ that restricts to a $*$-isomorphism $\Bb_{w}^{q}\to \Bb_{w}^{s},$ that is continuous in the point-set topology. That the diagram~\eqref{arrows} commutes follows from the commutivity of~\eqref{com} and the way $\mathcal{L}^{s,q}$ was defined. Clearly, the compact operators are invariant under $\Gamma_{w}^{s,q}.$
\\

The case $(ii).$ We prove it first for $\tau_{q}:\mathbb{C}[\mathrm{G]_{q}}\to C(\mathrm T)$ (see~\eqref{tauq}). We combine the inclusion $\Bb_{w,\tau_{q}}^{q}\hookrightarrow \Bb_{w}^{q}\otimes C(\mathrm T)$ with the sequence~\eqref{seq} by tensoring all the components with $C(\mathrm T)$ in the following way
$$
\Bb^{q}_{w,\tau_{q}}\hookrightarrow \Bb^{q}_{w}\otimes C(\mathrm T) \overset{\partial_{1}\otimes\iota}{\longrightarrow} \left(\prod_{v\lhd w}\Bb^{q}_{v,\chi_{v}^{w}}\right)\otimes C(\mathrm T)\overset{\partial_{2}\otimes\iota}{\longrightarrow} \left(\overset{(2)}{\underset{v\leadsto w}{\prod}}\Bb^{q}_{v,\chi_{\gamma}}\right)\otimes C(\mathrm T)\overset{\partial_{3}\otimes\iota}{\longrightarrow}\dots
$$
\begin{equation}\label{tensorseq}
\dots\overset{\partial_{k-1}\otimes\iota}{\longrightarrow} \left(\overset{(k-1)}{\underset{v\leadsto w}{\prod}}\Bb^{q}_{v,\chi_{\gamma}}\right)\otimes C(\mathrm T)\overset{\partial_{k}\otimes\iota}{\longrightarrow} \left(\underset{e\leadsto w}{\prod}C(\mathrm T_{\gamma})\right)\otimes C(\mathrm T).
\end{equation}
If we define $\Psi_{i}^{q}$ as before, then similar to~\eqref{comm}, we have
\begin{equation}\label{comm2}
(\Psi_{i}^{q}\otimes \iota)\circ (\pi_{w}^{q}\boxtimes \tau_{q})= \left(\underset{v\leadsto w}{\overset{(i)}{\sumr}}(\pi_{v}^{q}\boxtimes\chi_{\gamma})\right)\boxtimes \tau_{q}.
\end{equation}
We can proceed exactly as before, using the commutative diagrams~\eqref{dia1} and~\eqref{dia2} (now tensored by $C(\mathrm T)$), by induction on $i=1,\dots , k,$ starting at $k.$ Clearly the images in $\left(\underset{e\leadsto w}{\prod}C(\mathrm T_{\gamma})\right)\otimes C(\mathrm T)$ are the same, as the commutative $*$-representation is $q$-independent. Assuming that $$\left(\overset{(i+1)}{\underset{v\leadsto w}{\prod}}(\Gamma_{v}^{s,q}\otimes \iota)\right)\otimes \iota:\left(\Psi^{q}_{i+1}\otimes \iota\right)(\Bb_{w,\tau_{q}}^{q})\longrightarrow\left(\Psi^{s}_{i+1}\otimes \iota\right)(\Bb_{w,\tau_{s}}^{s})$$
is an $*$-isomorphism gives for $x\in \left(\Psi^{q}_{i}\otimes \iota\right)(\Bb_{w,\tau_{q}}^{q}),$ that
\begin{equation}\label{korobeiniki}
\left(\left(\overset{(i)}{\underset{v\leadsto w}{\prod}}(\Gamma_{v}^{s,q}\otimes \iota)\right)\otimes \iota\right)(x)\in \left(\Psi^{s}_{i}\otimes \iota\right)(\Bb_{w,\tau_{s}}^{s})+\left(\overset{(i)}{\underset{v\leadsto w}{\prod}}\K_{v}\otimes C(\mathrm T_{\gamma})\right)\otimes C(\mathrm T).
\end{equation}

The rest of the argument follows in a similar fashion as for $\Bb_{w}^{q}:$ we find an embedding 

\begin{equation}\label{embed2}
\underset{v\lhd^{(i)}w}{\prod}\mathcal{B}(\Hh_{v})\otimes C(\mathrm T)\subseteq \left(\overset{(i)}{\underset{v\leadsto w}{\prod}}\mathcal{B}(\Hh_{v})\otimes C(\mathrm T_{\gamma})\right)\otimes C(\mathrm T)\end{equation}
such that, 
\begin{enumerate}[(i)]
\item this subalgebra is invariant with respect to the map $\left(\overset{(i)}{\underset{v\leadsto w}{\prod}}(\Gamma_{v}^{s,q}\otimes \iota)\right)\otimes\iota,$
 \item we have
\begin{equation}\label{intersec}
\underset{v\lhd^{(i)}w}{\prod}\K_{v}\otimes C(\mathrm T)=\underset{v\lhd^{(i)}w}{\prod}\mathcal{B}(\Hh_{v})\otimes C(\mathrm T)\bigcap \left(\overset{(i)}{\underset{v\leadsto w}{\prod}}\K_{v}\otimes C(\mathrm T_{\gamma})\right)\otimes C(\mathrm T)
\end{equation}
\item and the following inclusions holds
\begin{equation}\label{subsets2}
\underset{v\lhd^{(i)}w}{\prod}\K_{v}\otimes C(\mathrm T)\subseteq\left(\Psi^{s}_{i}\otimes \iota\right)(\Bb_{w,\tau_{s}}^{s}) \subseteq \underset{v\lhd^{(i)}w}{\prod}\mathcal{B}(\Hh_{v})\otimes C(\mathrm T).
\end{equation}
\end{enumerate}
Clearly, this implies $$\left(\left(\overset{(i)}{\underset{v\leadsto w}{\prod}}(\Gamma_{v}^{s,q}\otimes \iota)\right)\otimes \iota\right)(x)\in \left(\Psi^{s}_{i}\otimes \iota\right)(\Bb_{w,\tau_{s}}^{s}).$$
To do this, we use the natural isomorphism
$$
\left(\overset{(i)}{\underset{v\leadsto w}{\prod}}\mathcal{B}(\Hh_{v})\otimes C(\mathrm T_{\gamma})\right)\otimes C(\mathrm T)\cong \overset{(i)}{\underset{v\leadsto w}{\prod}}\mathcal{B}(\Hh_{v})\otimes C(\mathrm T_{\gamma})\otimes C(\mathrm T).
$$
Notice that for every $v\overset{\gamma}{\leadsto} w,$ we have an embedding $C(\mathrm T)\subseteq C(\mathrm T_{\gamma})\otimes C(\mathrm T)$ determined by $C(\mathrm T_{\gamma})\otimes C(\mathrm T)\cong C(\mathrm T_{\gamma}\times T)$ and the multiplication map $\mathrm T_{\gamma}\times \mathrm T\to \mathrm T.$ As $\tau_{q}:\mathbb{C}[\mathrm{G]_{q}}\to \mathbb{C}[\mathrm T]$ is a morphism of Hopf $*$-algebras (hence compatible with the multiplication in $\mathrm T$), it follows that we have a $*$-homomorphism
$$
\pi_{v}^{q}\boxtimes \chi_{\gamma}\boxtimes \tau_{q}:C(\mathrm{G)_{q}}\to \mathcal{B}(\Hh_{v})\otimes C(\mathrm T)\subseteq \mathcal{B}(\Hh_{v})\otimes C(\mathrm T_{\gamma})\otimes C(\mathrm T).
$$
We can then, for fixed $v\lhd^{(i)}w,$ embed diagonally 
$$
\mathcal{B}(\Hh_{v})\otimes C(\mathrm T)\subseteq \underset{v\leadsto w}{\prod}\mathcal{B}(\Hh_{v})\otimes C(\mathrm T_{\gamma})\otimes C(\mathrm T).
$$
By taking the product over all $v\lhd^{(i)}w,$ we get an embedding~\eqref{embed2} such that~\eqref{subsets2} holds (the first inclusion follows from Proposition~\eqref{prop3}). Clearly, we also have~\eqref{intersec}. Thus, it follows that $(\Gamma^{s,q}_{w}\otimes \iota)(\Bb_{w,\tau_{q}}^{q})=\Bb_{w,\tau_{q}}^{s},$ since this is the case $i=1.$
\\

This implies the general case: let $\chi^{q}:C(\mathrm{G)_{q}}\to C(\mathrm X)$ be commutative $q$-independent $*$-homomorphisms and $\zeta:C(\mathrm T)\to C(\mathrm X)$ the $*$-homomorphism such that $\chi^{q}=\zeta\circ \tau_{q}.$ Then $\iota\otimes \zeta$ is a surjective $*$-homomorphism $\Bb^{q}_{w,\tau_{q}}\to \Bb^{q}_{v,\chi^{q}}.$
As $\Gamma_{w}^{s,q}\otimes \iota$ is a $*$-isomorphism $\Bb_{w,\tau_{q}}^{q}\to \Bb_{w,\tau_{s}}^{s},$ we have
$$
(\Gamma_{w}^{s,q}\otimes \iota)(\Bb_{w,\chi^{q}}^{q})=(\Gamma_{w}^{s,q}\otimes \iota)\circ (\iota\otimes \zeta) (\Bb_{w,\tau_{q}}^{q})=(\iota\otimes \zeta)\circ (\Gamma_{w}^{s,q}\otimes \iota) (\Bb_{w,\tau_{q}}^{q})=
$$
$$
=(\iota\otimes \zeta)(\Bb_{w,\tau_{s}}^{s})=\Bb_{w,\chi^{s}}^{s}.
$$
\end{proof}
\begin{cor}
The universal enveloping $C^{*}$-algebras of $\mathbb{C}[\mathrm{G]_{q}}$ are isomorphic for all $q\in (0,1).$ These isomorphisms are equivariant with respect to the right-action of $\mathrm T.$
\end{cor}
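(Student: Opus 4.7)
The plan is to realize the universal $C^{*}$-algebra $C(\mathrm{G})_{q}$ as $\Bb_{w_{0},\tau_{q}}^{q}$ through the $*$-representation $\pi_{w_{0}}^{q}\boxtimes\tau_{q}$, where $w_{0}\in W$ is the longest element, and then to quote Theorem~\ref{main}(ii). The family $\tau_{q}:C(\mathrm{G})_{q}\to C(\mathrm{T})$ is commutative and $q$-independent (its canonical factorization is $\tau_{q}=\id\circ\tau_{q}$), so Theorem~\ref{main}(ii) with $w=w_{0}$ and $\chi^{q}=\tau_{q}$ furnishes a $*$-isomorphism $\Gamma^{s,q}_{w_{0}}\otimes\iota:\Bb_{w_{0},\tau_{q}}^{q}\to\Bb_{w_{0},\tau_{s}}^{s}$ for all $s,q\in(0,1)$. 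Provided each $\pi_{w_{0}}^{q}\boxtimes\tau_{q}$ is faithful, it is automatically isometric, so $C(\mathrm{G})_{q}\cong \Bb_{w_{0},\tau_{q}}^{q}$ and one obtains the desired $*$-isomorphism $\Phi^{s,q}:C(\mathrm{G})_{q}\to C(\mathrm{G})_{s}$ by composing with the corresponding inverses.

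The main step, and the main obstacle, is therefore the faithfulness of $\pi_{w_{0}}^{q}\boxtimes\tau_{q}$. Composing with $\iota\otimes\ev_{t}$, $t\in\mathrm{T}$, recovers the irreducible $\pi_{w_{0}}^{q}\boxtimes\chi_{t}$, so $\ker(\pi_{w_{0}}^{q}\boxtimes\tau_{q})=\bigcap_{t\in\mathrm{T}}\ker(\pi_{w_{0}}^{q}\boxtimes\chi_{t})$. I invoke Theorem~\ref{ntthm} with $\sigma=w_{0}$ and $Y=\mathrm{T}$: the conditions $r\leq w_{0}$ (automatic, as $w_{0}$ is the longest element) and $t\in\overline{\mathrm{T}}\,\mathrm{T}_{r}^{w_{0}}=\mathrm{T}$ (as $\mathrm{T}$ is a group) hold for every $r\in W$ and $t\in\mathrm{T}$. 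Hence the kernel of every irreducible $\pi_{r}^{q}\boxtimes\chi_{t}$ contains $\ker(\pi_{w_{0}}^{q}\boxtimes\tau_{q})$. By Soibelman's theorem these exhaust the irreducible $*$-representations of $C(\mathrm{G})_{q}$, and since the atomic representation of any $C^{*}$-algebra is faithful (pure states separate points by Krein--Milman applied to the state space), the common intersection of all their kernels is trivial. This forces $\ker(\pi_{w_{0}}^{q}\boxtimes\tau_{q})=0$.

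For equivariance under the right $\mathrm{T}$-action $R_{t}=(\iota\otimes\chi_{t})\circ\Delta_{q}$, the fact that $\tau_{q}$ is a Hopf $*$-algebra morphism yields $\tau_{q}\circ R_{t}=R_{t}^{\mathrm{T}}\circ\tau_{q}$, where $R_{t}^{\mathrm{T}}$ denotes right-translation by $t$ on $C(\mathrm{T})$; combined with coassociativity of $\Delta_{q}$ this gives
\begin{equation*}
(\pi_{w_{0}}^{q}\boxtimes\tau_{q})\circ R_{t}=(\iota\otimes R_{t}^{\mathrm{T}})\circ(\pi_{w_{0}}^{q}\boxtimes\tau_{q}).
\end{equation*}
The transported action $\iota\otimes R_{t}^{\mathrm{T}}$ is manifestly $q$-independent, and $\Gamma^{s,q}_{w_{0}}\otimes\iota$ acts as the identity on the $C(\mathrm{T})$-tensor factor, so the two commute. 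Therefore $\Phi^{s,q}$ intertwines the right $\mathrm{T}$-actions on $C(\mathrm{G})_{q}$ and $C(\mathrm{G})_{s}$, which completes the plan.
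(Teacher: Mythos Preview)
Your proof is correct and follows essentially the same route as the paper: identify $C(\mathrm{G})_{q}$ with $\Bb_{w_{0},\tau_{q}}^{q}$ via faithfulness of $\pi_{w_{0}}^{q}\boxtimes\tau_{q}$ (using Theorem~\ref{ntthm} with $\sigma=w_{0}$, $Y=\mathrm{T}$), apply Theorem~\ref{main}(ii), and read off equivariance from the tensor form $\Gamma^{s,q}_{w_{0}}\otimes\iota$ of the isomorphism together with the fact that the right $\mathrm{T}$-action transports to $\iota\otimes R_{t}^{\mathrm{T}}$ on the second tensor leg. Your write-up is in fact somewhat more explicit than the paper's about why faithfulness holds (spelling out the kernel intersection and the appeal to Soibelman's classification), but the argument is the same.
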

\begin{proof}
If $\omega\in W$ is the unique element of longest length in the Weyl group and $\tau_{q}:C(\mathrm{G)_{q}}\to C(\mathrm T)$ the commutative $q$-independent $*$-homomorphism coming from the embedding of the maximal torus $\mathrm T\subseteq \mathrm G,$ then it follows from Theorem~\ref{ntthm} that any irreducible $*$-representation of $C(\mathrm{G)_{q}}$ must factor through $\pi_{\omega}^{q}\boxtimes \tau_{q}.$ Thus $\Bb_{\omega,\tau_{q}}^{q}\cong C(\mathrm{G)_{q}}.$ The q-independence follows from Theorem~\ref{main}(ii). For $x\in C(\mathrm G)_{q}$ and $t\in \mathrm T$ we have 
$$
(\pi^{q}_{\omega}\boxtimes \tau_{q})(R_{t}(x))=
$$
$$
=(\iota\otimes\iota\otimes \ev_{t})\circ(\iota\otimes \Delta_{\mathrm T})((\pi_{\omega}^{q}\boxtimes\tau_{q})(x))
$$
and thus equivariance with respect to the right-action follows as the isomorphism $\Bb_{\omega,\tau_{q}}^{q}\to \Bb_{\omega,\tau_{s}}^{s}$ is of the form $\Gamma^{s,q}_{\omega}\otimes \iota.$

\end{proof}

\end{document}